\newtheorem{theorem}{Theorem}[section]
\newtheorem{lemma}[theorem]{Lemma}
\newtheorem{proposition}[theorem]{Proposition}
\newtheorem{corollary}[theorem]{Corollary}
\theoremstyle{definition}
\newtheorem{definition}[theorem]{Definition}
\newtheorem{example}[theorem]{Example}
\newtheorem{remark}[theorem]{Remark}
\newtheorem{remarks}[theorem]{Remarks}
\newtheorem{conventions}[theorem]{Convention}
\numberwithin{equation}{theorem}
\def\wt{\text{\rm wt}\,  }
\def\wt{\text{\rm wt}\,  }
\def\mod{\text{\rm mod}\,  }
\def\mfq{{\mathfrak q(n)}}
\def\al{\alpha}
\def\de{\delta}
\def\la{\lambda}
\def\La{\Lambda}
\def\Om{\Omega}
\def \bZ{\mathbb Z}
\def \bN{\mathbb N}
\def\span{{\text{\rm span}}}
\def\wt{{\text{\rm wt}}}
\def\diag{{\text{\rm diag}}}
\def\sbi{{s_{\bar i}}}
\def\sbii{{s_{\overline{ i+1}}}}
\def\sZ{{\mathcal Z}}
\def\sE{{\mathcal E}}
\def\sF{{\mathcal F}}
\def\la{{\lambda}}
\def\al{{\alpha}}
\def\sR{{\mathcal R}}
\def\sX{{\mathcal X}}
\def\sK{{\mathcal K}}
\def\sTn{{\mathscr T_\up(n)}}
\def\sTnr{{\mathscr T_\up(n,r)}}
\def\sVn{{\mathscr V_\up(n)}}
\def\scrV{{\mathscr V}}
\def\wsTn{{\widehat{\mathscr T}_\up(n)}}
\def\sfK{{\mathsf K}}
\def\sfE{{\mathsf E}}
\def\sfF{{\mathsf F}}
\def\bfr{{\mathbf r}}
\def\bfc{{\mathbf c}}
\newcommand{\ul}{\underline}
\newcommand{\ol}{\overline}
\def\fks{{\mathfrak{s}}}
\def\fkm{{\mathfrak m}}
\def\fkb{{\mathfrak b}}
\def\fkc{{\mathfrak c}}
\def\fka{{\mathfrak a}}
\def\fkF{{\mathfrak F}}
\def\fkE{{\mathfrak E}}
\def\fkB{{\mathfrak B}}
\def\fkL{{\mathfrak L}}
\def\fkM{{\mathfrak M}}
\def\sck{{\textsc{k}}}
\def\sce{{\textsc{e}}}
\def\scf{{\textsc{f}}}
\def\ro{{\rm ro}}
\def\co{{\rm co}}
\newcommand{\Z}{\mathbb{Z}}
\newcommand{\0}{\bar 0}
\newcommand{\ov}{\overline}
\newcommand{\bi}{{\bar i}}
\newcommand{\bj}{{\bar j}}
\newcommand{\ds}{\displaystyle}
\newcommand{\Q}{\mathbb Q}
\newcommand{\qUq}{{U_\up(\mathfrak q_n)}}
\newcommand{\qUqZ}{{U_{\up,\sZ}(\mathfrak q_n)}}
\newcommand{\N}{\mathbb N}
\newcommand{\mc}{\mathcal}
\def\sce{{\textsc e}}
\def\scf{{\textsc f}}
\def\sck{{\textsc k}}
\def\k{{\mathsf K}}
\def\e{{\mathsf E}}
\def\f{{\mathsf F}}
\def\q{{{\upsilon}}}
\def\v{{\upsilon}}
\def\up{{\upsilon}}
\def\ups{{{\upsilon}}}
\def\i{{h}}
\def\SW{\mathscr{A}_\up(n)}
\def\SWr{\mathscr{A}_\up(n,r)}
\def\noma#1{X^{#1}}
\def\nomab#1{X^{[ #1 ]}}
\def\bfa{{{\bf a}}}
\def\bfb{{{\bf b}}}
\def\bfaa{{{\bf c}}}
\def\bfe{{{\bf e}}}
\def\bfj{{{\bf j}}}
\def \pari{\partial_i}
\def \parj{\partial_j}
\def \chii{\chi_i}
\def \chij{\chi_j}
\def \dej{\delta_j}
\def \dei{\delta_i}
\def \parbi{\partial_{\bar i}}
\def \parbj{\partial_{\bar j}}
\def \chibi{\chi_{\bar i}}
\def \chibj{\chi_{\bar j}}
\def \debj{\delta_{\bar j}}
\def \debi{\delta_{\bar i}}
\def \ebi{\mathcal{E}_{\bar i}}
\def \fbi{\mathcal{F}_{\bar i}}
\def \kbi{\mathcal{K}_{\bar i}}
\def \kbj{\mathcal{K}_{\bar j}}
\def \cei{\mathcal{E}_i}
\def \cki{\mathcal{K}_i}
\def \parb{\partial}
\def \upv{{\upsilon}}
\def\scrA{{\mathscr A}}
\def\scrL{{\mathscr L}}
\def \ceii{\mathcal{E}_{{i+1}}}
\def \ckii{\mathcal{K}_{{i+1}}}
\def \kbi{\mathcal{K}_{\bar i}}
\def \kbii{\mathcal{K}_{\ov{i+1}}}
\def \kbj{\mathcal{K}_{\bar j}}
\def \ebii{\mathcal{E}_{\ov{ i+1}}}
\def\sqA{{{}^\square\!A}}
\def\OE{{\mathsf E}}
\def\OG{{\mathsf G}}
\newcommand{\Xije}{{\OE_{i,j}}}
\newcommand{\Xijo}{{\overline{\OE}_{i,j}}}
\def\bfl{{\mathbf 0}}
\def\co{{\text{\rm co}}}
\def\ro{{\text{\rm ro}}}
\def\sfX{{\mathsf X}}
\def\sfY{{\mathsf Y}}
\begin{document}
\title{Quantum  queer supergroups 
 via $\up$-differential operators
        }
\date{}

\author{Jie Du, Yanan Lin  and Zhongguo Zhou}

\address{J. D., School of Mathematics and Statistics,
University of New South Wales, Sydney NSW 2052, Australia}
\email{j.du@unsw.edu.au}
\address{Y. L., School of Mathematical Sciences, Xiamen University, Xiamen 361005, China}
\email{ynlin@xmu.edu.cn}
\address{Z. Z., College of Science, Hohai University, Nanjing, China}
\email{zhgzhou@hhu.edu.cn}
\keywords{quantum queer supergroup, quantum differential operator, supermodule, queer $q$-Schur superalgebra, regular representation}


\subjclass[2010]{Primary: 17B10, 17B37, 17B60, 20G43; Secondary: 81R50, 16S32, 16T20.}
\thanks{The work was partially supported by the UNSW Science FRG and the Natural Science Foundation of
China ($\#$11871404)}

\maketitle
\begin{abstract}
By using certain quantum differential operators, we construct a super representation for the quantum queer supergroup  $\qUq$. The underlying space of this representation is a deformed polynomial superalgebra in $2n^2$ variables whose homogeneous components can be used as the underlying spaces of queer $q$-Schur superalgebras. We then extend the representation to its formal power series algebra which contains a (super) submodule isomorphic to the regular representation  of $\qUq$.  A monomial basis $\mathfrak M$ for $\qUq$ plays  a key role in proving the isomorphism. In this way, we may present the quantum queer supergroup $\qUq$ by another new  basis $\mathfrak L$ together with some explicit multiplication formulas by the generators. As an application, similar presentations are obtained for queer $q$-Schur superalgebras via the above mentioned homogeneous components.

The existence of the bases $\mathfrak{M}$ and $\mathfrak{L}$ and the new presentation show that the seminal construction of quantum $\mathfrak{gl}_n$ established by Beilinson--Lusztig--MacPherson thirty years ago extends to this ``queer''  quantum supergroup via a completely different approach.
\end{abstract}
\maketitle



\tableofcontents

\section{Introduction}
The general linear Lie algebra $\mathfrak{gl}_n$ has two super analogues: the general linear Lie superalgebra $\mathfrak{gl}_{m|n}$ and the queer Lie superalgebra $\mathfrak q_n$. 
Generally speaking, there is a degree of resemblance between the Lie superalgebra $\mathfrak{gl}_{m|n}$ and the Lie algebra $\mathfrak{gl}_{n}$. But the queer Lie superalgebra $\mathfrak q_n$ differs drastically from the rest of the entire family of classical Lie superalgebras. For example, this is the only classical Lie superalgebra whose Cartan subsuperalgebra has a nontrivial odd part, which results in very interesting phenomenon on 
the highest weight space of a finite dimensional irreducible supermodule (see, e.g., \cite[\S1.5.4]{CW}). Another example is that the celebrated Schur--Weyl duality, which can be easily established for $\mathfrak{gl}_{m|n}$, becomes the Schur--Weyl--Sergeev duality, which involves the Sergeev superalgebra or the Hecke--Clifford superalgebra in the quantum case (see \cite{Se, Ol, DW1}). 

It is known that the representation theory of Lie superalgebras is much more complicated than the corresponding theory of Lie algebras. The study of  the representations of the queer quantum supergroup $\qUq$ faces even more challenges.
In the series of papers \cite{GJKK, GJKK2, GJKKK, GJKKK2}, D. Grantcharov et al.
investigated the representation theory of the quantum superalgebra $\qUq$ in the quantum characteristic 0 case (i.e., the non-root-of-unity case) and discovered that the category of finite dimensional $\qUq$-supermodules is not semisimple. However, the full subcategory $\mathcal O_{\text{int}}^{\geq0}$ of the so-called {\it tensor supermodules} (or polynomial super representations in the sense of \cite{DLZ}) is semisimple. By overcoming several challenges, they further established the crystal basis theory for the supermodules in this semisimple category. These latest developments may be regarded as new contributions to the combinatorial representation theory.

To explore the representation theory of $\qUq$ in positive quantum characteristics, especially the category of polynomial (or tensor) supermodules at a root of unity, one has to face new challenges. First, the quantum queer supergroup $\qUq$ will be replaced by the corresponding (super) hyperalgebra through a Lusztig type form defined by a PBW type basis. A better understanding of the structure of the hyperalgebra often requires a new presentation for $\qUq$.
Thus, an integral monomial basis and a triangular relation between the bases will be crucial to such a new construction. Note that the existence of such a triangular relation plays a key role in constructing a global crystal (or canonical) basis for the $\pm$-parts. Second, lifting the quantum Schur--Weyl--Sergeev duality to the integral level is the key to link the polynomial super representations of $\qUq$ with those of the Hecke-Clifford superalgebras via the queer $q$-Schur superalgebras (see \cite{DW1, DW2} and cf. \cite{DZ}).

In this paper, we will tackle the first challenges mentioned above. A model one could follow is the beautiful work by Beilinson--Lusztig--MacPherson (BLM) \cite{BLM} in which they discovered a new realisation for quantum $\mathfrak{gl}_n$ via finite dimensional $q$-Schur algebras. 
Here a key step is the discovery of a new basis arising from certain spanning sets for $q$-Schur algebras together with a triangularly related integral monomial basis.
The BLM work has been generalised by Gu and the first author \cite{DG} to the quantum supergroup $U_\up(\mathfrak{gl}_{m|n})$. However, due to the involvement of a Hecke--Clifford superalgebra, attempts for $\qUq$ via queer $q$-Schur superalgebras  were not successful. 

Fortunately, a new approach via quantum differential operators has recently been developed in a pilot study \cite{DZ}, where a new realisation for $U_\up(\mathfrak{gl}_{m|n})$ is obtained without using $q$-Schur superalgebras. The new approach is to construct directly the regular representation of the quantum supergroup on a certain polynomial superalgebra whose homogeneous components resemble $q$-Schur superalgebras. Thanks to \cite{DW2} by Wan and the first author,  where a basis for a queer $q$-Schur superalgebra is constructed in terms of a matrix labelling set, we are able to introduce a queer (or deformed) polynomial superalgebra on which $\qUq$ acts via certain quantum differential operators.
In this way, the regular representation for $\qUq$ is constructed via two new bases---a monomial basis $\fkM$ and a BLM type basis\footnote{A related PBW type basis will be denoted by $\fkB$ in \S2. Thus, the three bases $\mathfrak{B, L, M}$ are named after the BLM work.}  $\fkL$---and a new realisation is achieved for $\qUq$ and for queer $q$-Schur superalgebra as well. 

The new realisations for both $\qUq$ and queer $q$-Schur algebras are useful in addressing the second challenge mentioned above. In a forthcoming paper, we will establish a partial integral Schur--Weyl--Sergeev duality and the associated polynomial representation theory in positive quantum characteristics (cf. \cite{DLZ}).

We organise the paper as follows. Section 2 is preliminary. By using a special ordering on root vectors, we define in Section 3 a PBW type basis $\fkB$ for $\qUq$ (and for its Lusztig type form $\qUqZ$ as well). We order the basis by an order relation on the matrix index set and  revisit  certain commutation formulas for root vectors with respect to the order relation. A monomial basis $\fkM$
is constructed in Section 4. The key to the proof is a triangular relation between the monomial basis and the PBW basis. 

The second half of the paper is devoted to the construction of the regular representation of $\qUq$.

 We introduce a deformed polynomial superalgebra $\SW$ in Section 5. Several linear maps on $\SW$, including $n$ (partial) $\up$-differential operators, are defined and their relations are discussed so that a $\qUq$-supermodule structure on $\SW$ is built (Theorem \ref{supalg}). This is the most technical part of the paper. Using the Hopf algebra structure on $\qUq$, we then extend in Section 6 the action of $\qUq$ on $\SW$ to the $n$-fold tensor product $\sTn$ by some explicit action formulas (Lemma \ref{genmul}). Note that every homogeneous component $\sTnr$ of $\sTn$ is a finite dimensional weight supermodule, which will be proved in Section 11 to be isomorphic to the regular representation of the quantum queer  Schur superalgebra (Theorem \ref{reaque}). In Section 7, we further extend the $\qUq$-action to the formal power series analog $\wsTn$ of $\sTn$. This is a $\qUq$-module without an inherited supermodule structure. We then extract a subspace $\sVn$ from $\wsTn$ which has an inherited superspace structure,  and display certain action formulas of generators (Theorem \ref{mulfor}) on the BLM type basis $\fkL$ of $\sVn$. 
 We prove in Section 8 that $\sVn$ is a $\qUq$-supermodule.
 
By analysing the action formulas, we use the order relation on the defining basis for $\sVn$ to determine in Section 9 the leading terms in the action formulas on the basis elements by the divided powers of generators. These are the key to a triangular relation between the monomial basis $\fkM$ and the BLM type basis $\fkL$ discussed in Section 10.  In this way, we prove that $\sVn$ is a cyclic $\qUq$-supermodule isomorphic to the regular representation of $\qUq$. In the last section, we prove that $\sTnr$  is isomorphic to the regular representation of the queer $q$-Schur superalgebra
$\mathcal Q_\up(n,r)$. Thus, we obtain new realisations for both $\qUq$ and $\mathcal Q_\up(n,r)$ (Theorems \ref{mthm} and \ref{Qnr}).


\section{The quantum  queer supergroup $\qUq$ and its weight supermodules}

Let $n\geq 2$ be a positive integer. The following definition is taken from \cite[Definition 1.1]{GJKKK} (cf. \cite{DW1}) for the algebra structure and \cite[\S 4]{Ol} or \cite[(1.2)]{GJKKK} for the coalgebra structure.
\begin{definition}\label{presentqUq}
The queer quantum supergroup $\qUq$ is a Hopf superalgebra over $\Q(\q)$ whose unital associative superalgebra is generated
by even generators: $\k^{\pm1}_i, \e_j, \f_j$, and odd generators: $\k_{\bi}, \e_{\bj},\f_{\bj},$ for $1\leq i\leq n, 1\leq j\leq n-1$, subject to the following relations:
\begin{itemize}
 \item[(QQ1)]
 $\k_i\k^{-1}_i=1=\k^{-1}_i\k_i$,~
 $\k_i\k_j=\k_j\k_i,~ \k_i\k_{\bar j}=\k_{\bj}\k_i,$

\noindent $\k_{\bar i}\k_{\bar j}+\k_{\bar j}\k_{\bar i}=\ds2\delta_{ij}\frac{\k_i^2-\k_i^{-2}}{\q^2-\q^{-2}}$ for all $1\leq i,j\leq n$;

\vspace{0.1in}

 \item[(QQ2)]
 $\k_i\e_j=\q^{\de_{i,j}-\de_{i,j+1}}\e_j\k_i,\quad \k_i\f_j=\q^{-(\de_{i,j}-\de_{i,j+1})}\f_j\k_i,$

\noindent  $\k_i\e_{\bar j}=\q^{\de_{i,j}-\de_{i,j+1}}\e_{\bar j}\k_i,\quad \k_i\f_{\bar j}=\q^{-(\de_{i,j}-\de_{i,j+1})}\f_{\bar j}\k_i$,

\noindent
 for all $1\leq i,j\leq n,j\neq n$;

\vspace{0.1in}

 \item[(QQ3)] $\k_{\bi}\e_j=\e_j\k_{\bi}$, $\k_{\bi}\f_j=\f_j\k_{\bi}$, where $i\neq j, j+1$;

 \noindent $\k_{\bj}\e_j-\q\e_j\k_{\bj}=\e_{\bj}\k_j^{-1},\quad \q\k_{\ol{j+1}}\e_{j}-\e_{j}\k_{\ol{j+1}}=-\k^{-1}_{j+1}\e_{\ov{j}},$

 \noindent  $\k_{\bj}\f_j-\q\f_j\k_{\bj}=-\f_{\bj}\k_j,\quad \q\k_{\ol{j+1}}\f_{j}-\f_{j}\k_{\ol{j+1}}=\k_{j+1}\f_{\ov{j}},$




\vspace{0.1in}

 \item[(QQ4)]
 $\e_i\f_j-\f_j\e_i=\ds\delta_{ij}\frac{\k_i\k^{-1}_{i+1}-\k^{-1}_{i}\k_{i+1}}{\q-\q^{-1}}$,




\noindent  $\e_i\f_{\bj}-\f_{\bj}\e_i=\ds\delta_{ij}(\k^{-1}_{i+1}\k_{\bi}-\k_{\ov{i+1}}\k^{-1}_i)$,


\noindent  $\e_{\bi}\f_j-\f_j\e_{\bi}=\delta_{ij}(\k_{i+1}\k_{\bi}-\k_{\ov{i+1}}\k_i)$,  for all $1\leq i,j\leq n-1$;


\vspace{0.1in}

 \item[(QQ5)]
\noindent  $\e_i\e_j=\e_j\e_i$, $\f_i\f_j=\f_j\f_i$
for  $|i-j|>1$,

 \noindent $\e_i\e_{\bar i}=\e_{\bar i}\e_i$, $\f_i\f_{\bar i}=\f_{\bar i}\f_i,$

\noindent $\e_i\e_{i+1}-\q\e_{i+1}\e_i=\e_{\bi}\e_{\ov{i+1}}+\q\e_{\ov{i+1}}\e_{\bi},$

\noindent  $\q\f_{i+1}\f_i-\f_i\f_{i+1}=\f_{\bar i}\f_{\ov{i+1}}+\q\f_{\ov{i+1}}\f_{\bar i}$, where $1\leq i,j<n$;

 \vspace{0.1in}

  \item[(QQ6)]
 $\e_i^2\e_{j}-(\q+\q^{-1})\e_i\e_{j}\e_i+\e_{j}\e^2_i=0$,
 $\f_i^2\f_j-(\q+\q^{-1})\f_i\f_j\f_i+\f_j\f^2_i=0$,

\noindent $\e_i^2\e_{\ov{j}}-(\q+\q^{-1})\e_i\e_{\ov{j}}\e_i+\e_{\ov{j}}\e^2_i=0$,
$\f_i^2\f_{\ov{j}}-(\q+\q^{-1})\f_i\f_{\ov{j}}\f_i+\f_{\ov{j}}\f^2_i=0$,

 \noindent where $|i-j|=1$ and $1\leq i,j<n$.
\end{itemize}
The coalgebra structure on $\qUq$ has a comultiplication $\triangle$  defined  by the rules\footnote{See \cite[p. 199]{jc} for a comparison between this comultiplication and the usual one in the non-super case. 
The image on other odd generators can be found in 
\cite[p. 838]{GJKK} after adjusting notation as in \cite[(5.3)]{DW1}.}:
\begin{equation}\label{comult}
\begin{aligned}
& \Delta(\k_i)=\k_i\otimes \k_i,\quad
\Delta(\e_i)=\e_i\otimes \widetilde{\k}_i^{-1}+1\otimes \e_{i} ,\\
&   \Delta(\f_i)=\f_i\otimes 1+  \widetilde{\k}_i\otimes \f_i ,\quad
\Delta(\k_{\bar 1})=\k_{\bar 1}\otimes \k_{1}+\k_1^{-1}\otimes \k_{\bar 1},
\end{aligned}
\end{equation}
where $\widetilde{\k}_i=\k_i\k_{i+1}^{-1}.$
\end{definition}

\begin{remark}\label{gen rel}
(1) By identifying $\up,\sfK_i,\sfE_j,\sfF_j,\sfK_{\bar i},\sfE_{\bar j},\sfF_{\bar, j}$ with $q,q^{k_i},e_j,f_j,k_{\bar i},e_{\bar j},f_{\bar j}$, respectively, the relations in (QQ1)--(QQ6) are identical with those in \cite[(1.1)]{GJKKK}.

(2) As pointed out at the end of {\cite[Definition 1.1]{GJKKK}}, the algebra $\qUq$ is generated by even generators $\k^{\pm1}_i, \e_j, \f_j$, for $1\leq i\leq n,1\leq j<n$, together with the odd generators $\k_{\bar1}$.\footnote{This can be seen easily as follows. By (QQ3), we see that
$\e_{\bar1}=(\k_{\bar1}\e_1-\q\e_1\k_{\bar1})\k_1$ and $\f_{\bar1}=(-\k_{\bar1}\f_1+\q\f_1\k_{\bar1})\k_1^{-1}.$
Thus, by (QQ4), $\k_{\bar 2}=(\k^{-1}_{2}\k_{\bar1}-\e_1\f_{\bar1}+\f_{\bar 1}\e_1)\k_1$. Then, by (QQ3), $\e_{\bar2},\f_{\bar2}$ are defined. Inductively, we see that the missing generators can all be derived from the given relations.}

(3) The relations form a subset of relations given in \cite{DW1,DW2}; the missing relations can all be derived from the relations above (see \cite[Remark 1.2]{GJKKK2}).
\end{remark}

Recall from \cite[(5.6)]{DW1} that the superalgebra $\qUq$ admits a ring anti-involution $\Om$
given by 
\begin{equation}\label{Om}
\aligned
\Om(\up)&=\up^{-1},\quad\Om(\sfK_i)=\sfK_i^{-1},\quad\Om(\sfK_{\bar i})=\sfK_{\bar i},\\
\Om(\sfE_i)&=\sfF_i,\quad\Om(\sfF_i)=\sfE_i, \quad\Om(\sfE_{\bar i})=\sfF_{\bar i},\quad \Om(\sfF_{\bar i})=\sfE_{\bar i}.
\endaligned
\end{equation}
Recall also from \cite[(5.8) ]{DW1} the (even and odd) root vectors\footnote{We change the notation $X_{i,j}, \overline{X}_{i,j}$ there back to the usual notation $\Xije, \Xijo$ for root vectors to avoid notational confusion with the generators of the queer polynomial superalgebra in \S4.}
 $\Xije$ and $  \Xijo=\mathsf E_{i,\bar j},  i\neq j$, where, for $\varepsilon_{ij}:=1$ if $i<j$ and $-1$ if $i>j$,
 and  $|j-i|>1$,
$$\aligned
\OE_{h,h+1}&=\sfE_h, \OE_{h+1,h}=\sfF_h, \OE_{i,j}=\OE_{i,k}\OE_{k,j}-\up^{\varepsilon_{ij}}\OE_{k,j}\OE_{i,k};\\
\overline{\OE}_{h,h+1}&=\sfE_{\bar h}, \overline{\OE}_{h+1,h}=\sfF_{\bar h},
 \; \ol{\OE}_{i,j}=\begin{cases}\OE_{i,k}\ol{\OE}_{k,j}-\up^{}\ol{\OE}_{k,j}\OE_{i,k},&\text{ if }i<j;\\
\ol{ \OE}_{i,k}\OE_{k,j}-\up^{-1}\OE_{k,j}\ol{\OE}_{i,k},&\text{ if }i>j.\end{cases}
\endaligned.$$

We also write $\overline{\OE}_{i,i}:=\sfK_{\bar i}=\overline{\sfK}_i$ for consistency. We have by  \cite[(5.10)]{DW1}
\begin{equation}\label{Om2}
\Om(\OE_{i,j})=\OE_{j,i},\quad\Om(\overline{\OE}_{i,j})=\overline{\OE}_{j,i}\;\;(1\leq i\neq j\leq n).
\end{equation}

A $\qUq$-supermodule $M$ is called a {\it weight supermodule}, if $M$ has a weight space decomposition $M=\bigoplus_{\la\in\Z^n}M_\la$ where
$$M_\la=\{m\in M\mid \sfK_i.m=\up^{\la_i}m\;\forall 1\leq i\leq n\}.$$
If $0\neq x\in M_\la$, we write $\wt(x)=\la$. Define the set of weights of $M$ by 
$$\wt(M):=\{\la\in\Z^n\mid M_\la\neq0\}.$$ Call $M$ a {\it polynomial weight supermodule} if $M$ is a weight module and $\wt(M)\subseteq \N^n$.

The following fact will be used in the last section.
\begin{lemma}\label{poly2}
Let $M$ be a polynomial weight supermodule and $\la\in\wt(M)$. Then
$\sfE_h.M_\la=0$ if $\la_{h+1}=0$ and $\sfF_h.M_\la=0$ if $\la_{h}=0$.
\end{lemma}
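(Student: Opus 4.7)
The plan is to observe that the generators $\sfE_h$ and $\sfF_h$ shift weights in a controlled way, so that applying them to a weight $\lambda$ with $\lambda_{h+1}=0$ (resp.\ $\lambda_h=0$) produces a vector in a weight space whose weight has a negative coordinate, which must vanish by the polynomial hypothesis.

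More precisely, first I would extract from relation (QQ2) the commutation $\sfK_i \sfE_h = \up^{\delta_{i,h} - \delta_{i,h+1}} \sfE_h \sfK_i$. Applied to $x \in M_\lambda$, this gives $\sfK_i.(\sfE_h.x) = \up^{\lambda_i + \delta_{i,h} - \delta_{i,h+1}} (\sfE_h.x)$, so $\sfE_h.M_\lambda \subseteq M_{\lambda + \alpha_h}$, where $\alpha_h = \epsilon_h - \epsilon_{h+1}$ is the simple root. The analogous computation with $\sfF_h$ yields $\sfF_h.M_\lambda \subseteq M_{\lambda - \alpha_h}$.

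Now if $\lambda_{h+1} = 0$, then $\lambda + \alpha_h$ has $(h+1)$-th coordinate equal to $-1$, hence $\lambda + \alpha_h \notin \N^n$. Since $M$ is a polynomial weight supermodule, $\wt(M) \subseteq \N^n$, so $M_{\lambda + \alpha_h} = 0$, forcing $\sfE_h.M_\lambda = 0$. The argument for $\sfF_h$ is identical: if $\lambda_h = 0$ then $\lambda - \alpha_h$ has $h$-th coordinate $-1 \notin \N$, so $M_{\lambda - \alpha_h} = 0$ and $\sfF_h.M_\lambda = 0$.

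There is no real obstacle here; the only thing to be a little careful about is reading off the correct weight shift from the relation (QQ2), and noting that the polynomial hypothesis is precisely what rules out negative coordinates in the weights.
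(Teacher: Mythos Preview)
Your proof is correct and follows essentially the same approach as the paper: use the commutation relations in (QQ2) to see that $\sfE_h.M_\lambda\subseteq M_{\lambda+\alpha_h}$ (resp.\ $\sfF_h.M_\lambda\subseteq M_{\lambda-\alpha_h}$), and then note that the shifted weight has a negative coordinate, contradicting the polynomial hypothesis unless the image is zero.
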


\begin{proof}If $\sfE_i.M_\la\neq0$, then $\sfE_i.M_\la\subseteq M_{\la+\al_i}$, where $\al_i=\bfe_i-\bfe_{i+1}$. However, $\la+\al_i$ is not a polynomial weight if $\la_{i+1}=0$. Hence, $\sfE_i.M_\la=0$ must be true. The proof for the other case is similar.
\end{proof}

\noindent
{\bf Some notation.} Let $\sZ=\mathbb Z[\up,\up^{-1}]$ be the integral Laurent polynomial ring.
For $c\in\Z$, let $[c]=\ds\frac{\q^c-\q^{-c}}{\q-\q^{-1}}$ and 
define, for $m\geq 1$,
$$
[m]^!=[m] [m-1]\cdots[1],\quad [0]^!=1.
$$
and
$$
\begin{bmatrix}
c\\m
\end{bmatrix}
=\frac{[c][c-1]\cdots[c-m+1]}{[m]^!},\quad
\begin{bmatrix}
c\\0
\end{bmatrix}
=1.
$$
Generally, for an element $Z$ in an associative $\Q(\q)$-algebra $\mc A$ and $m\in\N$, define its (quantum) divided powers $Z^{(m)}$ by setting
$$
Z^{(m)}=\frac{Z^m}{[m]^!}.
$$
If $Z$ is invertible, define, for $1\leq i,j\leq n$ and $t\geq 1,c\in\Z$,

\begin{equation}\label{Kt}
\aligned
\begin{bmatrix}
Z;c\\ t
\end{bmatrix}
&=\prod^t_{s=1}\frac{Z\q^{c-s+1}-Z^{-1}\q^{-c+s-1}}{\q^s-\q^{-s}},
\quad \text{and }\;\begin{bmatrix}
Z;c\\ 0
\end{bmatrix}=1.
\endaligned
\end{equation}

Let $M_n(\N)$ be the set of $n\times n$ matrices over non-negative integers, and let
\begin{equation}\label{MnNZ}
\aligned
&M_n(\N|\N)=\{(A^0|A^1)\mid A^0, A^1\in M_n(\N)\},\\
&M_n(\N|\Z_2)=\{(A^0|A^1)\mid A^0\in M_n(\N),A^1\in  M_n(\Z_2)\},\\
&M_n(\bN|\bZ_2)_r=\{A\in M_n(\bN|\bZ_2)\mid \Sigma_{i,j}(a_{i,j}^0+a_{i,j}^1)=r\}.
\endaligned
\end{equation}
Here the two $n\times n$ matrices $A^0,A^1$ have the form $A^0=(a_{i,j}^0)$ and $A^1=(a_{i,j}^1)$,
$\Z_2=\{0,1\}$ is regarded as a subset\footnote{When $\Z_2$ is used to define a superspace, it is regarded as an abelian group.} of $\N$, and $|$ is used to separate the even and odd parts in a  superstructure. We may identify $M_n(\N|\N)$ as the set of all $n\times 2n$ matrices. Thus,
 $a_{i,j}^0$ (resp. $a_{i,j}^1$) is the $(i,j)^0$-entry or $(i,j)$-entry (resp. $(i,j)^1$-entry, or $(i,\bar j)$-entry) of $A$, where\footnote{The reader should not confuse this notation with the subscripts in $K_{\bar i}, E_{\bar h}, F_{\bar h}$ in Definition \ref{presentqUq}, where we didn't assume $\bar i=n+i$. However, it is not harmful to understand $K_{\bar i}, E_{\bar h}, F_{\bar h}$ as $K_{n+i}, E_{n+ h}, F_{n+ h}$, or $\overline{K}_{i}, \overline{E}_{h}, \overline{F}_{h}$. The latter notation was used in \cite{DW1}.} 
\begin{equation}\label{bar on i}
\bar j=n+j\text{ for all }1\leq j\leq n.
\end{equation}

For convenience, we sometimes identify a matrix $A=(A^0|A^1)\in M_n(\N|\N)$ with the {\it square} matrix 
\begin{equation}\label{squareA}
\sqA=\bigg({A^0\;A^1\atop A^1\;A^0}\bigg).\end{equation}

\section{A PBW type basis and the Lusztig type form $\qUqZ$}

We now define a PBW type basis $\frak B$ for $\qUq$. Let
\begin{equation}\label{MnNZ'}
\aligned
M_n(\N|\N)'&=\{((a_{i,j}^0)|(a_{i,j}^1))\in M_n(\N|\N)\mid a^0_{ii}=0,\;\forall 1\leq i\leq n\}\\
M_n(\N|\Z_2)'&=M_n(\N|\N)'\cap M_n(\N|\Z_2).
\endaligned
\end{equation}
Linearly order the index set
$\{(i,j),(i,\bar j)\}_{ 1\leq i,j\leq n}\backslash\{(i,i)\}_{ 1\leq i\leq n}$  for root vectors as follows:
\begin{equation}\label{PBWorder}
\aligned
&\underline{(n-1,n),\ldots,(1,n),(1,\bar n),\ldots,(n,\bar n)},\cdots, \underline{(1,2), (1,\overline{2}),\cdots,(n,\overline{2})},\underline{(1,\bar 1),\cdots(n,\bar 1)},\\
&\underline{(2,1),(3,1),\cdots,(n,1)},\underline{(3,2),\cdots,(n,2)},\cdots,\underline{(n,n-1)}
\endaligned
\end{equation}

If we call the $j$-th column of the upper (resp., lower) triangular part of $A^0$ the $j^+$-column (resp., $j^-$-column) of $A$, and call the $j$-th column of $A^1$ the $\bar j$-th column of $A$. Then the ordering in \eqref{PBWorder} is arranged by $(j^+_\uparrow,\bar j_\downarrow)$-column indices for 
$j=n,n-1,\ldots,1$ and follows by $j^-_\downarrow$-column indices for $j=1,2,\ldots, n-1$. Here the arrows $\uparrow, \downarrow$ indicate the column indices are read upwards, downwards, respectively.

As an example, we indicate the order in the following $3\times 3$ matrix
$$A=\left(\left. \begin{array}{ccc}
         *& 6 & 2 \\
         13 & *& 1 \\
         14 & 15 & *\\
       \end{array}\right|\right.
\left.\begin{array}{ccc}
         10& 7&3 \\
         11& 8&4 \\
         12 & 9 &5\\
       \end{array} \right).
$$

Associated with $ A\in M_n(\bN|\bZ_2)',\,\bfj\in \mathbb Z^{n}$, we define 
\begin{equation}\label{PBW}
{\mathfrak b}^{A,\bfj}=\sfK^\bfj \OE_{A}^{n^+,\bar n}\cdots \OE_{A}^{2^+,\bar 2}\OE_A^{1^+,\bar1}\OE_{A}^{1^-}\cdots \OE_{A}^{(n-1)^-},
\end{equation}
where $\sfK^\bfj=\sfK_1^{\bfj_1}\sfK_2^{\bfj_2}\cdots \sfK_n^{\bfj_n}$ is called a $\sfK$-{\it segment} of ${\mathfrak b}^{A,\bfj}$,
\begin{equation}\label{EAj+}
\OE_{A}^{j^+,\bar j}=\begin{cases} \overline{\OE}_{1,1}^{a_{1,1}^1}\overline{\OE}_{2,1}^{a_{2,1}^1}\cdots \overline{\OE}_{n,1}^{a_{n,1}^1}, &\text{ if }j=1;\\
\OE_{j-1,j}^{(a_{j-1,j}^0)}\OE_{j-2,j}^{(a_{j-2,j}^0)} \cdots \OE_{1,j}^{(a_{1,j}^0)}
 \overline{\OE}_{1,j}^{a_{1,j}^1}\overline{\OE}_{2,j}^{a_{2,j}^1}\cdots \overline{\OE}_{n,j}^{a_{n,j}^1},&\text{ if }2\leq j\leq n.
 \end{cases}
 \end{equation}
is called a $(j^+,\bar j)$-{\it segment}, where the product $\OE_{j-1,j}^{(a_{j-1,j}^0)}\OE_{j-2,j}^{(a_{j-2,j}^0)} \cdots \OE_{1,j}^{(a_{1,j}^0)}
$ is called a $j^{+}$-{\it segment} and the rest product is called a $\bar j$-segment,\footnote{We dropped the brackets $(\;\;)$ since $a^1_{i,j}\in\Z_2$. This simplified notation is a good reminder of the condition.} and 
\begin{equation}\label{EAj-}
\OE_A^{j^-}=\OE_{j+1,j}^{(a_{j+1,j}^0)}\OE_{j+2,j}^{(a_{j+2,j}^0)}\cdots \OE_{n,j}^{(a_{n,j}^0)}\;(1\leq j\leq n-1)
\end{equation}
 is called a $j^-$-{\it segment}. (We ignored the subscript arrows for clarity.)
Note that $\OE_{A}^{j^+,\bar j},\OE_A^{j^-} $ are defined on each underlined section in \eqref{PBWorder}. Hence,
$\fkb^{A,\bfl}$ is a product taken over the ordering \eqref{PBWorder}.

\begin{remark}\label{rule}
 We remark the general rules for the product \eqref{PBW}. For two positive (resp. negative) even root vectors, the one with a larger column index is on the left (resp. right); if they have the same column index, then the one with a larger row index is on the left (resp. right). Odd root vectors are always put on the right of the even positive root vectors with the same column index and the row indices are increasing from left to right.
\end{remark}

For example, if $n=3$ and 
\begin{equation}\label{3by3}
\begin{aligned}
A=\left(\left. \begin{array}{ccc}
         0        &      a_{12}^0   & a_{13}^0\\
         a_{21}^0 &      0 & a_{23}^0 \\
         a_{31}^0 & a_{32}^0 & 0
       \end{array}\right|\right.
\left.\begin{array}{ccc}
         a_{11}^1 & a_{12}^1 & a_{13}^1 \\
         a_{21}^1 & a_{22}^1 & a_{23}^1 \\
         a_{31}^1 & a_{32}^1 & a_{33}^1
       \end{array} \right),
\end{aligned}
\end{equation}
then
$$\aligned
\fkb^{A,\bfl}&=\OE_{2,3}^{(a_{2,3}^0)}\OE_{1,3}^{(a_{1,3}^0)}\ol{\OE}_{1,3}^{a_{1,3}^1}\ol{\OE}_{2,3}^{a_{2,3}^1}\ol{\OE}_{3,3}^{a_{3,3}^1}\cdot
\OE_{1,2}^{(a_{1,2}^0)}\ol{\OE}_{1,2}^{a_{1,2}^1}\ol{\OE}_{2,2}^{a_{2,2}^1}\ol{\OE}_{3,2}^{a_{3,2}^1}\\
&\quad\,\cdot\ol{\OE}_{1,1}^{a_{1,1}^1}\ol{\OE}_{2,1}^{a_{2,1}^1}\ol{\OE}_{3,1}^{a_{3,1}^1}\cdot
\OE_{2,1}^{(a_{2,1}^0)}\OE_{3,1}^{(a_{3,1}^0)}\cdot\OE_{3,2}^{(a_{3,2}^0)}.
\endaligned$$

\begin{lemma}\label{The PBW basis}
 The set
$$\mathfrak B=
\{\fkb^{A,\bfj}  \mid A\in
M_n(\bN|\bZ_2)',\,\bfj\in \mathbb Z^{n}\}
$$
forms a basis, a {\rm PBW} type basis,  for  $\qUq$.
\end{lemma}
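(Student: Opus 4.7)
My plan is to reduce the statement to a PBW-type basis for $\qUq$ already established in the literature and then to translate between orderings via a triangular change of variables.

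First, I would recall from \cite{DW1} (building on Olshanski \cite{Ol}) that $\qUq$ admits a triangular decomposition $\qUq \cong \qUq^{-}\otimes \qUq^{0}\otimes \qUq^{+}$ in which each tensor factor carries a PBW basis given by ordered monomials in the root vectors $\OE_{i,j},\overline{\OE}_{i,j}$ and the Cartan data $\sfK_i^{\pm 1},\sfK_{\bar i}=\overline{\OE}_{i,i}$, with the odd $\overline{\OE}_{i,j}$ appearing only with exponents $0$ or $1$ thanks to the Clifford-type quadratic relations in (QQ1) and their non-Cartan analogues. The cardinality of $\mathfrak B$ exactly matches that of this standard PBW basis (one factor $\sfK_i^{\bfj_i}$ for each $i$, one even exponent in $\bN$ for each $(i,j)$ with $i\neq j$, and one odd exponent in $\bZ_2$ for each $(i,j)$), so it suffices to prove that the specific reordering prescribed by \eqref{PBWorder} still yields a basis.

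The key input is the collection of commutation formulas the paper announces will be revisited in this section: for a pair of root vectors one has a relation of the form $\OE_{k,l}\OE_{i,j}=\up^{c}\OE_{i,j}\OE_{k,l}+\sum_{\gamma}\up^{c_\gamma}\OE_{k,j}\OE_{i,l}+(\text{lower terms})$, together with parity-twisted analogues for mixed even--odd and odd--odd products and for the commutators of $\sfK_{\bar i}$ past non-Cartan root vectors coming from (QQ3), where ``lower terms'' means monomials strictly smaller in a suitable height filtration on $\qUq^{\pm}$. Using these formulas, I would argue that $\mathfrak b^{A,\bfj}$ differs from the monomial with the same multi-exponent $(A,\bfj)$ in the reference ordering only by corrections strictly lower in the height filtration. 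Hence the change-of-basis matrix between $\mathfrak B$ and the reference PBW basis (sharing the same labelling set) is upper unitriangular up to nonzero $\up$-power diagonal entries, and $\mathfrak B$ is itself a basis.

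The main obstacle will be bookkeeping for the odd root vectors. When two odd $\overline{\OE}$'s are interchanged, the Clifford-type anticommutator produces both $\up$-deformed quadratic even Cartan terms (via (QQ1)) and even root vectors that must then be placed into the correct segment of the target ordering. Similarly, swapping an odd Cartan $\sfK_{\bar i}=\overline{\OE}_{i,i}$ across a non-Cartan root vector invokes the $\up$-twisted relations (QQ3) and introduces odd root vectors of the same weight that must be absorbed into the appropriate $(j^{+},\bar j)$-segment. A well-chosen filtration---tracking separately the degree in each simple root together with a parity count on the odd part---should keep all these corrections strictly lower, and the anti-involution $\Omega$ in \eqref{Om2} will roughly halve the argument by symmetrically identifying the positive and negative halves of the PBW product.
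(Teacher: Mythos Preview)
Your triangular-reordering strategy is plausible in outline, but it takes a genuinely different and considerably longer route than the paper. The paper does \emph{not} reorder via commutation formulas at all. Instead it uses the elementary identification from \cite[(5.2), Lem.~5.7]{DW1}: each root vector $\OE_{i,j}$ (resp.\ $\overline{\OE}_{i,j}$) is, up to a nonzero scalar, $\sfK_j^{\pm1}$ times the Olshanski generator $L_{i,j}$ (resp.\ $L_{-i,j}$). Since the $\sfK_i$ commute with every root vector up to a power of $\upv$ (\cite[(5.9)]{DW1}), one may pull all Cartan factors through to the left. Thus each $\fkb^{A,\bfj}$ equals a nonzero scalar times $\sfK^{\bfj'}$ times \emph{the same} ordered $L$-monomial, and those $L$-monomials are precisely the PBW basis of \cite[Thm~6.2]{Ol} (equivalently \cite[Prop.~5.5]{DW1}). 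For fixed $A$ the map $\bfj\mapsto\bfj'$ is an affine bijection of $\Z^n$, so $\mathfrak B$ is a basis by direct rescaling, with no filtration argument needed.

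Your approach would instead compare $\mathfrak B$ to the \emph{triangular} PBW basis of \cite[Prop.~8.2]{DW1}. That comparison genuinely requires commutation formulas, because (as Remark~3.4 notes) $\mathfrak B$ is \emph{not} compatible with the triangular decomposition: the odd $\overline{\OE}_{i,j}$ with $i>j$ sit in the $(j^+,\bar j)$-segments to the left of the $j^-$-segments. So you would have to move every negative odd root vector across all positive even and all negative even root vectors, and the ``height filtration plus parity count'' you sketch would need to be robust under the rather intricate mixed commutators in \cite[\S6--7]{DW1}. This can presumably be done, but it is substantially more bookkeeping than the one-line rescaling argument. Also, your remark about halving the work via $\Omega$ is misleading here: the ordering \eqref{PBWorder} is not $\Omega$-symmetric (the $\bar j$-blocks lie entirely on the positive side), so $\Omega$ does not interchange the two halves of the product $\fkb^{A,\bfj}$.
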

\begin{proof}  By \cite[(5.2), Lem. 5.7]{DW1}, each of the $\OE_{i,j},\overline{\OE}_{i,j}$ is a scalar multiple of $\sfK_j$ times a corresponding $L_{i,j}$. Since $\sfK_i$ commutes with every root vector up to a scalar (see \cite[(5.9)]{DW1}, it follows that 
$$\OE_{A}^{j^+,\bar j}=f(\up)\sfK^{\bfj'}L_{j-1,j}^{(a_{j-1,j}^0)}L_{j-2,j}^{(a_{j-2,j}^0)} \cdots L_{1,j}^{(a_{1,j}^0)}
 L_{-1,j}^{a_{1,j}^1}L_{-1,j}^{a_{2,j}^1}\cdots L_{-n,j}^{a_{n,j}^1}$$
 for some nonzero $f(\up)\in\mathbb Q(\up),\bfj'\in\mathbb Z^{m+n}$. 
 Now the assertion follows from \cite[Thm 6.2]{Ol} or \cite[Proposition 5.5]{DW1}.
\end{proof}

We now introduce an integral basis for the Lusztig type form
of $\qUq$. Following \cite[\S8]{DW1}, let $\qUqZ$ be the $\sZ$-subalgebra of $\qUq$ generated by
\begin{equation}\label{Z-generators}
\sfK_i,\left[\begin{matrix}\sfK_i\\t\end{matrix}\right], \sfE_j^{(m)},\sfF_j^{(m)}, \sfK_{\bar i}, \sfE_{\bar j}, \sfF_{\bar j}\quad
(1\leq i,j\leq n, j\neq n, t,m\in\mathbb N),
\end{equation} 
where $\left[\begin{matrix}\sfK_i\\t\end{matrix}\right]=\left[\begin{matrix}\sfK_i;0\\t\end{matrix}\right]$ as defined in \eqref{Kt}.  By the definition of root vectors and \cite[(5.8), Prop. 7.4(3)]{DW1}, all root vectors $\OE_{i,j}, \ol{\OE}_{i,j}$ and their divided powers $\OE_{i,j}^{(m)}$ are in $\qUqZ$. Moreover, if we introduce the following degree function:
$$\deg(\OE_{i,j}^{(m)})=2m|i-j|,\;\;\deg( \ol{\OE}_{i,j})=2|i-j| (i\neq j);\quad \deg(\sfK_{\bar i})=1,\;\;\deg(\sfK_i)=0,$$
then, for each $A\in
M_n(\bN|\bZ_2)'$,
$$\deg(\fkb^{A,\bfj})=\sum_{i=1}^na_{i,i}^1+\sum_{1\leq i\neq j\leq n}2(a_{i,j}+a_{j,i})|i-j|=:\deg(A).$$
The following result is stated in \cite[Remark 8.3]{DW1}.

\begin{lemma}Let $\mathcal G=\{\OE_{i,j}^{(m)},\ol{\OE}_{i,j},\sfK_{\bar i}\mid
1\leq i,j\leq n, i\neq j, m\in\mathbb N\}$. For any two elements $\sfX,\sfY\in\mathcal G$, there exists  some $a\in\mathbb Z$ such that
$$\sfX\sfY=\up^a\sfY\sfX+f,$$
where $f$ is a linear combination of monomials of degree $<\deg(\sfX\sfY)$.
\end{lemma}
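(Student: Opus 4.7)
The plan is to verify the commutation formula by a case analysis on $(\sfX,\sfY)\in\mathcal G\times\mathcal G$, using induction on the height $|i-j|+|k-l|$ when both $\sfX,\sfY$ are root vectors indexed by $(i,j)$ and $(k,l)$. Three cases arise: (A) both $\sfX,\sfY$ are of the form $\sfK_{\bar i}$; (B) exactly one is of the form $\sfK_{\bar i}$; (C) both are root vectors. Case (A) follows immediately from (QQ1), which gives $\sfK_{\bar i}\sfK_{\bar j}=-\sfK_{\bar j}\sfK_{\bar i}+2\delta_{ij}(\sfK_i^2-\sfK_i^{-2})/(\up^2-\up^{-2})$: the correction term is a polynomial in $\sfK_i^{\pm 1}$ of degree $0$, strictly below $\deg(\sfK_{\bar i}\sfK_{\bar j})=2$, and the coefficient $-1$ is interpreted as the super-commutation sign that the associated-graded algebra carries on odd-odd pairs (reconciled with the $\up^a$ ansatz by the implicit $\Z_2$-parity convention).

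Case (B) I would handle by first computing the commutator of $\sfK_{\bar i}$ with the simple generators $\sfE_h,\sfF_h,\sfE_{\bar h},\sfF_{\bar h}$ from (QQ2)-(QQ3); these yield corrections that are products of one odd generator with $\sfK_j^{\pm 1}$, hence of strictly smaller degree than $\sfK_{\bar i}\cdot(\text{generator})$. To extend to arbitrary root vectors, I would induct on $|j-k|$ using the recursive definition $\OE_{j,k}=\OE_{j,l}\OE_{l,k}-\up^{\varepsilon_{jk}}\OE_{l,k}\OE_{j,l}$ (and its odd analog), combined with the super-Leibniz identity
\begin{equation*}
[\sfA,\sfB\sfC]_{q_1q_2}=[\sfA,\sfB]_{q_1}\sfC+q_1\sfB[\sfA,\sfC]_{q_2},\quad\text{where }[\sfA,\sfB]_q:=\sfA\sfB-q\sfB\sfA,
\end{equation*}
which passes $\sfK_{\bar i}$ across $\OE_{j,k}$ and leaves corrections expressible as commutators of $\sfK_{\bar i}$ with shorter root vectors.

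Case (C) is the technical heart. Its base case, $|i-j|=|k-l|=1$, reduces to divided powers of simple generators and is handled by (QQ4)-(QQ6) together with a quantum-binomial expansion. The inductive step rewrites the higher-height root vector via its recursive definition, applies the super-Leibniz identity to the resulting triple product, and invokes the induction hypothesis for the shorter commutators. Serre-type corrections that emerge---for instance $\OE_{i,i+2}=\sfE_i\sfE_{i+1}-\up\sfE_{i+1}\sfE_i$---are themselves single elements of $\mathcal G$; the appropriate bookkeeping is to collect such corrections into a single longer root vector before invoking the filtration, so that each surviving monomial in $f$ has degree strictly smaller than $\deg(\sfX\sfY)$.

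The principal obstacle is combinatorial: the subcases of (C) depend on the relative positions of the pairs $\{i,j\}$ and $\{k,l\}$ (disjoint, nested, or sharing an endpoint) and on the parities of the factors, so many distinct commutation formulas must be tabulated. Fortunately, essentially all these computations appear in the analysis following (5.8) and in \S 8 of \cite{DW1}; the present lemma is cited as Remark 8.3 there, and an independent proof amounts to confirming that the degree function $\deg(\OE_{i,j}^{(m)})=2m|i-j|$, $\deg(\ol{\OE}_{i,j})=2|i-j|$, $\deg(\sfK_{\bar i})=1$ is compatible with the PBW filtration of \cite{DW1}.
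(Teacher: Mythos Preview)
The paper does not prove this lemma; it simply records it as \cite[Remark~8.3]{DW1}. Your case split together with induction on height is the natural strategy and matches what lies behind the commutation formulas catalogued in \cite[\S\S6--8]{DW1}, so at the level of overall approach there is nothing to contrast.

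There is, however, a concrete gap in your Case~(C). You assert that once Serre-type corrections are collected into single longer root vectors, ``each surviving monomial in $f$ has degree strictly smaller than $\deg(\sfX\sfY)$.'' This fails already in your own example: $\sfE_i\sfE_{i+1}=\up\,\sfE_{i+1}\sfE_i+\OE_{i,i+2}$, and $\deg(\OE_{i,i+2})=2\cdot 2=4=\deg(\sfE_i)+\deg(\sfE_{i+1})$. More generally, every term in the expansion of $\OE_{j,k}^{(s)}\OE_{k,l}^{(t)}$ from \cite[Prop.~7.4(3)]{DW1} has the \emph{same} total degree as the product, since $(l-j)=(l-k)+(k-j)$ makes the degree additive along the chain. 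Your Case~(A) has a related issue: for $i\neq j$ one obtains $\sfK_{\bar i}\sfK_{\bar j}=-\sfK_{\bar j}\sfK_{\bar i}$, and $-1$ is not of the form $\up^a$ for any $a\in\Z$; invoking an ``implicit $\Z_2$-parity convention'' does not establish the literal claim. Both problems reflect that the lemma as printed is a loose paraphrase: the honest version needs a super sign $\pm\up^a$, and the filtration that actually drives the PBW argument of Proposition~\ref{ZbasisB} is finer than degree alone (for instance, degree together with the number of root-vector factors, so that a single long root vector counts as strictly smaller than a product of two short ones of the same total degree). Your proposal would go through with those refinements made explicit, but as written it asserts a strict degree drop that does not hold.
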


For $\tau,\la\in\mathbb N^n$, let 
\begin{equation}\label{Kla}
\sfK^\tau=\prod_{i=1}^n\sfK_i^{\tau_i}\quad\text{ and }\quad\left[\begin{matrix}\sfK\\\la\end{matrix}\right]=\prod_{i=1}^n\left[\begin{matrix}\sfK_i\\\la_i\end{matrix}\right].
\end{equation}

\begin{proposition}\label{ZbasisB}
The set
$$\mathfrak B_\sZ=\left\{\sfK^\tau \left[\begin{matrix}\sfK\\\la\end{matrix}\right]\mathfrak b^{A,\bfl}\,\bigg| \,A\in M_n(\N|\Z_2)', \la\in\mathbb N^n, \tau\in(\Z_2)^n\right\}$$
forms a $\sZ$-basis for $\qUqZ$.
\end{proposition}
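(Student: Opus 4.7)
The plan is to separately establish linear independence of $\mathfrak B_\sZ$ inside $\qUq$ and then the spanning identity $\sZ\mathfrak B_\sZ=\qUqZ$. For independence, I would compare $\mathfrak B_\sZ$ with the PBW basis $\mathfrak B$ of Lemma~\ref{The PBW basis}: the two sets differ only in their Cartan (``$\sfK$-segment'') portion, with factors $\sfK^\bfj$, $\bfj\in\mathbb Z^n$, replaced by $\sfK^\tau\left[\begin{matrix}\sfK\\\la\end{matrix}\right]$ with $\tau\in(\Z_2)^n$, $\la\in\N^n$. Since the even Cartan generators $\sfK_i^{\pm1}$ pairwise commute, it suffices to check for each $i$ that $\{\sfK_i^{\tau_i}\left[\begin{matrix}\sfK_i\\\la_i\end{matrix}\right]\mid \tau_i\in\Z_2,\la_i\in\N\}$ is $\Q(\up)$-linearly independent inside $\Q(\up)[\sfK_i,\sfK_i^{-1}]$. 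Expanding via \eqref{Kt} shows that this element has nonzero highest Laurent term $\sfK_i^{\tau_i+\la_i}$ and nonzero lowest term $\sfK_i^{\tau_i-\la_i}$, so the pair $(\tau_i,\la_i)$ is recoverable from the support. A triangular change-of-basis matrix from $\mathfrak B$ to $\mathfrak B_\sZ$ then yields linear independence in $\qUq$, a fortiori in $\qUqZ$.

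Next, every element of $\mathfrak B_\sZ$ clearly lies in $\qUqZ$: the Cartan factor is a monomial in the generators in \eqref{Z-generators}, and $\mathfrak b^{A,\bfl}$ is built from divided powers $\OE_{i,j}^{(m)}$ and odd root vectors $\ol\OE_{i,j}$, all of which are in $\qUqZ$ by the definition of root vectors and \cite[Prop.~7.4(3)]{DW1}. The substantive task is the opposite inclusion $\qUqZ\subseteq\sZ\mathfrak B_\sZ$. For this I would show that $\sZ\mathfrak B_\sZ$ is stable under left multiplication by each generator in \eqref{Z-generators}; since $1\in\mathfrak B_\sZ$, spanning then follows from $\sZ$-algebra generation. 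Stability is proved by induction on the total degree $\deg(A)+\sum_i(\tau_i+\la_i)$: the commutation lemma stated just before the proposition gives, for any generators $\sfX,\sfY\in\mathcal G$, an identity $\sfX\sfY=\up^a\sfY\sfX+f$ with $\deg f<\deg(\sfX\sfY)$. Pushing $\sfX$ rightward through the PBW-ordered segments of $\mathfrak b^{A,\bfl}$ according to \eqref{PBWorder} then produces a leading term of the required PBW shape (with modified multiplicities in $A$) plus strictly lower-degree remainders, absorbed by the inductive hypothesis. When $\sfX$ finally reaches the Cartan segment, I would invoke the classical Lusztig identities in the quantum $\mathfrak{sl}_2$-type subalgebra generated by $\sfK_i^{\pm1}$—which express products such as $\sfK_i^{\pm1}\left[\begin{matrix}\sfK_i\\\la_i\end{matrix}\right]$ and $\left[\begin{matrix}\sfK_i\\t\end{matrix}\right]\left[\begin{matrix}\sfK_i\\\la_i\end{matrix}\right]$ as $\sZ$-linear combinations of the prescribed monomials $\sfK_i^{\tau_i}\left[\begin{matrix}\sfK_i\\\mu_i\end{matrix}\right]$—to complete the reduction into normal form.

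The main obstacle I anticipate is the interaction with the odd Cartan. The anticommutator in (QQ1) gives $\sfK_{\bar i}^2=(\sfK_i^2-\sfK_i^{-2})/(\up^2-\up^{-2})$. Since $\sfK_{\bar i}=\ol\OE_{i,i}$ sits inside the $(j^+,\bar j)$-segments of $\mathfrak b^{A,\bfl}$ with exponent in $\Z_2$, multiplying two such odd Cartan factors produces this quantum-integer expression, which must be explicitly verified to be an $\sZ$-linear combination of the Cartan monomials $\sfK^\tau\left[\begin{matrix}\sfK\\\la\end{matrix}\right]$ of $\mathfrak B_\sZ$. A secondary but routine difficulty is tracking the anticommutation signs generated when divided powers $\sfE_j^{(m)}$ and $\sfF_j^{(m)}$ are pushed through odd root-vector segments $\ol\OE_{i,j}$; integrality of the resulting coefficients is guaranteed in principle by the triangularity of the commutation lemma, but careful segment-by-segment bookkeeping along \eqref{PBWorder} is required to make this explicit.
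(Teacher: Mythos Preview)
Your plan is sound, but it takes a longer route than the paper. The paper does not verify stability of $\sZ\mathfrak B_\sZ$ under the generators. Instead, it imports the $\sZ$-basis for $\qUqZ$ already established in \cite[Prop.~8.2(2)]{DW1} (a PBW basis ordered compatibly with the triangular decomposition $U^-U^0U^+$) and observes that the commutation lemma $\sfX\sfY=\up^a\sfY\sfX+f$ with $\deg f<\deg(\sfX\sfY)$ lets one reorder each such basis element into a $\sZ$-combination of elements of $\mathfrak B_\sZ$ by descending induction on degree. Spanning follows immediately, and linear independence is then automatic (your argument via the Cartan change of basis, or simply cardinality against the known free $\sZ$-module). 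The paper's route is therefore a two-line reduction to an external result, whereas your route re-proves that result from scratch.

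Your self-contained argument would go through, but note a small expositional slip: in $\sfK^\tau\!\left[\begin{smallmatrix}\sfK\\\la\end{smallmatrix}\right]\mathfrak b^{A,\bfl}$ the even Cartan segment sits on the \emph{left} (see \eqref{PBW}), so a left-multiplying generator first passes through the Cartan via the standard shift identities for $\left[\begin{smallmatrix}\sfK_i;c\\t\end{smallmatrix}\right]$ and only then enters the root-vector segments, not the other way around as you describe. The obstacles you anticipate---the odd-Cartan squaring $\sfK_{\bar i}^2=(\sfK_i^2-\sfK_i^{-2})/(\up^2-\up^{-2})$ and the sign bookkeeping through odd segments---are genuine but routine; the paper's approach sidesteps them entirely by leaning on \cite{DW1}.
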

\begin{proof} Clearly, span$_\sZ(\mathfrak B_\sZ)\subseteq \qUqZ$. Conversely, by the commutation formulas in the lemma above, every element in the basis given in \cite[Prop. 8.2(2)]{DW1} can be written as a linear combination of basis elements in  $\mathfrak B_\sZ$. Thus,
 $\qUqZ\subseteq\text{span}_\sZ(\mathfrak B_\sZ)$.
\end{proof}

\begin{remark}In \cite[Prop. 8.2]{DW1}, Wan and the first author constructed PBW bases for the Lusztig form $\qUqZ$ which are compatible with the triangular decomposition $\qUqZ=U_{\up,\sZ}^-\otimes U_{\up,\sZ}^0\otimes U_{\up,\sZ}^+$. In other words, these bases are products of bases for the triangular parts. This is not the case for bases $\mathfrak B$ and $\mathfrak B_\sZ$.
\end{remark}

In order to construct a monomial basis for $\qUq$ via the generators in \eqref{Z-generators}, we need a new order relation on the basis $\mathfrak B$ with respect to which the transition matrix between the two bases is upper triangular. This requires more accurate analysis on certain commutation formulas.

Consider the lexicographical order on $\N^m$: for $\bfa,\bfb\in\N^m$, 
\begin{equation}\label{lexic}
\bfa<\bfb\iff a_1<b_1 \text{ or }a_i=b_i \;(1\leq i<k) \text{ and }a_k<b_k, \text{ for some }k>1.
\end{equation}
To specify the order more precisely, we say that $\bfa<\bfb$ {\it at the $k$th component}.

Define a map 
$$\vec{(\;\;)}:\ M_n(\bN|\bZ_2)'  \longrightarrow \bN^{2n^2-n}, A\longmapsto \vec A,$$ where
\begin{equation}\label{vecA}
\begin{aligned}
\vec A&= (\underline{a_{n,n}^1,\cdots, a_{1,n}^1, a_{1,n}^0,\cdots,a_{n-1,n}^0},
     \underline{a_{n,n-1}^1,\cdots, a_{1,n-1}^1, a_{1,n-1}^0,\cdots, a_{n-2,n-1}^0}, \\
    &\cdots,\underline{a_{n,2}^1\cdots a_{1,2}^1, a_{1,2}^0},\underline{a_{n,1}^1,\cdots, a_{1,1}^1}, \underline{a_{n,1}^0,\cdots a_{2,1}^0},\underline{ a_{n,2}^0,\cdots, a_{3,2}^0},\cdots,  \underline{a_{n,n-1}^0}).
\end{aligned}
\end{equation}
Here we divide the sequence $\vec A$ in $2n-1$ sections as underlined above. Note that the entry ordering in $\vec A$ can be obtained from the ordering in \eqref{PBWorder} by reversing every underlined section. 

 We will label the sections by their column indices so that $\vec A$ is the sequence consisting of $({\bar j}_\uparrow, j^{+}_\downarrow)$-columns for $j=n,n-1,\ldots,1$, followed by $j^-_\uparrow$-columns for $j=1,2,\ldots,n-1$. Here the arrows $_{\uparrow,\downarrow}$ indicate the direction we read the entries of the column. For example, 
 a $({\bar j}_\uparrow, j^{+}_\downarrow)$-column consists of the $j$-th column of $A^1$, reading from bottom to top, and the $j$-th column of the upper triangular part of $A^0$, reading downwards. (Note that,  for $j=1$, the $j_\downarrow^+$-column is empty.)
 

For example, if $n=3$, then $\vec A=(a_1,a_2,\ldots,a_{15})$ where 
\begin{equation}\label{order A}
A=\left(\left. \begin{array}{ccc}
         * & a_9 & a_4 \\
         a_{14} & * & a_5 \\
         a_{13} & a_{15} & *
       \end{array}\right|\right.
\left.\begin{array}{ccc}
         a_{12} & a_8 & a_3 \\
         a_{11} & a_7 & a_2 \\
         a_{10} & a_6 & a_1
       \end{array} \right).
\end{equation}
\begin{conventions}\label{conv}
We say that $\vec A$ (or $A$) {\it starts at $a^i_{h,k}$} ($i\in \Z_2$) if $a^i_{h,k}>0$ and entries of $\vec A$ before $a^i_{h,k}$ are all zero. We say that $\vec A$ (or $A$) has the {\it leading entry}  $a^i_{h,k}$ if $\vec A$ starts at $a^i_{h,k}$.
We say $\vec A$ (or $A$) {\it starts after $a^i_{h,k}$} if  $a^i_{h,k}=0$ and entries of $\vec A$ before $a^i_{h,k}$ are all zero. In other word, the leading entry of $A$ in this case occurs after the entry 
$a^i_{h,k}$. A column $j^-, j^+$ or $\bar j$ is a leading column if it contains the leading entry of $\vec A$.
\end{conventions}
We extend the map $\vec{(\;\;)}$ to $M_n(\bN|\bZ_2)$ by setting $\vec A:=\vec A'$, where $A'\in M_n(\bN|\bZ_2)'$
is obtained from $A=(A^0|A^1)$ by replacing the diagonal of $A^0$ with zeros.
Define the pre-order $\preceq$ on $M_n(\bN|\bZ_2)$ by setting, for $A, B\in M_n(\bN|\bZ_2)$, 
\begin{equation}\label{prec}
B\preceq A\iff \vec B \leq \vec A,
\end{equation}
 where $``\leq"$ is the lexicographical order defined in \eqref{lexic}. We will also say below that $A\prec B$ {\it at the $(i,j)$-entry} instead of ``at the $k$th component''.

In order to establish a triangular relation between the PBW basis and a monomial basis relative to the order $\preceq$ on $M_n(\bN|\bZ_2)$, we need to rewrite certain commutation formulas for root vectors, given in \cite{DW1}, in terms of the PBW basis elements. In each formula below, the leading term in the right hand sides is the largest element relative to the order $\preceq$. In other words, it has the form:
$$\fkb^{A,\bfj}+(\text{lower terms}),$$ 
where``lower terms'' means a linear combination of $\fkb^{B,\bfj'}$ with $B\prec A$
(i.e., $B\preceq A$ but $B\neq A$).
\begin{lemma}\label{comm formulas}
\begin{enumerate}
\item If $j<k<l$ and $s,t\in\mathbb N$, then
$$
\begin{cases}
(1^+)& \OE_{j,k}^{(s)}\OE_{k,l}^{(t)}=\OE_{k,l}^{(t-s)}\OE_{j,l}^{(s)}+(\text{lower terms}),\;\;\text{ if }s\leq t;\\
(1^-)& \OE_{l,k}^{(t)}\OE_{k,j}^{(s)}=\OE_{k,j}^{(s-t)}\OE_{l,j}^{(t)}+(\text{lower terms}),\;\;\text{ if }t\leq s.
\end{cases}$$
\item For $a<i<j$ or $i<j<a$, $\overline{\sfK}_a\OE_{i,j}^{(m)}=\OE_{i,j}^{(m)}\overline{\sfK}_a$, and
 $$\overline{\sfK}_i\OE_{i,j}^{(m)}=\up^m\sfK_i^{-1}\OE_{i,j}^{(m-1)}\overline{\OE}_{i,j}+\up^m\OE_{i,j}^{(m)}\overline{\sfK}_i.$$
 \item For $1\leq h<n$, $1\leq i<j\leq n$,  we have
 $$\sfF_h\OE_{i,j}^{(m)}=\begin{cases}\sfE_h^{(m)}\sfF_h-\Big[{\widetilde \sfK_h;1-m\atop 1}\Big]\sfE_h^{(m-1)},&\text{ if }i=h, j=h+1;\\
  \OE_{h,j}^{(m)}\sfF_h+\up^{2-m}\sfK_h\sfK_{h+1}^{-1}\OE_{h+1,j} \OE_{h,j}^{(m-1)},&\text{ if }i=h<h+1<j;\\
    \OE_{i,h+1}^{(m)}\sfF_h-\sfK_h^{-1}\sfK_{h+1} \OE_{i,h+1}^{(m-1)}\OE_{i,h},&\text{ if }i<h<h+1=j;\\
  \OE_{i,j}^{(m)}\sfF_h,&\text{ otherwise.}\\
\end{cases}$$
 
\item For $i<j$ and $i<k$,
$$\OE_{j,i}\overline{\OE}_{i,k}=\begin{cases}
\sfK_i\ol{\sfK}_{j}+\overline{\OE}_{i,j}\OE_{j,i}-\sfK_j\ol{\sfK}_{i},&\text{ if }k=j;\\
\overline{\OE}_{j,k}\sfK_{j}^{-1}\sfK_i+\overline{\OE}_{i,k}\OE_{j,i},&\text{ if }j<k;\\
\sfK_i\sfK_k\overline{\OE}_{j,k}+\overline{\OE}_{i,k}\OE_{j,i}+(\up^{-1}-\up)\sfK_k\OE_{j,k}\ol{\sfK}_{i},&\text{ if }j>k.
\end{cases}
$$
\end{enumerate}
\end{lemma}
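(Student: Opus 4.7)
The strategy is to reduce every assertion to concrete commutation identities latent in the defining relations (QQ1)--(QQ6) together with the inductive definitions of the higher root vectors $\OE_{i,j}$ and $\ol{\OE}_{i,j}$ recalled just before the lemma. The pivotal remark is that the defining equation
$$\OE_{i,j}=\OE_{i,k}\OE_{k,j}-\up^{\varepsilon_{ij}}\OE_{k,j}\OE_{i,k}$$
may be rewritten as a $\up$-commutation $\OE_{i,k}\OE_{k,j}=\up^{\varepsilon_{ij}}\OE_{k,j}\OE_{i,k}+\OE_{i,j}$, and analogously for the odd vectors. These $\up$-commutations, applied iteratively, generate every desired identity, and the passage from ordinary powers to divided powers is then routine $[m]^!$-arithmetic.

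For (1) I would start with $s=t=1$: the defining relation with intermediate index $k$ gives $\OE_{j,k}\OE_{k,l}=\up\,\OE_{k,l}\OE_{j,k}+\OE_{j,l}$ since $\varepsilon_{j,l}=1$. Inducting on $s$ yields $\OE_{j,k}^{(s)}\OE_{k,l}^{(t)}$ as a sum whose $\preceq$-maximal term is $\OE_{k,l}^{(t-s)}\OE_{j,l}^{(s)}$; every correction monomial carries a surviving $\OE_{j,k}$ factor, which belongs to the $k^-$-column, read strictly later than the leading $\bar l$- and $l^+$-columns that control $\vec A$. The mirror identity $(1^-)$ follows from $(1^+)$ by applying the anti-involution $\Om$ from \eqref{Om}--\eqref{Om2}. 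For (2), the base case $\ol{\sfK}_i\sfE_i=\up\sfE_i\ol{\sfK}_i+\sfE_{\bar i}\sfK_i^{-1}$ is (QQ3); induction on $|j-i|$ handles $\ol{\sfK}_i\OE_{i,j}$ by plugging the recursive definition of $\OE_{i,j}$ into the product, commuting $\ol{\sfK}_i$ past each intermediate factor using (QQ3) or the inductive hypothesis, and collecting. Commutation with $\ol{\sfK}_a$ for $a<i$ or $a>j$ is cheaper because $\ol{\sfK}_a$ commutes with every factor entering the recursion. A final induction on $m$ promotes the identity to divided powers and supplies the coefficient $\up^m$.

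For (3), I organise the cases by the intersection $\{i,j\}\cap\{h,h+1\}$. When $(i,j)=(h,h+1)$, the formula is the classical quantum $\mathfrak{sl}_2$ identity derived from (QQ4). When exactly one of $i,j$ equals an endpoint $h$ or $h+1$, I apply $\sfF_h$ to the defining equation $\OE_{i,j}=\OE_{i,k}\OE_{k,j}-\up\OE_{k,j}\OE_{i,k}$ with $k=h+1$ (case $i=h$) or $k=h$ (case $j=h+1$), so that $\sfF_h$ crosses one of the two factors trivially via (QQ4) and contributes a single weighted correction on the other; the divided-power version follows by induction on $m$. The ``otherwise'' case reduces to the same recipe with both factors commuting with $\sfF_h$. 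For (4), the three formulas are rearrangements of the defining identity $\ol{\OE}_{j,k}=\OE_{j,i}\ol{\OE}_{i,k}-\up^{\pm 1}\ol{\OE}_{i,k}\OE_{j,i}$ combined with the mixed $\up$-commutator $[\OE_{j,i},\ol{\OE}_{i,k}]$ read off (QQ4) and its derived consequences; the distinctive feature of $k=j$ is that the diagonal odd Cartan $\ol{\OE}_{i,i}=\ol{\sfK}_i$ appears, matching the ``diagonal'' line of (QQ4).

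The principal obstacle lies in verifying the $\preceq$-triangularity claim, namely that each stated right-hand side is the unique $\preceq$-maximal PBW monomial produced by the fully expanded product. Because the pre-order reads the $\bar j_\uparrow$ and $j^{+}_\downarrow$ columns from $j=n$ down to $j=1$, and only then turns to the $j^{-}_\uparrow$ columns from $j=1$ upwards, the leading entry of $\vec A$ is highly sensitive to which root vector occupies the top slot of its column. For each case above I therefore plan to check by hand that every suppressed ``lower term'' either strips an exponent from a high-priority column or deposits an exponent into a low-priority one, in both cases strictly lowering $\vec A$ in the lexicographic order of \eqref{lexic}, so that the advertised term is the sole $\preceq$-maximal contributor in the PBW basis $\mathfrak{B}$.
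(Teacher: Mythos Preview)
The paper's proof differs from yours in strategy: rather than deriving each commutation identity from (QQ1)--(QQ6) and the recursive root-vector definitions, it imports every formula wholesale from the companion paper \cite{DW1}---specifically Propositions~6.4, 7.4, 7.6, 7.8, 7.10 and equation (5.9) there---and then merely rewrites and reorders the result to expose the $\preceq$-leading term. Your self-contained approach is in principle viable but substantially longer, and two of the inductive steps you sketch do not close as cleanly as you suggest.

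The clearest gap is in part (2). Your induction on $|j-i|$ writes $\OE_{i,j}=\OE_{i,k}\OE_{k,j}-\up\,\OE_{k,j}\OE_{i,k}$ and pushes $\ol{\sfK}_i$ through; after collecting, the odd contribution is $\up\,\sfK_i^{-1}\bigl(\ol{\OE}_{i,k}\OE_{k,j}-\up\,\OE_{k,j}\ol{\OE}_{i,k}\bigr)$. For this to equal $\up\,\sfK_i^{-1}\ol{\OE}_{i,j}$ you need the identity $\ol{\OE}_{i,j}=\ol{\OE}_{i,k}\OE_{k,j}-\up\,\OE_{k,j}\ol{\OE}_{i,k}$, which is \emph{not} the recursive definition recalled before the lemma (that one places the bar on the $(k,j)$-factor, not on the $(i,k)$-factor). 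The paper closes this gap by invoking \cite[Prop.~7.8(1)]{DW1}; your sketch offers no substitute. Similarly, your treatment of (4) as a ``rearrangement of the defining identity $\ol{\OE}_{j,k}=\OE_{j,i}\ol{\OE}_{i,k}-\up^{\pm1}\ol{\OE}_{i,k}\OE_{j,i}$'' cannot work: since $i<j$ and $i<k$, the index $i$ never lies strictly between $j$ and $k$, so no such recursion is available. The paper instead obtains (4) by applying the anti-involution $\Om$ to three cases of \cite[Prop.~6.4(2)]{DW1}; your appeal to ``(QQ4) and its derived consequences'' does not identify a mechanism producing the anomalous third term $(\up^{-1}-\up)\sfK_k\OE_{j,k}\ol{\sfK}_i$ in the $j>k$ case.
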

\begin{proof}The first assertion in (1) follows from \cite[Prop.~7.4(3)]{DW1}. More precisely, if $1\leq s\leq t$, then we have
\begin{equation}\label{prv}
\OE_{j,k}^{(s)}\OE_{k,l}^{(t)}=\OE_{k,l}^{(t-s)}\OE_{j,l}^{(s)}+\sum_{a=0}^{s-1}\up^{(s-a)(t-a)}
\OE_{k,l}^{(t-a)}\OE_{j,l}^{(a)}\OE_{j,k}^{(s-a)},
\end{equation}
Here, we have singled out the leading term relative to $\preceq$ and all other terms in the right hand side are PBW basis elements as defined in \eqref{PBW}. This proves (1$^+$). By applying $\Omega$ to  \cite[Prop.~7.4(3)]{DW1}, a similar augment gives (1$^-$).

The first commutation relation in (2) follows from \cite[Proposition 7.10(1)]{DW1}, while the second follows from \cite[Proposition 7.10(2)]{DW1} and \cite[Proposition 7.8(1)]{DW1}. 

The first and fourth cases in (3) are special cases of \cite[Prop. 7.6]{DW1}(1)\&(2), respectively.
By  \cite[Prop. 7.6]{DW1}(3)\&(4),
$$\sfF_h\OE_{i,j}^{(m)}=\begin{cases}
  \OE_{h,j}^{(m)}\sfF_h+\up^{1-m}\OE_{h+1,j}\sfK_h\sfK_{h+1}^{-1} \OE_{h,j}^{(m-1)},&\text{ if }i=h<h+1<j;\\
    \OE_{i,h+1}^{(m)}\sfF_h-\up^{m-1}\sfK_h^{-1}\sfK_{h+1} \OE_{i,h}\OE_{i,h+1}^{(m-1)},&\text{ if }i<h<h+1=j.\\
\end{cases}$$
Here we need to apply $\Om$ in \eqref{Om} to \cite[Prop. 7.6(3)]{DW1} for $m=1,s\geq 1$ to obtain the first case here. By \cite[(5.9)]{DW1}, $\OE_{h+1,j}\sfK_h\sfK_{h+1}^{-1}=\up \sfK_h\sfK^{-1}_{h+1}\OE_{h+1,j}$ and, by \cite[Prop. 7.4(2)]{DW1}, 
$\OE_{i,h}\OE_{i,h+1}^{(m-1)}=\up^{-(m-1)}\OE_{i,h+1}^{(m-1)}\OE_{i,h}$. So (3) follows.

To see (4),  we extract the first, third and sixth cases from \cite[Proposition 6.4 (2)]{DW1},
and applying $\Omega$ to them gives
$$\overline{\OE}_{i,k}\OE_{j,i}=\begin{cases}
\OE_{j,i}\overline{\OE}_{i,j}-\sfK_i\ol{\sfK}_{j}+\ol{\sfK}_{i}\sfK_j,&\text{ if }k=j;\\
\OE_{j,i}\overline{\OE}_{i,k}-\overline{\OE}_{j,k}\sfK_{j}^{-1}\sfK_i,&\text{ if }i<j<k;\\
\OE_{j,i}\overline{\OE}_{i,k}-\sfK_i\sfK_k\overline{\OE}_{j,k}-(\up^{-1}-\up)\sfK_k\ol{\sfK}_{i}\OE_{j,k},&\text{ if }i<k<j.\\
\end{cases}
$$
Rewriting gives the required formula.
\end{proof}

\setcounter{equation}{0}

\section{Monomial bases for $\qUq$ and $\qUqZ$}
In this section,, we will construct a new basis for $\qUq$ in terms of the generators $\sfE_h^{(m)},\sfF_h^{(m)}, \sfK^{\pm1}_i,\sfE_{\bar h},\sfF_{\bar h},\sfK_{\bar 1}$; see Remark \ref{gen rel}. 
 We will first define some monomials $\fkm^{A,\bfj}$ to form the set
$\mathfrak{M}=\{\fkm^{A,\bfj}\mid A\in M_n(\bN|\bZ_2)',\bfj\in\Z^n\}$,
and then to establish a triangular relation with respect to the ordering $\preceq$ defined in \eqref{prec} between $\frak M$ and the PBW basis $\frak B$ in Lemma \ref{The PBW basis}.

For any $A=(A^0|A^1)\in M_n(\bN|\bZ_2)'$ with $A^0=(a_{i,j}^0)$, $A^1=(a_{i,j}^1)$, and $1\leq i, j\leq n$, define odd monomials $\fkF^1_{i,j}=\fkF^1_{i,j}(A)$ by
\begin{equation}\label{Fij}
\fkF^1_{1,j}=\k_{\bar 1}^{a_{1,j}^1},\quad\fkF^1_{i,j}=\f_{ i-1}^{a_{i,j}^1} \cdots \f_{ 1}^{a_{i,j}^1}  \k_{\bar 1}^{a_{i,j}^1}\;\;(2\leq i\leq n).
\end{equation}
and even monomials $\fkE_{j-1}^0=\fkE_{j-1}^0(A)$ and $\fkF_j^0=\fkF_j^0(A)$ by $\fkE_{0}^0=1$,
\begin{equation}\label{MonBs0}
\aligned
\fkE_{j-1}^0&= \e_{1}^{(a_{1,j}^0+|\bfc_j^1|)} \e_{2}^{(a_{1,j}^0+a_{2,j}^0+|\bfc_j^1|)} \cdots
\e_{j-1}^{(\sum_{1\leq s\leq j-1}a_{sj}^0+|\bfc_j^1|)}\;\;(2\leq j\leq n),\\
\fkF_j^0&=\sfF_{n-1}^{(a_{n,j}^0)} \sfF_{n-2}^{(a_{n-1,j}^0+a_{n,j}^0)}\cdots \sfF_j^{(\sum_{j+1\leq s\leq n}a_{s,j}^0)}
\;\;(1\leq j\leq n-1),\\
\endaligned
\end{equation}
where $\bfc_j^1$ is the $j$th column of $A^1$, and $|\bfc_j^1|=\sum_{1\leq s\leq n}a_{s,j}^1$. 

Define, for $A\in M_n(\bN|\bZ_2)',\bfj\in\Z^n$,
\begin{equation}\label{MonBs}
\aligned
\fkm^{A,\bfj}&=\sfK_1^{j_1}\cdots \sfK_n^{j_n}\fkm^{A,\bfl},\quad\text{ where}\\
\fkm^{A,\bfl}&=\bigg(\prod_{j=1}^n(\fkF_{1,n-j+1}^1\fkF_{2,n-j+1}^1\cdots \fkF_{n,n-j+1}^1\fkE_{n-j}^0)\bigg)\cdot\fkF_{1}^0\fkF_2^0\cdots\fkF_{n-1}^0.
\endaligned
\end{equation}
Here the order in the product $\prod$ is ordered naturally $1,2,\ldots n$ from left to right. Note that each factor $\fkF_{1,j}^1\fkF_{2,j}^1\cdots \fkF_{n,j}^1\fkE_{j-1}^0$ corresponds to the ordered $(\bar j_\downarrow,j^+_\downarrow)$-column (on exponents!), interpreted similarly to the  $(\bar j_\uparrow,j^+_\downarrow)$-column in \eqref{vecA}, and the entire product $\fkm^{A,\bfl}$ is taken over the ordering in \eqref{vecA} on sections.

For example, if $A$ is given as in \eqref{3by3}, 
then
\begin{equation}\label{tdef}\notag
\begin{aligned}
\fkm^{A,{\bf0}}&=
(\k_{\bar 1}^{a_{13}^1}
\f_{ 1}^{a_{23}^1}\k_{\bar 1}^{a_{23}^1}
\f_{2}^{a_{33}^1}\f_{ 1}^{a_{33}^1}\k_{\bar 1}^{a_{33}^1})
(\e_1^{(a_{13}^0+a_{13}^1+a_{23}^1+a_{33}^1)}
\e_2^{(a_{13}^0+a_{23}^0+a_{13}^1+a_{23}^1+a_{33}^1)})\\
&\quad\;(\k_{\bar 1}^{a_{12}^1}
\f_{ 1}^{a_{22}^1}\k_{\bar 1}^{a_{22}^1}
\f_{2}^{a_{32}^1}\f_{ 1}^{a_{32}^1}\k_{\bar 1}^{a_{32}^1})
\e_1^{(a_{12}^0+a_{12}^1+a_{22}^1+a_{32}^1)}\\
&\quad\;(\k_{\bar 1}^{a_{11}^1}
\f_{ 1}^{a_{21}^1}\k_{\bar 1}^{a_{21}^1}
\f_{2}^{a_{31}^1}\f_{ 1}^{a_{31}^1}\k_{\bar 1}^{a_{31}^1})\\
&\quad\;
(\f_2^{(a_{31}^0)} \f_1^{(a_{21}^0+a_{31}^0)}) \f_2^{(a_{32}^0)}.
\end{aligned}
\end{equation}

Now we compute the leading terms of certain monomials when written as linear combinations of the PBW basis elements in $\mathfrak B$. In the following, a statement like ``a lower term having a smaller $j^+$-segment'' means that the matrix associated the lower term is less than (under $\preceq$) that of the leading term at an entry in the $j^+_\downarrow$-column.

Let
$$\up^{\mathbb Z}\sfK^{\mathbb Z^n}=\{\up^a\sfK^\bfj\mid a\in\mathbb Z,\bfj\in\mathbb Z^n\}.$$
\begin{lemma}\label{leater}
\begin{enumerate}
\item For $1\leq i<j,$ if $1\leq a_i\leq \cdots\leq a_{j-1}$, then 
$$\e_{i}^{(a_i)}\e_{i+1}^{(a_{i+1})}\cdots\e_{j-1}^{(a_{j-1})}
={\OE}_{j-1,j}^{(a_{j-1}-a_{j-2})}\cdots{\OE}_{i+1,j}^{(a_{i+1}-a_{i})}{\OE}_{i,j}^{(a_{i})}+ \mbox{(lower terms)},$$
where each lower term has a smaller $j^{+}$-segment.
\item For $1\leq i<j$, if $a_i\geq \cdots\geq a_{j-1}\geq 1,$ then 
$$\f_{j-1}^{(a_{j-1})}\cdots\f_{i+1}^{(a_{i+1})}\f_{i}^{(a_i)}
={\OE}_{i+1,i}^{(a_{i}-a_{i+1})}\cdots{\OE}_{j-1,i}^{(a_{j}-a_{j-1})}{\OE}_{j,i}^{(a_{j-1})}+ \mbox{(lower terms)},$$
where each lower term has a smaller $i^-$-segment  (or $i^-_\uparrow$-column).
\item For $i,j\geq 2$ and $1\leq a_1\leq \cdots\leq a_{j-1}$, then there exists $g\in \q^{\mathbb Z}\sfK^{\mathbb Z^n}$ such that 
$$\f_{i-1}\cdots\f_{1}\overline{\sfK}_{1}\e_{1}^{(a_1)}\cdots\e_{j-1}^{(a_{j-1})}
=g{\OE}_{j-1,j}^{(a_{j-1}-a_{j-2})}\cdots{\OE}_{2,j}^{(a_{2}-a_{1})}{\OE}_{1,j}^{(a_{1}-1)}\overline{\OE}_{i,j}+ \mbox{(lower terms)},$$
where each lower term has a smaller $(j^+,\bar j)$-segment  (or $(\bar j_\uparrow,j^+_\downarrow)$-column).\footnote{Note that the ordering in $\vec A$ sectionally reverses the ordering used in defining a PBW basis.} 
\end{enumerate}
\end{lemma}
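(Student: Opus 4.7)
The plan is to prove the three parts by induction on $j-i$ and successive application of the commutation formulas collected in Lemma~\ref{comm formulas}. Parts (1) and (2) are symmetric (using (1$^+$) and (1$^-$) respectively), while part (3) combines (1) with a three-stage manipulation invoking (2), (3) and (4) of that lemma.

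For (1), the base case $j-i=1$ is immediate since $\sfE_i^{(a_i)}=\OE_{i,i+1}^{(a_i)}$. In the inductive step I would write
$$\sfE_i^{(a_i)}\cdots\sfE_{j-1}^{(a_{j-1})}=\bigl(\sfE_i^{(a_i)}\cdots\sfE_{j-2}^{(a_{j-2})}\bigr)\OE_{j-1,j}^{(a_{j-1})},$$
apply the inductive hypothesis to the left factor, and then transport $\OE_{j-1,j}^{(\cdot)}$ leftward past the factors $\OE_{k,j-1}^{(a_k-a_{k-1})}$ (with the convention $a_{i-1}:=0$) by iterated use of Lemma~\ref{comm formulas}(1$^+$). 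At the $k$-th step the required inequality $s\leq t$ reads $a_k-a_{k-1}\leq a_{j-1}-a_{k-1}$, i.e.\ $a_k\leq a_{j-1}$, which is furnished by monotonicity. Each error term in \eqref{prv} strictly decreases the $(k,j)$-entry in the $j^+_\downarrow$-column (at the cost of a new residue in the $(j-1)^+_\downarrow$-column, which lies in a later section of $\vec A$), so every lower term differs from the leading term at an entry of the $j^+_\downarrow$-column.

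Part (2) is proved by the analogous induction with Lemma~\ref{comm formulas}(1$^-$) in place of (1$^+$): I would peel $\sfF_{j-1}^{(a_{j-1})}=\OE_{j,j-1}^{(a_{j-1})}$ off the left, apply the inductive hypothesis to the remainder, and transport $\OE_{j,j-1}^{(a_{j-1})}$ rightward through the $\OE_{k,i}^{(\cdot)}$'s. For $k\leq j-2$ the supports $\{j,j-1\}$ and $\{k,i\}$ are disjoint, so the commutation is exact up to an absorbable $\up$-scalar and produces no error; only the final commutation with $\OE_{j-1,i}^{(a_{j-2})}$, whose reversed inequality $t=a_{j-1}\leq a_{j-2}=s$ is supplied by monotonicity, yields lower terms, and these strictly decrease the $(j,i)$-entry of the $i^-_\uparrow$-column.

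For part (3) I would proceed in three stages. First, invoke (1) to rewrite
$$\sfE_1^{(a_1)}\cdots\sfE_{j-1}^{(a_{j-1})}=\OE_{j-1,j}^{(a_{j-1}-a_{j-2})}\cdots\OE_{2,j}^{(a_2-a_1)}\OE_{1,j}^{(a_1)}+(\text{lower}).$$
Second, push $\overline{\sfK}_1$ rightward through this product using Lemma~\ref{comm formulas}(2): it commutes past each $\OE_{k,j}^{(m)}$ for $k\geq 2$, and upon reaching $\OE_{1,j}^{(a_1)}$ it produces
$$\up^{a_1}\sfK_1^{-1}\OE_{1,j}^{(a_1-1)}\overline{\OE}_{1,j}+\up^{a_1}\OE_{1,j}^{(a_1)}\overline{\sfK}_1;$$
the second summand places its odd generator in the $\bar 1$-column, strictly later in $\vec A$ than the $\bar j$-column hit by the first summand, so only the first contributes to the leading term. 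Third, premultiply by $\sfF_{i-1}\cdots\sfF_1$. By the \emph{otherwise} clause of Lemma~\ref{comm formulas}(3), each $\sfF_k$ commutes past the $\OE_{\ell,j}^{(e)}$'s with $\ell\neq k$ (since $k\leq i-1<j-1$ rules out $j=k+1$), while for $\ell=k$ the extra term strictly decreases the $(k,j)^0$-entry of the $j^+_\downarrow$-column. Once the $\sfF_k$'s arrive at the tail, iterated application of Lemma~\ref{comm formulas}(4) in the subcase $j'<k'$ gives
$$\sfF_k\overline{\OE}_{k,j}=\overline{\OE}_{k+1,j}\sfK_{k+1}^{-1}\sfK_k+\overline{\OE}_{k,j}\sfF_k,$$
whose second summand leaves the odd generator at row $k<i$, strictly later (read top-down) in the $\bar j_\uparrow$-column than the claimed leading position $(i,j)^1$, hence lower in $\preceq$. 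After $i-1$ such steps the odd factor has become $\overline{\OE}_{i,j}$, and the accumulated $\up$-powers and $\sfK$-factors gather into the prefactor $g\in\up^{\mathbb Z}\sfK^{\mathbb Z^n}$.

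The chief obstacle throughout is the bookkeeping of lower terms: at each invocation of Lemma~\ref{comm formulas} one must verify that the extra contributions really are smaller than the leading term at a position within the designated segment (the $j^+_\downarrow$-column for (1), the $i^-_\uparrow$-column for (2), or the $(\bar j_\uparrow,j^+_\downarrow)$-column for (3)), rather than merely at a later section of $\vec A$. This reduces in every case to the observation that commuting $\OE_{u,v}^{(s)}$ across $\OE_{v,w}^{(t)}$ strictly decreases the exponent of the new $(u,w)$-root vector while leaving a residue at $(u,v)$, and that $\overline{\sfK}_1$ and the $\sfF_k$'s interact with the even $\OE_{\cdot,j}^{(\cdot)}$'s and with $\overline{\OE}_{k,j}$ either by advancing the odd contribution strictly later within the $\bar j_\uparrow$-column or by shifting it into an entirely different column of $A^1$.
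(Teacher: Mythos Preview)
Your proofs of (1) and (2) by induction on $j-i$ using Lemma~\ref{comm formulas}(1$^\pm$) are correct and match the paper's strategy, which illustrates the three-factor case in detail and then appeals to induction. For (3) your route differs from the paper's: you push each $\sfF_k$ through individually via Lemma~\ref{comm formulas}(3) and then iterate Lemma~\ref{comm formulas}(4) step by step, whereas the paper first bundles $\sfF_{i-1}\cdots\sfF_1=\OE_{i,1}+(\text{lower})$ using part~(2) and then applies Lemma~\ref{comm formulas}(4) \emph{once} to $\OE_{i,1}\overline{\OE}_{1,j}$, treating the three subcases $i<j$, $i=j$, $i>j$ separately.

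Your argument for (3) has a gap: the assertion ``$k\leq i-1<j-1$ rules out $j=k+1$'' and the restriction to the subcase $j'<k'$ of Lemma~\ref{comm formulas}(4) both presuppose $i<j$, but the statement allows any $i,j\geq 2$ (and all three relations are needed in Proposition~\ref{The monomial basis}). When $i\geq j$, the step $k=j-1$ lands in the $k'=j'$ subcase of Lemma~\ref{comm formulas}(4), producing $\sfK_{j-1}\overline{\sfK}_j+\cdots$; for $i>j$ the steps $k\geq j$ land in the $j'>k'$ subcase, which carries the extra correction $(\up^{-1}-\up)\sfK_j\OE_{k+1,j}\overline{\sfK}_k$. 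Moreover $\sfF_{j-1}$ does not fall under the ``otherwise'' clause of Lemma~\ref{comm formulas}(3) when passing the $\OE_{\ell,j}^{(e)}$'s. In each of these omitted cases the leading contribution still advances the odd factor to $\overline{\OE}_{k+1,j}$ (or to $\overline{\OE}_{j,j}=\overline{\sfK}_j$), so your scheme can be completed by case analysis; the paper's device of collapsing $\sfF_{i-1}\cdots\sfF_1$ to $\OE_{i,1}$ via part~(2) and invoking Lemma~\ref{comm formulas}(4) once handles all three relations between $i$ and $j$ uniformly.
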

\begin{proof}Since $\OE_h=\sfE_{h,h+1}$, repeatedly applying Lemma \ref{comm formulas}($1^+$) together with \eqref{prv} yields
(1). More precisely, multiplying \eqref{prv} on the left by $\OE_{i,j}^{(r)}$ with $i<j$ and $r\leq s\leq t$ and noting \cite[Prop. 7.4(1)]{DW1} yields
$$
\aligned
\OE_{i,j}^{(r)}\OE_{j,k}^{(s)}\OE_{k,l}^{(t)}&=\OE_{k,l}^{(t-s)}(\OE_{i,j}^{(r)}\OE_{j,l}^{(s)})+\sum_{a=0}^{s-1}\up^{(r-b)(s-b)}
\OE_{k,l}^{(t-a)}(\OE_{i,j}^{(r)}\OE_{j,l}^{(a)})\OE_{j,k}^{(s-a)}\\
&=\OE_{k,l}^{(t-s)}\OE_{j,l}^{(s-r)}\OE_{i,l}^{(r)}+\sum_{b=0}^{r-1}\up^{(s-a)(t-a)}\OE_{k,l}^{(t-s)}\OE_{j,l}^{(t-b)}\OE_{i,l}^{(b)}\OE_{i,j}^{(r-b)}\\
&\quad\;+\sum_{a=0}^{s-1}\OE_{k,l}^{(t-a)}\bigg(\sum_{c=0}^{\text{min}(r,a)}\OE_{j,l}^{(a-c)}\OE_{i,l}^{(c)}\OE_{i,j}^{(r-c)}\bigg)\OE_{j,k}^{(s-a)}.
\endaligned
$$
Now, by applying \eqref{prv} to $\OE_{i,j}^{(r-c)}\OE_{j,k}^{(s-a)}$ one more time,
we obtain a linear combination of some PBW basis elements $\fkb^{A,\bf0}$ as given in Lemma \ref{The PBW basis}, where $A^1=0$ and $A^0$ is a strictly a upper triangular matrix. Clearly, the $l^+$-segment 
of every $\fkb^{A,\bf0}$ contains the leading entry of $\vec A$ and their corresponding $l^+_\downarrow$-column are $(r,s-r,t-s)$ for the first term and  $(b,t-b,t-s)$ or $(c,a-c,t-a)$ for the other terms, where
$0\leq b\leq r-1$, $0\leq a\leq s-1,0\leq c\leq\text{min}(r,a)$. Hence, the first term is clearly the leading or the largest term relative to the lexicographic order, i.e., the order $\preceq$, and the lower terms has the form (a smaller $l^+$-segment)$\fkc$. 
The general case can be seen similarly or by an inductive argument. This proves (1).

By Lemma \ref{comm formulas}($1^-$),
the proof for assertion (2) is similar.

It remains to prove (3). By (1), we have
\begin{equation}\label{eee}
\e_{1}^{(a_1)}\cdots\e_{j-1}^{(a_{j-1})}={\OE}_{j-1,j}^{(a_{j-1}-a_{j-2})}\cdots{\OE}_{2,j}^{(a_{2}-a_{1})}{\OE}_{1,j}^{(a_{1})}+ \mbox{ (lower terms)},
\end{equation}
where each lower term has a smaller $j^{+}$-segment.
Lemma \ref{comm formulas}(2) gives
$$\aligned
\overline{\sfK}_{1}{\OE}_{j-1,j}^{(a_{j-1}-a_{j-2})}&\cdots{\OE}_{2,j}^{(a_{2}-a_{1})}{\OE}_{1,j}^{(a_{1})}
={\OE}_{j-1,j}^{(a_{j-1}-a_{j-2})}\cdots{\OE}_{2,j}^{(a_{2}-a_{1})}\overline{\sfK}_{1}{\OE}_{1,j}^{(a_{1})}\\
&={\OE}_{j-1,j}^{(a_{j-1}-a_{j-2})}\cdots{\OE}_{2,j}^{(a_{2}-a_{1})}(\q^{a_1}\sfK_1^{-1}{\OE}_{1,j}^{(a_{1}-1)}\overline{\OE}_{1,j}+\up^{a_1}\OE_{1,j}^{(a_1)}\overline{\sfK}_1)\\
&=\q^{a_1}{\OE}_{j-1,j}^{(a_{j-1}-a_{j-2})}\cdots{\OE}_{2,j}^{(a_{2}-a_{1})}\sfK_1^{-1}{\OE}_{1,j}^{(a_{1}-1)}\overline{\OE}_{1,j}+\text{(a lower term).}
\endaligned
$$
Here the lower term is $\up^{a_1}{\OE}_{j-1,j}^{(a_{j-1}-a_{j-2})}\cdots{\OE}_{2,j}^{(a_{2}-a_{1})}\OE_{1,j}^{(a_1)}\overline{\sfK}_1$, which has a smaller $\bar j$-segment as $\ol{\sfK}_1=\ol{\OE}_{1,1}$ and $j>1$.
On the other hand, since each lower term $\fkb^{B,\bf0}$ in \eqref{eee} has a smaller $j^{+}$-segment, the calculation above shows that $\overline{\sfK}_1\fkb^{B,\bf0}$ has a smaller $(j^+,\bar j)$-segment (relative to the leading term). Hence,
$$\aligned
\overline{\sfK}_{1}&\e_{1}^{(a_1)}\cdots\e_{j}^{(a_{j-1})}=\q^{a_1}\fkb+ \mbox{ (lower terms)},\endaligned$$
where $\fkb={\OE}_{j-1,j}^{(a_{j-1}-a_{j-2})}\cdots{\OE}_{2,j}^{(a_{2}-a_{1})}\sfK_1^{-1}{\OE}_{1,j}^{(a_{1}-1)}\overline{\OE}_{1,j}$ is the leading term and every lower term has the form $\fkb'\fkc$ where $\fkb'$ is a $(j^+,\bar j)$-segment with a $(\bar j_\uparrow,j^+_\downarrow)$-column $\prec$ that for $\fkb$.  We now compute $\f_{i-1}\cdots\f_2\f_{1}\fkb$. By (2) above,
$$\f_{i-1}\cdots\f_2\f_{1}=\OE_{i,1}+(\text{lower terms with smaller $1^-$-segments}).$$
Then, by Lemma~\ref{comm formulas}(4),
\begin{equation}\label{monpbw3}\notag
\begin{aligned}
&\f_{i-1}\cdots\f_2\f_{1}\fkb=\q^{a_1-1}{\OE}_{j-1,j}^{(a_{j-1}-a_{j-2})}\cdots{\OE}_{2,j}^{(a_{2}-a_{1})}\sfK_1^{-1}{\OE}_{1,j}^{(a_{1}-1)}(\f_{i-1}\cdots\f_2\f_{1})\overline{\OE}_{1,j}\\
&=
\begin{cases}
\q^{a_1-1}{\OE}_{j-1,j}^{(a_{j-1}-a_{j-2})}\cdots{\OE}_{2,j}^{(a_{2}-a_{1})}\sfK_1^{-1}{\OE}_{1,j}^{(a_{1}-1)}{\sfK}_{1}\overline{\sfK}_{j}+ \mbox{ (lower terms)},             & i=j;\\
\q^{a_1-1}{\OE}_{j-1,j}^{(a_{j-1}-a_{j-2})}\cdots{\OE}_{2,j}^{(a_{2}-a_{1})}\sfK_1^{-1}{\OE}_{1,j}^{(a_{1}-1)}\overline{\OE}_{i,j}{\sfK}_{1}{\sfK}_{i}^{-1}+ \mbox{ (lower terms)}, & i<j;\\
\q^{a_1-1}{\OE}_{j-1,j}^{(a_{j-1}-a_{j-2})}\cdots{\OE}_{2,j}^{(a_{2}-a_{1})}\sfK_1^{-1}{\OE}_{1,j}^{(a_{1}-1)}{\sfK}_{1}{\sfK}_{j}\overline{\OE}_{i,j}+ \mbox{ (lower terms)}, & i>j.
\end{cases}\\
\end{aligned}
\end{equation}
The computation of $\f_{i-1}\cdots\f_2\f_{1}(\fkb'\fkc)=(\f_{i-1}\cdots\f_2\f_{1}\fkb')\fkc$ is similar. It has leading terms whose $(j^+,\bar j)$-segment is smaller than that of the leading term in $\f_{i-1}\cdots\f_2\f_{1}\fkb$.

Combining the two cases and noting \cite[(5.9)]{DW1} give
\begin{equation}\label{monpbw3}\notag
\begin{aligned}
&\f_{i-1}\cdots\f_{1}\overline{\sfK}_{1}\e_{1}^{(a_1)}\cdots\e_{j}^{(a_{j-1})}\\
&=
\begin{cases}
{\OE}_{j-1,j}^{(a_{j-1}-a_{j-2})}\cdots{\OE}_{2,j}^{(a_{2}-a_{1})}{\OE}_{1,j}^{(a_{1}-1)}\overline{\OE}_{j,j}+ \mbox{ (lower terms)},             & i=j;\\
{\OE}_{j-1,j}^{(a_{j-1}-a_{j-2})}\cdots{\OE}_{2,j}^{(a_{2}-a_{1})}{\OE}_{1,j}^{(a_{1}-1)}\overline{\OE}_{i,j}{\sfK}_{i}^{-1}+ \mbox{ (lower terms)}, & i<j;\\
{\OE}_{j-1,j}^{(a_{j-1}-a_{j-2})}\cdots{\OE}_{2,j}^{(a_{2}-a_{1})}{\OE}_{1,j}^{(a_{1}-1)}{\sfK}_{j}\overline{\OE}_{i,j}+ \mbox{ (lower terms)}, & i>j,
\end{cases}\\
\end{aligned}
\end{equation}
where  every lower term has a smaller $(j^+,\bar j)$-segment.
Now (3) follows from collecting the $\sfK$'s to the left.
\end{proof}
We are now ready to prove the promised monomial basis.
\begin{proposition}\label{The monomial basis}
The set 
\begin{equation}\label{monobasis}\frak M=\{\fkm^{A,\bfj}\mid A\in M_n(\bN|\bZ_2)',\bfj\in\Z^n\}\end{equation} defined in \eqref{MonBs}
forms a basis, a monomial basis,  for  $\qUq$.
\end{proposition}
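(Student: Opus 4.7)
The plan is to establish a ``unitriangular'' transition formula
\begin{equation*}
\fkm^{A,\mathbf{j}} = c_A\, \fkb^{A,\mathbf{j}'(A,\mathbf{j})} + \sum_{B\prec A} c_{A,B,\mathbf{j}}\, \fkb^{B,\mathbf{j}_B},
\qquad c_A \in \mathbb{Q}(\up)^\times,
\end{equation*}
between $\fkM$ and the PBW basis $\fkB$ of Lemma \ref{The PBW basis}, relative to the pre-order $\preceq$ on $M_n(\bN|\bZ_2)'$ from \eqref{prec}. Since the two indexing sets coincide and the map $\mathbf{j}\mapsto \mathbf{j}'(A,\mathbf{j})$ is, for each fixed $A$, a translation on $\mathbb{Z}^n$, the resulting transition matrix is triangular with invertible diagonal entries, which forces $\fkM$ to be a basis of $\qUq$.

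I would compute the leading PBW term of $\fkm^{A,\mathbf{j}}$ by decomposing it into the $\sfK$-segment $\sfK^{\mathbf{j}}$ followed by the column segments in the order recorded by $\vec A$, namely $(\bar n,n^+),(\ol{n-1},(n-1)^+),\ldots,(\bar 2,2^+),\bar 1,1^-,\ldots,(n-1)^-$, and then handle each segment using Lemma \ref{leater}. The $j^-$-segment $\fkF^0_j$ yields $\OE_A^{j^-}$ as its leading term by Lemma \ref{leater}(2). The $\bar 1$-segment $\fkF^1_{1,1}\cdots \fkF^1_{n,1}$ is handled by commuting the $\ol{\sfK}_1$'s rightward through the $\sfF_h$'s and invoking the defining formulas for $\ol{\OE}_{i,1}$, producing the leading term $\ol{\OE}_{1,1}^{a^1_{1,1}}\cdots \ol{\OE}_{n,1}^{a^1_{n,1}}$. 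For a $(\bar j,j^+)$-segment with $j\geq 2$, I would induct on $|\mathbf{c}_j^1|$, extracting one factor $\sfF_{i-1}\cdots\sfF_1 \ol{\sfK}_1$ at a time from the leftmost nonzero $\fkF^1_{i,j}$ and invoking Lemma \ref{leater}(3); the cumulative exponent $a^0_{1,j}+|\mathbf{c}_j^1|$ in $\sfE_1^{(a^0_{1,j}+|\mathbf{c}_j^1|)}\subset \fkE^0_{j-1}$ is designed precisely so that each extracted $\ol{\sfK}_1$ consumes one unit of the $\OE_{1,j}$-power and deposits one odd root vector $\ol{\OE}_{i,j}$ to the right, leaving $a^0_{1,j}$ units for the final PBW segment $\OE_A^{j^+,\bar j}$. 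After $|\mathbf{c}_j^1|$ such steps, the leading term of this segment equals $g_j\,\OE_A^{j^+,\bar j}$ for some $g_j\in \up^{\mathbb Z}\sfK^{\mathbb Z^n}$.

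Multiplying the segment-wise leading terms in the stated order and using \cite[(5.9)]{DW1} to move the accumulated $\sfK$-factors to the far left assembles $c_A\,\fkb^{A,\mathbf{j}'(A,\mathbf{j})}$, with $\mathbf{j}'(A,\mathbf{j})=\mathbf{j}+\mathbf{j}''(A)$ for some $\mathbf{j}''(A)\in\mathbb{Z}^n$ independent of $\mathbf{j}$. The hard part will be showing that every remaining summand in the expansion is strictly $\prec\fkb^{A,\mathbf{j}'(A,\mathbf{j})}$ in $\preceq$. The key observation is that each ``lower term'' produced by Lemma \ref{leater} in a given column-segment differs from that segment's leading PBW form only within that column, while the segments to its left remain intact; when such a lower term is then multiplied by leading or lower pieces of subsequent column-segments, the commutation formulas in Lemma \ref{comm formulas} only create PBW basis elements whose $\vec{(\cdot)}$-sequences agree with the leading one through the columns already processed. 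Hence the earliest column at which the entries drop below those of $A$ survives all subsequent multiplications, and the lexicographic comparison preserves the strict inequality.

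With the triangular formula in hand, a standard downward induction on $A$ in the pre-order $\preceq$ (which restricts to a total order on $M_n(\bN|\bZ_2)'$) together with the bijectivity of $\mathbf{j}\mapsto\mathbf{j}'(A,\mathbf{j})$ for each fixed $A$ shows that $\fkM$ both spans $\qUq$ over $\mathbb{Q}(\up)$ and is linearly independent, completing the proof.
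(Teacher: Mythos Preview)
Your outline matches the paper's approach almost exactly: both proceed segment by segment using Lemma~\ref{leater}, and both aim for a triangular transition to the PBW basis $\fkB$ with respect to the order $\preceq$. Your observations that the $j^-$-segments are handled by Lemma~\ref{leater}(2), the $(\bar j,j^+)$-segments by iterating Lemma~\ref{leater}(3), and that the $\sfK$-factors can be collected via \cite[(5.9)]{DW1} into a translation $\bfj\mapsto\bfj+\bfj''(A)$ are all correct and appear in the paper's proof.

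There is, however, a genuine gap at exactly the point you flag as ``the hard part''. You assert that when a lower term (of the form $(j^+,\bar j)$-segment $\cdot\,\OG$) is multiplied by the subsequent column-segments $\Pi$, ``the commutation formulas in Lemma~\ref{comm formulas} only create PBW basis elements whose $\vec{(\cdot)}$-sequences agree with the leading one through the columns already processed''. But Lemma~\ref{comm formulas} lists a handful of specific commutation relations among root vectors; it does not by itself say that multiplying $\OG$ by $\Pi$ cannot manufacture new column-$j$ root vectors, which would then have to be commuted leftward past the existing $(j^+,\bar j)$-segment and would alter it. You are claiming a structural fact about the PBW expansion of $\OG\cdot\Pi$ without supplying the mechanism.

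The paper's proof closes this gap with a different observation: the ``remaining part'' $\OG$ of any lower PBW term, as well as the later monomial factor $\Pi=\prod_{j'<j}(\fkF_{1,j'}^1\cdots\fkF_{n,j'}^1\fkE_{j'-1}^0)\cdot\fkF_1^0\cdots\fkF_{n-1}^0$, contains no occurrence of the generator $\sfE_{j-1}$. Since every root vector $\OE_{i,j}$ (and hence, via (QQ3)--(QQ4), every $\ol{\OE}_{i,j}$ and $\ol{\sfK}_j$) requires $\sfE_{j-1}$ in its recursive definition, the PBW expansion of $\OG\cdot\Pi$ cannot contain any column-$j$ root vector. Consequently the $(j^+,\bar j)$-segment of the lower term is untouched by the subsequent multiplication, and its strict deficiency relative to $\OE_A^{j^+,\bar j}$ persists. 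You should replace your appeal to Lemma~\ref{comm formulas} with this argument (or an equivalent one) to make the proof complete.
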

\begin{proof} For $1\leq j\leq n-1$, by Lemma \ref{leater}(2),
$$\aligned
\fkF_j^0&=\sfF_{n-1}^{(a_{n,j}^0)} \sfF_{n-2}^{(a_{n-1,j}^0+a_{n,j}^0)}\cdots \sfF_j^{(\sum_{j+1\leq s\leq n}a_{s,j}^0)}\\
&=\OE_{j+1,j}^{(a^0_{j+1,j})}\OE_{j+2,j}^{(a^0_{j+2,j})}\cdots \OE_{n,j}^{(a^0_{n,j})}+(\text{lower terms})\\
&=\OE_A^{j^-}+(\text{lower terms}),\endaligned$$
where each lower term has a smaller $j^-$-segment. If we put
$\OE_A^-=\OE_{A}^{1^-}\OE_A^{2^-}\cdots \OE_{A}^{(n-1)^-}$, then
\begin{equation}
\begin{aligned}
\fkF_{1}^0\fkF_2^0\cdots\fkF_{n-1}^0=\OE_{A}^{-}+ \mbox{ (lower terms)}.
\end{aligned}
\end{equation}
Here the matrix associated with each lower term is lower triangular for the even part and 0 for the odd part.

On the other hand, 
putting $\OE_A^{n^+,\bar n}:=\OE_{n-1,n}^{(a_{n-1,n}^0)}\cdots \OE_{2,n}^{(a_{2,n}^0)}\OE_{1,n}^{(a_{1,n}^0)}\overline{\OE}_{1,n}^{a_{1n}^1}\cdots\overline{\OE}_{n,n}^{a_{nn}^1}$ as in \eqref{EAj+}, 
by Lemma \ref{leater}(3) 
$$
\begin{aligned}
\fkF_{1,n}^1\fkF_{2,n}^1\cdots \fkF_{n,n}^1\fkE_{n-1}^0&=\alpha \OE_A^{n^+,\bar n}+ \mbox{ (lower terms $\fkb^{B,\bfj}$)},\\
\end{aligned}
$$
where $\alpha\in \q^{\mathbb Z}\sfK^{\mathbb Z^n}$ is a constant and each lower term $\fkb^{B,\bfj}$ has the form
$$
{\mathfrak b}^{B,\bfj}=\sfK^\bfj \OE_{n-1,n}^{(b_{n-1,n}^0)}\cdots \OE_{2,n}^{(b_{2,n}^0)}\OE_{1,n}^{(b_{1,n}^0)}\overline{\OE}_{1,n}^{b_{1n}^1}\cdots\overline{\OE}_{n,n}^{b_{nn}^1}
\cdot\OG.
$$
with $B\prec A_{n^+,\bar n}$, where $\OG$ denotes  the remaining part of  this  lower term and 
$A_{n^+,\bar n}$ is the matrix whose $n^+$ and $\bar n$ columns are the same as in $A$ and the rest are zeros. In other words, 
$\fkb^{A_{n^+,\bar n},\bfl}=\OE_A^{n^+,\bar n}.$

Observe that every recursively defined root vector $\OE_{i,n}$ involves the generator $\sfE_{n-1}=\OE_{n-1,n}$ and, without $\sfE_{n-1}$, ${\sfK}_{\bar n}$ cannot be generated from (QQ4) and nor ${\sfE}_{\bar n}$ from (QQ3). Since $\sfE_{n-1}$ does not appear in $\OG$ and nor in 
$$\Pi:=\prod_{j=1}^{n-1}(\fkF_{1,n-j}^1\fkF_{2,n-j}^1\cdots \fkF_{n,n-j}^1\fkE_{n-j-1}^0),$$ it follows that
when $\OG\cdot\Pi$ is written as a linear combination of PBW basis elements, the root vectors $\OE_{i,n}, \overline{\OE}_{j,n}$, $i<n, j\leq n$, will not
appear. Hence, each of them has no $(n^+,\bar n)$-segment. Thus, every term in
$$\OE_{n-1,n}^{(b_{n-1,n}^0)}\cdots \OE_{2,n}^{(b_{2,n}^0)}\OE_{1,n}^{(b_{1,n}^0)}\overline{\OE}_{1,n}^{b_{1n}^1}\cdots\overline{\OE}_{n,n}^{b_{nn}^1}\cdot \OG\cdot\Pi $$
is less than every term in $\OE_A^{n^+,\bar n}\cdot\Pi$, and so the leading term must occur in the latter. 
By induction, we conclude 
\begin{equation*}
\prod_{j=1}^n(\fkF_{1,n-j+1}^1\fkF_{2,n-j+1}^1\cdots \fkF_{n,n-j+1}^1\fkE_{n-j}^0)
=\beta_{A}\OE^{n^+,\bar n}_{A}\cdots\OE^{1^+,\bar 1}_{A}+ \mbox{ (lower terms)}, 
\end{equation*}
for some $ \beta_{A}\in \q^{\mathbb Z}\sfK^{\mathbb Z^n}$. Finally,
\begin{equation}\notag
\begin{aligned}
\fkm^{A,\bfj}&=\sfK^\bfj (\beta_{{A}} \OE^{n^+,\bar n}_{A}\cdots\OE^{1^+,\bar 1}_{A}+\mbox{ lower terms})(\OE_{A}^-+ \mbox{ lower terms})\\
&=\beta_{A} \fkb^{A,\bfj}+ \mbox{ (lower terms)}, 
\end{aligned}
\end{equation}
Thus, by Lemma \ref{The PBW basis},
the set
$
\mathfrak{M}':=\{\beta_{A}^{-1} \fkm^{A,\bfj}  \mid A\in
M_n(\bN|\bZ_2)',\,\bfj\in \mathbb Z^{n}\}
$
forms a basis for  $\qUq$. By writing $\beta_A=\up^{e_A}\sfK^{\bfj_A}$, for some $e_A\in\Z$ and 
$\bfj_A\in\Z^n$, we have
$
\mathfrak{M}'=\{\up^{-e_A} \fkm^{A,\bfj-\bfj_A}  \mid A\in
M_n(\bN|\bZ_2)',\,\bfj\in \mathbb Z^{n}\}
$. Hence, $\{\fkm^{A,\bfj-\bfj_A}  \mid A\in
M_n(\bN|\bZ_2)',\,\bfj\in \mathbb Z^{n}\}
$, forms a basis which is exactly $\mathfrak M=
\{ \fkm^{A,\bfj}  \mid A\in
M_n(\bN|\bZ_2)',\,\bfj\in \mathbb Z^{n}\}
$.
\end{proof}
Parallel to Proposition \ref{ZbasisB}, we have the following.
\begin{corollary}\label{ZbasisM}The set
$$\mathfrak M_\sZ=\left\{\sfK^\tau \left[\begin{matrix}\sfK\\\la\end{matrix}\right]\mathfrak m^{A,\bfl}\bigg| A\in M_n(\N|\Z_2)', \la\in\mathbb N^n, \tau\in(\Z_2)^n\right\}$$
forms a $\sZ$-basis for the Lusztig form $\qUqZ$.
\end{corollary}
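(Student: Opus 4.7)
The plan is to mirror the proof of Proposition~\ref{ZbasisB}, turning the triangular relation established inside the proof of Proposition~\ref{The monomial basis} into an integral change of basis. First I would verify the containment $\mathfrak M_\sZ \subseteq \qUqZ$: every factor appearing in $\fkm^{A,\bfl}$ (see \eqref{Fij}, \eqref{MonBs0}, \eqref{MonBs}) is one of the integral generators $\sfE_h^{(m)}, \sfF_h^{(m)}, \sfK_{\bar 1}, \sfK_h^{\pm 1}$ listed in \eqref{Z-generators}, while $\sfK^\tau\left[\begin{matrix}\sfK\\\la\end{matrix}\right]$ belongs to $\qUqZ$ by the very definition of the Lusztig form.

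For $\sZ$-linear independence of $\mathfrak M_\sZ$, I would invoke the standard Kostant--Lusztig fact that in $\Q(\up)[K^{\pm 1}]$ the set $\{K^\tau\left[\begin{matrix}K;0\\t\end{matrix}\right] : \tau \in \{0,1\},\, t \in \N\}$ is a $\Q(\up)$-basis (these vectors triangularly relate to the Laurent basis $\{K^j\}_{j \in \Z}$ with unit leading coefficients in $\sZ$). Taking the product across $i = 1,\ldots, n$, the set $\{\sfK^\tau\left[\begin{matrix}\sfK\\\la\end{matrix}\right] : \tau \in \Z_2^n,\, \la \in \N^n\}$ is $\Q(\up)$-linearly independent, and combining this with Proposition~\ref{The monomial basis} yields that $\mathfrak M_\sZ$ is $\Q(\up)$-linearly independent in $\qUq$, hence $\sZ$-linearly independent.

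For spanning, the key input is the triangular relation derived in the proof of Proposition~\ref{The monomial basis}:
$$\fkm^{A,\bfj} \;=\; \beta_A\, \fkb^{A,\bfj} \;+\; \sum_{B \prec A} c_{B,A}(\up,\sfK)\, \fkb^{B,\bfj_{B}},$$
with $\beta_A = \up^{e_A}\sfK^{\bfj_A}$ a unit in $\sZ[\sfK^{\pm 1}]$. Inverting this upper-triangular system over $\sZ[\sfK^{\pm 1}]$ (by induction on $A$ under $\preceq$) expresses each $\fkb^{A,\bfj}$ as a $\sZ[\sfK^{\pm 1}]$-linear combination of monomials $\fkm^{C,\bfj''}$ with $C \preceq A$. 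Combined with the componentwise $\sZ$-integral change of basis between $\{\sfK^\bfj : \bfj \in \Z^n\}$ and $\{\sfK^\tau\left[\begin{matrix}\sfK\\\la\end{matrix}\right] : \tau \in \Z_2^n,\, \la \in \N^n\}$, this shows that every element $\sfK^\tau\left[\begin{matrix}\sfK\\\la\end{matrix}\right]\fkb^{A,\bfl}$ of $\mathfrak B_\sZ$ lies in $\sZ\text{-span}(\mathfrak M_\sZ)$. By Proposition~\ref{ZbasisB}, $\qUqZ = \sZ\text{-span}(\mathfrak B_\sZ) \subseteq \sZ\text{-span}(\mathfrak M_\sZ)$, and the reverse inclusion is the first step.

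The principal obstacle I anticipate is the coefficient bookkeeping: one must verify that each scalar $c_{B,A}$ in the triangular expansion actually lies in $\sZ[\sfK^{\pm 1}]$, rather than merely in $\Q(\up)[\sfK^{\pm 1}]$. This forces a careful re-examination of the iterated applications of Lemma~\ref{comm formulas} and of~\eqref{prv}, checking that all denominators $[m]!$ are absorbed into the divided powers $\OE_{i,j}^{(m)}$ already in sight, and that the surviving scalars are genuine Laurent polynomials in $\up$. Once this technical point is settled, the remaining upper-triangular inversion and Kostant--Lusztig reindexing are routine.
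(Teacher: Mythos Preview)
Your approach is correct and is exactly the argument the paper has in mind (the corollary is stated without proof, as the parallel to Proposition~\ref{ZbasisB} via the triangular relation of Proposition~\ref{The monomial basis} is regarded as routine).

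One simplification: the obstacle you flag at the end has an automatic resolution that avoids re-examining Lemma~\ref{comm formulas}. Since each $\fkm^{A,\bfl}$ is a product of the integral generators in~\eqref{Z-generators}, it lies in $\qUqZ$; Proposition~\ref{ZbasisB} then forces its expansion in $\mathfrak B_\sZ$ to have $\sZ$-coefficients. The triangularity in $A$, together with the form $\beta_A=\up^{e_A}\sfK^{\bfj_A}$ of the leading coefficient, is already established over $\Q(\up)$ in the proof of Proposition~\ref{The monomial basis}, so integral triangularity follows for free. Because left multiplication by $\sfK^{\bfj_A}$ is an invertible $\sZ$-linear map on the Kostant basis $\{\sfK^\tau\big[\begin{smallmatrix}\sfK\\\la\end{smallmatrix}\big]\}$ of the zero part (as $\sfK_i^{-1}=\sfK_i-(\up-\up^{-1})\big[\begin{smallmatrix}\sfK_i\\1\end{smallmatrix}\big]\in\qUqZ$), the diagonal blocks of the transition matrix $\mathfrak M_\sZ\to\mathfrak B_\sZ$ are $\sZ$-invertible, and your upper-triangular inversion goes through without further bookkeeping.
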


\section{The $\qUq$-supermodule $\SW$}
We now aims at the construction of the regular representation of $\qUq$. This will be a three step construction. First, we construct a deformed polynomial superalgebra $\SW$ on which $\qUq$ acts so that we obtain a $\qUq$-supermodule. We then extend in next section this action to the $n$-fold tensor product $\SW^{\otimes n}$, which can be regarded again a deformed polynomial superalgebra, and extend further in \S7 to its formal power series algebra in which we construct subspace $\sVn$ together with some explicit action formulas on a basis by generators of $\qUq$. From \S8 onwards, we prove that $\sVn$ is isomorphic to the regular representation of $\qUq$ and, thus, give new realisations for both $\qUq$ and the queer $q$-Schur algebras.

Let $n$ be a positive integer and write $[1,n]:=\{1,2,\ldots,n\}$. As in \eqref{bar on i}, put $\ol {i}=n+i$ for all $i\in[1,n]$. We will identify the set
$\{1,2,\ldots,n,\bar1,\bar2,\ldots,\bar n\}$ with $[1,2n]$. Let $\Z_2=\{0,1\}$. We always regard
$\Z_2$ as a subset of $\N$ unless it is used to describe a superspace, where $\Z_2$ is an abelian group of order 2.

Define the parity function on $[1,2n]$
\begin{equation}
p:[1,2n]\longrightarrow \mathbb Z_2,\;p(i)=0,p(\ol{i})=1 \,\forall i\in[1,n].
\end{equation}

\begin{definition}
Let $\SW=\mathscr A_{\mathbb Q(\up)}[X_1,\ldots,X_n,X_{\bar1},\ldots,X_{\bar n}]$ be the superalgebra over $\mathbb Q(\q)$ with
$$\aligned
\text{even generators:}& \;X_1,\cdots, X_{n},\\
\text{odd generators:} &\;X_{\bar 1},\cdots, X_{\bar n},
\endaligned$$
and relations:
\begin{equation}
\begin{aligned}
(1)\quad&X_iX_j=X_jX_i,\    X_iX_{\bar j}=X_{\bar j}X_i,\label{suppol}\\
(2)\quad&X_{\bar i}X_{\bar j}=-X_{\bar j}X_{\bar i}\;(i\neq j),\;X_{\bar i}^{2} =\frac{\q-\q^{-1}}{\q+\q^{-1}}X_{i}^{2}.
\end{aligned}
\end{equation}
\end{definition}
Note that, if we specialise $\up$ to 1, then $\mathscr A_1(n)$ is isomorphic to the tensor product of the  polynomial algebra $\mathbb Q[X_1,\ldots,X_n]$ and the exterior (super)algebra $\Lambda_{\mathbb Q}(X_{\bar 1},\cdots, X_{\bar n})$. Since $X_{\ol i}^2=0$ in $\mathscr A_1(n)$, this algebra $\mathscr A_1(n)$ is known as a polynomial superalgebra. The superalgebra $\SW$ is a deformed polynomial superalgebra and may be regarded as the algebra over $\mathbb Q(\up)[X_1,\ldots,X_n]$ with generators $X_{\bar 1},\cdots, X_{\bar n}$ and relations \eqref{suppol}(2). We  will call $\SW$ the {\it queer polynomial superalgebra} in the sequel.

For $\bfa=(a_1,\cdots, a_{n},a_{\bar 1},\cdots, a_{\bar n})\in \mathbb N^{n}\times \mathbb N^{n},$ set

\begin{equation}\label{monomial}
\begin{aligned}
& X^{\bfa}=X_{1}^{a_{1}}\cdots
 X_{n}^{a_{n}}X_{\bar 1}^{a_{\bar 1}}\cdots
 X_{\bar n}^{a_{\bar n}},\\
&  X^{[\bfa]}=X_{1}^{(a_{1})}\cdots
 X_{n}^{(a_{n})}X_{\bar 1}^{a_{\bar 1}}\cdots
 X_{\bar n}^{a_{\bar n}}.
\end{aligned}
\end{equation}
These monomials have parity
\begin{equation}\label{parity1}
p(X^{\bfa})=p(X^{[\bfa]})=p(\bfa):={a_{\bar 1}+\cdots +a_{\bar n}}\ (\mod 2).
\end{equation}
We also have the usual degree function:
$$\deg(X^\bfa)=|\bfa|:=\sum_{i=1}^{n}(a_i+a_{\bar i})\quad\text{for all}\quad \bfa\in\N\times\Z_2.$$
\begin{lemma} (1) The sets
$$
\sX:=\{X^{\bfa} \mid
  \bfa\in \mathbb N^{n}\times \mathbb Z_2^{n} \}  \mbox{  and  }
\{X^{[\bfa]} \mid
  \bfa\in \mathbb N^{n}\times \mathbb Z_2^{n} \}
$$
form bases for $\SW$.
 
 (2) For $i\in\mathbb Z_2,r\in\mathbb N$, let
 $$\SW_i=\text{\rm span}\{X^\bfa\in\sX\mid p(\bfa)=i\},\quad\mathscr A_\up(n,r)=\text{\rm span}\{X^\bfa\in\sX\mid \deg(X^\bfa)=r\}.$$
Then there are two grading structures on $\SW$: 
$$\SW=\SW_0\oplus\SW_1 =\bigoplus_{r\geq0}\mathscr A_\up(n,r).$$
 \end{lemma}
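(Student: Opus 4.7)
The plan is to prove (1) by first showing that $\sX$ spans $\SW$ through a term-rewriting argument and then establishing linear independence via the Diamond lemma of Bergman. Because each $X^{[\bfa]}$ differs from $X^\bfa$ only by the nonzero scalar $1/\prod_{i=1}^n [a_i]!$ in $\mathbb Q(\up)$, the two basis claims in (1) are equivalent, so it suffices to treat $\sX$. Part (2) will then follow from the observation that every defining relation is homogeneous of parity $0$ and of total degree $2$.

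For the spanning claim, I would start from an arbitrary word in the free superalgebra on $\{X_i,X_{\bar j}\}$ and describe a rewriting system using the relations in \eqref{suppol}: apply $X_j X_i \to X_i X_j$ (for $j>i$) and $X_{\bar j}X_i \to X_iX_{\bar j}$ to move even generators to the left and sort them; apply $X_{\bar j}X_{\bar i}\to -X_{\bar i}X_{\bar j}$ for $i<j$ to sort the odd generators with appropriate signs; and apply the square rule $X_{\bar i}^2 \to \frac{\up-\up^{-1}}{\up+\up^{-1}}X_i^2$ to eliminate repeated odd generators. Termination follows from the lexicographic ordering on pairs (number of odd-generator occurrences, inversion count under the above sorts): the square rule strictly decreases the first coordinate, while the sorting rules preserve it and decrease the second. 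The irreducible monomials under this system are exactly the elements of $\sX$.

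For linear independence, I would apply Bergman's Diamond lemma, which requires verifying that every overlap ambiguity between the rewriting rules is resolvable. The nontrivial overlaps are as follows. (i) The cubic overlap $X_{\bar i}^{3}$ produced by two applications of the square rule: both reductions yield $\frac{\up-\up^{-1}}{\up+\up^{-1}} X_{\bar i}X_i^2$. (ii) The overlap $X_{\bar i}^2 X_{\bar j}$ with $i\neq j$, where applying the square rule first gives $\frac{\up-\up^{-1}}{\up+\up^{-1}}X_i^2 X_{\bar j} = \frac{\up-\up^{-1}}{\up+\up^{-1}}X_{\bar j}X_i^2$ after commutation, while applying the anti-commutation rule twice gives $X_{\bar j}X_{\bar i}^2 = \frac{\up-\up^{-1}}{\up+\up^{-1}}X_{\bar j}X_i^2$. (iii) The overlap $X_{\bar i}^2 X_k$ between the square rule and the even--odd commutation is immediate. (iv) The standard three-term overlap $X_{\bar i}X_{\bar j}X_{\bar k}$ among distinct odd generators is the classical sign-confluence of anti-commutators. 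All ambiguities being resolvable, the Diamond lemma implies that $\sX$ is a basis.

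The main obstacle is the interaction between the quadratic relation $X_{\bar i}^2 = \frac{\up-\up^{-1}}{\up+\up^{-1}}X_i^2$ and the anti-commutation relations: although the square rule strictly increases the number of even-generator factors, one must still guarantee termination and check that no new ambiguities arise beyond those listed. Both concerns are handled by the fact that the count of odd-generator occurrences is a strictly decreasing monovariant for the square rule and is preserved by all other rewrites. For part (2), both the parity function $p$ and the total degree function assign equal values to the two sides of each relation in \eqref{suppol}, so the $\Z_2$-grading and the $\N$-grading on the free superalgebra descend to $\SW$; the two decompositions stated in the lemma then follow immediately from the basis $\sX$ by partitioning according to $p(\bfa)$ and $|\bfa|$.
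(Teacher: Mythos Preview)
The paper states this lemma without proof, treating it as an elementary structural fact about the presentation \eqref{suppol}. Your Diamond-lemma argument is correct and supplies the details the paper omits: the rewriting system you describe terminates for the reason you give, the overlap ambiguities you list are the only nontrivial ones and they all resolve, and the passage from $\sX$ to $\{X^{[\bfa]}\}$ via the nonzero scalar $\prod_i[a_i]^!\in\mathbb Q(\up)$ is fine. Part (2) follows exactly as you say, since every relation in \eqref{suppol} is homogeneous of parity $0$ and total degree $2$. In short, your approach is more explicit than the paper's (non-)proof, but there is no discrepancy to flag: the authors simply regard this as a routine PBW-type statement for which a Diamond-lemma verification like yours is the expected justification.
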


We now introduce some linear maps in the (super) subalgebra $\scrL$ of the algebra $\text{End}_{\mathbb Q(\up)}(\SW)$ defined by
\begin{equation}\aligned
\scrL&=\scrL_0\oplus\scrL_1,\; \text{ where}\\
\scrL_i&=\{f\in \text{End}_{\mathbb Q(\up)}(\SW)\mid f(\SW_j)\subseteq \SW_{j+i}\;\forall j\in\mathbb Z_2\}.
\endaligned
\end{equation}

For all $i\in[1, 2n]$,
define {\it quantum differential (or $\up$-differential) operators}
$$
\partial_i:\SW\longrightarrow \SW $$
by setting, for $X^\bfa\in\sX$,
\begin{equation}\label{partial}
\partial_{i}(X^\bfa)=
\begin{cases}[a_i]X^{\bfa-\bfe_i}, &\text{ if }i\in[1,n];\\
(-1)^{\sum_{n+1\leq j<i}a_{j}}[a_i]X^{\bfa-\bfe_i}, &\text{ if }i\in[n+1,\ 2n].
\end{cases}
\end{equation}
Note that, if $a_i=0$, then $\partial_{i}(X^\bfa)=[0]X^\bfa=0$. 
Note also that $\partial_i$ satisfies the super quantum derivative rule
$$\aligned
\partial_i(ab)&=\partial_i(a)\de_i(b)+(-1)^{p(a)p(i)}\de_i^{-1}(a)\partial_i(b)\\
&=\partial_i(a)\de_i^{-1}(b)+(-1)^{p(a)p(i)}\de_i(a)\partial_i(b),
\endaligned$$
where $\de_i^{\pm1}$ is the linear isomorphism
$$\de_i^{\pm1}:\SW\longrightarrow \SW,\;\de_i^{\pm1}(X^\bfa)\longmapsto\up^{\pm a_i}X^\bfa.$$
Clearly, $\partial_1,\ldots,\partial_n\in\scrL_0$ and $\partial_{\bar 1},\ldots,\partial_{\bar n}\in\scrL_1$.

We also define, for $i\in[1,n], j\geq0$,
$$\partial_{i}^{(j)}:\SW\longrightarrow \SW \text{ by setting } \partial_{i}^{(j)}(X^\bfa)=[a_i+j]X^{\bfa-\bfe_i},$$
and define the linear maps $ \chi _1,\ldots, \chi _n\in\scrL_0$ and $ \chi _{\bar 1},\ldots, \chi _{\bar n}\in\scrL_1$ as the multiplication by $X_i$:
\begin{equation}\label{chi}
\chi_i:\SW\longrightarrow \SW, \;a\longmapsto X_ia,\text{ for all }a\in\SW.
\end{equation}
Note that, for a basis element $X^\bfa$,
$$
\chi_i(X^\bfa)=X_iX^\bfa=\begin{cases}X^{\bfa+\bfe_i}, &\text{ if }i\in[1,n];\\
(-1)^{\sum_{n+1\leq j<i}a_{j}}X^{\bfa+\bfe_i}, &\text{ if }i\in[n+1,2n].
\end{cases}$$
We also 
need the following {\it signed identity maps}:
$$s_{\ol i}:\SW\longrightarrow\SW,\;\;X^\bfa\longmapsto (-1)^{a_{\ol i}}X^\bfa,\;\;i\in[1,n].$$


The following subspace decompositions will be useful below for checking certain commutation formulas and relations: for $i\in[1,n]$,
\begin{equation}\label{Abij}
\SW=\begin{cases}\scrA_{i,0}\oplus\scrA_{i,1},&\\
\scrA_{\bar i,0}\oplus\scrA_{\bar i,1},&\\
\end{cases}\quad
\text{where}\quad
\aligned
\scrA_{i,0}&=\text{span}\{X^\bfa\in\sX\mid a_{i}=0\},\\
\scrA_{i,1}&=\text{span}\{X^\bfa\in\sX\mid a_{i}\geq1\},\\
\scrA_{\bar i,j}&=\text{span}\{X^\bfa\in\sX\mid a_{\ol i}=j\}\, (j\in\mathbb Z_2).\\
\endaligned
\end{equation}
For a linear map $f$ on $\SW$, $f|_{\scrA_{i,j}}$ denotes its restriction to $\scrA_{i,j}$. 
We may also define the projection map onto $\scrA_{\bar i,j}$ via the decomposition $\SW=\scrA_{\bar i,0}\oplus\scrA_{\bar i,1}$.
\begin{lemma}\label{opecom1}The following commutation relations hold in $\mathscr L$.
\begin{enumerate}
\item  For all $i,j\in[1,2n],i\neq j$,
$\partial_{i}\partial_{j}=(-1)^{p(i)p(j)}\partial_{j}\partial_{i}$, $ \chi _{i} \chi _{j}=(-1)^{p(i)p(j)}\chi _{j} \chi _{i}$ and, for all $i\in[1,n]$, $\partial_{\ol i}^2=0$, $\chi_{\ol i}^2=\frac{\up-\up^{-1}}{\up+\up^{-1}}\chi_i^2$.

\item For all $i,j\in[1,2n]$, $\partial_i  \de_j=\begin{cases}\de_j  \partial_i,&\text{ if }i\neq j;\\ \up\de_i  \partial_i,&\text{ if }i=j.\end{cases}$

\item  For all $i,j\in[1,2n]$ with $j\neq i$ or $j\neq\bar i$ when $i\in[1,n]$, $\chi_j  \partial_i=(-1)^{p(i)p(j)}\partial_i  \chi_j$, and, for all $i\in[1,n]$,
\begin{enumerate}
\item $\partial_i  \chi_i=\chi_i\parb_{i}^{(1)}$; $\chi_{\ol i}  \partial_{\ol i}$ (resp., $\partial_{\ol i}   \chi_{\ol i}$) is the projection map onto $\scrA_{\ol i,1}$ (resp., $\scrA_{\ol i,0}$), and
$\chi_{\ol i}  \partial_{\ol i}+\partial_{\ol i}   \chi_{\ol i}=1$.
\item $\partial_i  \chi_{\bar i}|_{\scrA_{\ol{i},0}}=\chi_{\bar i}  \partial_i|_{\scrA_{\ol{i},0}}$ and $\partial_i  \chi_{\bar i}|_{\scrA_{\ol{i},1}}=\chi_{\bar i}  \partial_i^{(2)}|_{\scrA_{\ol{i},1}}$.
\end{enumerate}

\item For all $i,j\in[1,2n]$ with $j\neq i$ or $j\neq\bar i$ when $i\in[1,n]$, we have
 $\chi_j  \de_i=\de_i  \chi_j$, and, for $i\in[1,n]$,
\begin{enumerate}
\item $\chi_i  \de_i=\up^{-1}\de_i  \chi_i$; $\chi_{\ol i}  \de_{\ol i}|_{\scrA_{\ol{i},0}}=\up^{-1} \de_{\ol i}  \chi_{\ol i}|_{\scrA_{\ol{i},0}}$, $\chi_{\ol i} \de_{\ol i}|_{\scrA_{\ol{i},1}}=\up \de_{\ol i}  \chi_{\ol i}|_{\scrA_{\ol{i},1}}$;
\item $\chi_{\ol i}  \de_{i}|_{\scrA_{\ol{i},0}}=\de_{i}  \chi_{\ol i}|_{\scrA_{\ol{i},0}}$, $\chi_{\ol i}  \de_{i}|_{\scrA_{\ol{i},1}}=\up^{-2} \de_{i}  \chi_{\ol i}|_{\scrA_{\ol{i},1}}$. 
\end{enumerate}
\item For $i\in[1,n]$,\vspace{-1ex}
\begin{equation}\label{opecom2}
\begin{aligned}
(\mathrm a)&\quad\sbi\chibi=-\chibi\sbi,\;\sbi\parb_{{\bar i}}=\parb_{{\bar i}},\\
(\mathrm b)&\quad \chi_i\parb_{i}^{(2)}+ \chi_i\parb_{i}=({\upv+\upv^{-1}})\chi_i\parb_{i}^{(1)},\\
(\mathrm c)&\quad \chi_i\parb_{i}^{(2)}= \chi_i\parb_{i}+\delta_{i}+\delta^{-1}_{i},\\
(\mathrm d)&\quad \chi_{i}\parb_{i}^{(1)} =\q\chi_{i}\parb_{i} + \delta^{-1}_{i},\\
(\mathrm e)&\quad\parbi\dei^{\pm1}\chibi+\chibi\parbi\dei^{\pm1}=\dei^{\pm1},\\
(\mathrm f)&\quad\chii\parb_{i}^{(2)} =\upv^2\chii\pari+({\upv+\upv^{-1}})\dei^{-1},\\
(\mathrm g)&\quad \parb_{i}^{(1)}\parb_{i}^{(1)}+ \parb_{i}\parb_{i}=(\upv+\upv^{-1})\parb_{i}^{(1)}\parb_{i}.\\
\end{aligned}
\end{equation}
\end{enumerate}
\end{lemma}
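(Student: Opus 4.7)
Every operator in the lemma is defined by its action on the basis $\sX=\{X^\bfa\}$ of $\SW$, so each identity is an equality of linear maps that can be checked by evaluating both sides on an arbitrary monomial $X^\bfa$. The three essential ingredients are (i) the definitions \eqref{partial}--\eqref{chi} of $\partial_i,\chi_i,\delta_i^{\pm 1}$ and $s_{\bar i}$; (ii) the quadratic relation $X_{\bar i}^2=\tfrac{\up-\up^{-1}}{\up+\up^{-1}}X_i^2$ from \eqref{suppol}, which is the only point where the queer deformation enters; and (iii) a few standard $\up$-integer identities, notably
\[
[k+1]+[k-1]=(\up+\up^{-1})[k],\qquad [k+1]=\up[k]+\up^{-k},\qquad [k+2]=\up^2[k]+(\up+\up^{-1})\up^{-k}.
\]

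\textbf{Execution, items (1)--(4).} Part (1) splits into two types: the sign-commutation among the $\partial_i$'s (resp.\ $\chi_i$'s) follows from careful tracking of the signs $(-1)^{\sum a_{\bar k}}$ in \eqref{partial} and \eqref{chi}, while $\partial_{\bar i}^2=0$ is immediate from $a_{\bar i}\in\{0,1\}$ (one of $[a_{\bar i}],[a_{\bar i}-1]$ is zero) and $\chi_{\bar i}^2$ is a direct translation of the defining relation $X_{\bar i}^2=\tfrac{\up-\up^{-1}}{\up+\up^{-1}}X_i^2$. Part (2) is a one-line eigenvalue computation: $\delta_j$ scales $X^\bfa$ by $\up^{a_j}$, and $\partial_i$ lowers $a_i$ by one, producing the factor $\up$ precisely when $i=j$. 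For Parts (3) and (4) I would do case analysis on whether $j$ coincides with $i$ or $\bar i$. In the non-coincident cases the operators pass through each other with the expected sign $(-1)^{p(i)p(j)}$. In the coincident cases I exploit the decompositions $\SW=\scrA_{\bar i,0}\oplus\scrA_{\bar i,1}$ of \eqref{Abij}: on $\scrA_{\bar i,0}$ the computation is identical to the ordinary polynomial case, whereas on $\scrA_{\bar i,1}$ the quadratic relation converts $X_{\bar i}^{a_{\bar i}+1}$ into $\tfrac{\up-\up^{-1}}{\up+\up^{-1}}X_i^2 X_{\bar i}^{a_{\bar i}-1}$, and the subsequent differentiation by $\partial_i$ produces the factor $[a_i+2]$ that accounts for the shift from $\partial_i$ to $\partial_i^{(2)}$ in (3)(b).

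\textbf{Execution, item (5), and the main obstacle.} Items (b), (c), (d), (f), (g) are pure scalar identities after evaluation on $X^\bfa$, and all reduce to one of the $\up$-integer identities displayed above; for instance, (g) becomes $[a_i+1][a_i]+[a_i][a_i-1]=(\up+\up^{-1})[a_i+1][a_i-1+1]$ which is the first identity multiplied by $[a_i]$. For (a) the relation $s_{\bar i}\chi_{\bar i}=-\chi_{\bar i}s_{\bar i}$ follows because $\chi_{\bar i}$ toggles $a_{\bar i}\in\{0,1\}$ and $s_{\bar i}$ reports $(-1)^{a_{\bar i}}$, whereas $s_{\bar i}\partial_{\bar i}=\partial_{\bar i}$ holds because after $\partial_{\bar i}$ the $\bar i$-component is forced to $0$. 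For (e) I would combine the identity $\chi_{\bar i}\partial_{\bar i}+\partial_{\bar i}\chi_{\bar i}=1$ from (3)(a) with the fact, proved in Part (4), that $\delta_i^{\pm 1}$ commutes with $\chi_{\bar i}$ up to a factor depending on $a_{\bar i}$, and then split again by $a_{\bar i}\in\{0,1\}$ to collapse the two factors back into $\delta_i^{\pm 1}$. The main bookkeeping challenge throughout is twofold: first, the sign $(-1)^{\sum_{j<i}a_{\bar j}}$ incurred every time $\chi_{\bar i}$ or $\partial_{\bar i}$ is commuted past an existing odd variable; second, the conversion $X_{\bar i}^2\rightsquigarrow\tfrac{\up-\up^{-1}}{\up+\up^{-1}}X_i^2$, which must be invoked \emph{before} any subsequent $\partial_i$ is applied, and which is the source of the shift from $\partial_i^{(j)}$ to $\partial_i^{(j+2)}$ in the odd subcases. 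Once these two bookkeeping conventions are fixed, every verification reduces to a short finite check, and no further structural argument is needed.
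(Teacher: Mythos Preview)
Your proposal is correct and follows the same approach as the paper: the paper's proof likewise says that every relation is checked by applying both sides to a basis vector $X^\bfa$, and singles out precisely the $\up$-integer identities $(\mathrm{b}')$--$(\mathrm{f}')$ that you list (your three displayed identities are the paper's $(\mathrm{b}')$, $(\mathrm{d}')$, $(\mathrm{f}')$). One small slip: in your verification of (5g) the right-hand side should read $(\up+\up^{-1})[a_i]^2$, not $(\up+\up^{-1})[a_i+1][a_i]$, since $\partial_i^{(1)}\partial_i(X^\bfa)=[a_i]\,[a_i] X^{\bfa-2\bfe_i}$; with this correction the identity reduces to $[a_i+1]+[a_i-1]=(\up+\up^{-1})[a_i]$ as you intended.
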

\begin{proof}
All relations can be checked easily by definition through applying to a basis vector $X^\bfa$. We omit the proof. Note that relations (b)/(g), (c), (d) (f) in (5) follow respectively from the identities: for all $a\geq1$,
\begin{equation}
\begin{aligned}\notag
({\rm b}')\quad&[a+2]+[a]=(\upv+\upv^{-1})[a+1];\\
({\rm c}')\quad&[a+2]-[a]=\upv^{a+1}+\upv^{-(a+1)};\\
({\rm d}')\quad&[a+1]-\upv[a]=\upv^{-a},\\
({\rm f}')\quad&[a+2]-\upv^2[a]=(\upv+\upv^{-1})\upv^{-a}.\\
\end{aligned}
\end{equation}
\end{proof}

We now use these operators to define a $\qUq$-module structure on $\SW$. 

For $1\leq h\leq n-1, i\in[1,n]$, let
\begin{equation}\label{EFKs}
\aligned
\sK_i&=\de_i \de_{\ol i},\\
\sE_h&=\chi_h \partial_{h+1}  \de_{\ol{h+1}}+ \chi_{\ol h} \partial_{\ol{h+1}}  \de_{h+1}^{-1}s_{\ol h},\\
\sF_h&=\chi_{h+1} \partial_{h}  \de_{\ol{h}}^{-1}+\chi_{\ol
{h+1}} \partial_{\ol{h}}  \de_{h},\\
\endaligned
\qquad %
\aligned
\sK_{\ol i}&=\chi_i \partial_{\ol i} \de^{-1}_i+\chi_{\ol i} \partial_i \de_{\ol i},\\
\sE_{\ol h}&=\chi_{\ol h} \partial_{{h+1}}  \de_{\ol{h+1}}s_{\ol h}+\chi_h \partial_{\ol{h+1}}  \de_{{h+1}}^{-1},\\
\sF_{\ol h}&=\chi_{\ol{h+1}} \partial_{h}  \de_{{\ol h}}^{-1}+\chi_{h+1} \partial_{\ol{h}}  \de_{h}.\\
\endaligned
\end{equation}
Clearly, $\sK_i,\sE_h,\sF_h\in\scrL_0$ and $\sK_{\ol i},\sE_{\ol h},\sF_{\ol h}\in\scrL_1$.
Let 
$$\{\bfe_i\mid{1\leq i\leq 2n}\}=\{\bfe_1,\ldots,\bfe_n,\bfe_{\ol 1},\ldots,\bfe_{\ol n}\}$$ ($\bfe_{\ol i}=\bfe_{n+i}$) be the standard  basis vectors of $\mathbb Z^{2n}.$

 \begin{theorem}\label{supalg} (1) The map 
$\rho:\qUq\to\text{End}_{\mathbb Q(\up)}(\SW)$ sending
$\k_{i}, \e_{j},\f_{j},\k_{\bi}, \e_{\bj},\linebreak\f_{\bj}$ to $\sK_{i}, \sE_{j},\sF_{j},\sK_{\bi}, \sE_{\bj},\sF_{\bj}$,
respectively, defines an algebra homomorphism. 
In other words, $\SW$ becomes a $\qUq$-module with the following $\qUq$-action formulas:
\begin{equation}\label{U-actions}
\begin{aligned}
\k_i .X^{\bfa}&=\v^{a_i+a_{\bar i}}    X^{\bfa},\\
\e_\i .X^{\bfa}&=\v^{a_{\overline{\i+1}}}[a_{\i+1}] X^{\bfa+\bfe_\i-\bfe_{\i+1}}
                +\v^{-a_{\i+1}}[ a_{\overline{\i+1}}]  X^{\bfa+\bfe_{\bar\i}-\bfe_{\overline{\i+1}}},\\
 \f_\i .X^{\bfa}&=\v^{-a_{\bar{\i}}}[ a_{\i}]   X^{\bfa-\bfe_\i+\bfe_{\i+1}}
                 +\v^{a_{\i}}[a_{\bar \i}]    X^{\bfa-\bfe_{\bar \i}+\bfe_{\overline{\i+1}}},\\
                 \k_{\bar i} .X^{\bfa}&=(-1)^{\sum_{1\leq j < i }a_{\bar j}} \bigg( \v^{-a_{i}}[ a_{\bar i}]
                          X^{\bfa+\bfe_i-\bfe_{\bar i}}+ \v^{a_{\bar i}}[a_{i}]
                          X^{\bfa+\bfe_{\bar i}-\bfe_{i}}\bigg),\\
\e_{\bar \i} .X^{\bfa}&=(-1)^{\sum_{1\leq j\leq \i}a_{\bar j}}\bigg(\v^{a_{\overline{\i+1}}} [ a_{\i+1}]
                       X^{\bfa+\bfe_{\bar \i}-\bfe_{\i+1}}+  \v^{-a_{\i+1}}[ a_{\overline{\i+1}}]
                       X^{\bfa+\bfe_\i-\bfe_{\overline{\i+1}}}\bigg),\\
\f_{\bar \i} .X^{\bfa}&=(-1)^{\sum_{1\leq j\leq \i}a_{\bar j}} \v^{-a_{\bar \i}}[a_{\i}]
                       X^{\bfa-\bfe_\i+\bfe_{\overline{\i+1}}}+(-1)^{\sum_{1\leq j< \i}a_{\bar j}}\v^{a_\i} [ a_{\bar\i} ]
                        X^{\bfa-\bfe_{\bar \i}+\bfe_{\i+1}},\\
\end{aligned}
\end{equation}
for all $\bfa\in \mathbb N^{n}\times \mathbb Z_2^{n},$ $1\leq i, h\leq n, h\neq n$.

(2) It is a weight $\qUq$-supermodule and all its homogeneous components  $\SWr$, $r\in \mathbb N$, are $\qUq$-subsupermodules.
\end{theorem}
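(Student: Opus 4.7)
The plan is to verify the theorem in three stages: (a) derive the explicit action formulas \eqref{U-actions} from the operator definitions in \eqref{EFKs}, (b) check that the defining relations (QQ1)--(QQ6) of $\qUq$ are satisfied by the images $\sK_i, \sE_h, \sF_h, \sK_{\bi}, \sE_{\bj}, \sF_{\bj}$, and (c) read off the weight supermodule structure and the stability of $\SWr$ from (a).

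For stage (a), I would simply apply each composite operator in \eqref{EFKs} to the basis vector $X^{\bfa}$, using the definitions of $\partial_i, \chi_i, \delta_i, s_{\bar i}$ in \eqref{partial}, \eqref{chi} and the sign convention coming from the odd factors $X_{\bar i}$. For instance, applied to $X^{\bfa}$, $\sE_h = \chi_h \partial_{h+1}\delta_{\ol{h+1}} + \chi_{\ol h}\partial_{\ol{h+1}}\delta_{h+1}^{-1}s_{\ol h}$ yields precisely $\up^{a_{\ol{h+1}}}[a_{h+1}]X^{\bfa+\bfe_h-\bfe_{h+1}} + \up^{-a_{h+1}}[a_{\ol{h+1}}]X^{\bfa+\bfe_{\bar h}-\bfe_{\ol{h+1}}}$ once one tracks the sign $(-1)^{\sum}$ coming from $\chi_{\ol h}$ and $s_{\ol h}$ against that already present in $\partial_{\ol{h+1}}$. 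The other formulas are analogous.

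For stage (b), which is the technical heart, I would verify the relations one family at a time using Lemma~\ref{opecom1}. Relations (QQ1) are immediate: since $\sK_i = \delta_i\delta_{\ol i}$ and the $\delta_j$ mutually commute, $\sK_i\sK_j = \sK_j\sK_i$; for the anti-commutation $\sK_{\bar i}\sK_{\bar j} + \sK_{\bar j}\sK_{\bar i}$ the cross terms cancel by Lemma~\ref{opecom1}(1) and the diagonal case $i=j$ reduces to identity \eqref{opecom2}(e) combined with (c), (d), giving the required $2[\cdots]/(\up^2-\up^{-2})$. Relations (QQ2) and (QQ3) reduce to the conjugation rules of $\delta_j$ past $\chi_k, \partial_k$ in Lemma~\ref{opecom1}(2),(4) together with the projections $\chi_{\ol i}\partial_{\ol i}, \partial_{\ol i}\chi_{\ol i}$ of part (3)(a); here one must be careful with the signed factor $s_{\bar h}$. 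For (QQ4), the key observation is that $\sE_i\sF_j - \sF_j\sE_i$ expands into four terms, of which three cancel when $i \neq j$ by commutativity relations from Lemma~\ref{opecom1}(3),(4), and when $i=j$ the remaining diagonal terms collapse using $\chi_i\partial_i^{(1)} = \up\chi_i\partial_i + \delta_i^{-1}$ (relation (d)) and its symmetric counterpart, yielding exactly $(\sK_i\sK_{i+1}^{-1} - \sK_i^{-1}\sK_{i+1})/(\up-\up^{-1})$. Relations (QQ5) for $|i-j|>1$ and (QQ6), the quantum Serre relations, follow because the operators act on disjoint variables or reduce to identities $[a+2] + [a] = (\up+\up^{-1})[a+1]$ established in \eqref{opecom2}(b$'$); the remaining (QQ5) relations $\sE_i\sE_{i+1} - \up\sE_{i+1}\sE_i = \sE_{\bar i}\sE_{\ol{i+1}} + \up\sE_{\ol{i+1}}\sE_{\bar i}$ involve the most terms and will be the main obstacle. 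I would handle them by expanding both sides on $X^{\bfa}$ and grouping contributions according to which subset of the variables $X_{i+1}, X_{i+2}, X_{\ol{i+1}}, X_{\ol{i+2}}$ is lowered; the sign tracking through $s_{\ol i}$ and the $(-1)^{\sum_{n+1\le j<k}a_j}$ from $\partial_{\bar k}$ is where care is most needed, but once organised the sums match term-by-term.

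Finally for stage (c): the formula for $\sK_i.X^\bfa$ in \eqref{U-actions} shows $X^\bfa \in \SW_{\wt(X^\bfa)}$ with $\wt(X^\bfa) = (a_1+a_{\bar 1}, \ldots, a_n+a_{\bar n})$, so $\SW = \bigoplus_{\la\in\N^n}\SW_\la$ is a weight decomposition. All action formulas in \eqref{U-actions} preserve the total degree $|\bfa|$, since each generator sends $X^\bfa$ to linear combinations of $X^{\bfa + \bfe_k - \bfe_l}$ with $|\bfe_k-\bfe_l|=0$; hence each homogeneous component $\SWr$ is $\qUq$-stable. The even/odd parities of $\sK_i, \sE_h, \sF_h$ vs.\ $\sK_{\bar i}, \sE_{\bar h}, \sF_{\bar h}$ established when defining the operators in $\scrL_0, \scrL_1$ give the super structure on $\SWr$, completing the proof.
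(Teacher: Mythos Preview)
Your outline is correct and follows the same overall strategy as the paper: derive the explicit action formulas from the operator definitions, then verify (QQ1)--(QQ6) for the images, then read off the weight and degree structure. Part~(c) matches the paper exactly.

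The main difference is in how the technically hard relations are handled. You propose to expand both sides of relations such as (QQ4), (QQ5) directly on a basis vector $X^{\bfa}$ and compare coefficients. The paper instead introduces the decomposition $\SW=\scrA_{\bar i,0}\oplus\scrA_{\bar i,1}$ from \eqref{Abij} and proves each relation $\sR=0$ by separately checking $\sR|_{\scrA_{\bar i,0}}=0$ and $\sR|_{\scrA_{\bar i,1}}=0$; on each piece the projections $\chi_{\bar i}\partial_{\bar i}$, $\partial_{\bar i}\chi_{\bar i}$ and the operators $\delta_{\bar i}$, $s_{\bar i}$ simplify to scalars, which cuts the number of surviving terms dramatically. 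The paper carries this out in full for one relation each in (QQ3) and (QQ4) and states the others are similar. It also explicitly remarks that your direct-expansion approach works as well but is longer, and was checked by a \textsc{Matlab} program. So your route is valid but less structured; the paper's subspace-restriction trick is the organisational idea you are missing.

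One small correction: in (QQ4) you write that $\sE_i\sF_j-\sF_j\sE_i$ ``expands into four terms, of which three cancel when $i\neq j$''. Each of $\sE_i$ and $\sF_j$ has two summands, so the commutator has eight terms; when $i\neq j$ all of them cancel in pairs (the result is~$0$), and when $i=j$ the surviving combinations are what collapse via \eqref{opecom2}(d). This does not affect the validity of your plan, but the bookkeeping is heavier than your description suggests---which is precisely why the paper's restriction-to-$\scrA_{\bar i,j}$ device pays off.
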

\begin{proof} Assertion (2) follows easily from (1). We now prove (1) by verifying that all the relations 
are satisfied for these operators.

The relations in (QQ1) can all be trivially checked except the last one which we prove now.
If $1\leq i\neq j\leq n$ then, by Lemma \ref{opecom1}(1)--(4), $ab=(-1)^{p(a)p(b)}ba$ for all
$a\in\{\partial_i,\chi_i,\delta_i,\partial_{\bar i},\chi_{\bar i},\delta_{\bar i}\}$, $b\in\{\partial_j,\chi_j,\delta_j,\partial_{\bar j},\chi_{\bar j},\delta_{\bar j}\}$. So
\begin{equation}
\begin{aligned}\notag
\kbi\kbj&=(\chii\parbi\dei^{-1}+\chibi\pari\debi)(\chij\parbj\dej^{-1}+\chibj\parj\debj)\\
&=-(\chij\parbj\dej^{-1}+\chibj\parj\debj)(\chii\parbi\dei^{-1}+\chibi\pari\debi)=-\kbj\kbi.
\end{aligned}
\end{equation}
If $1\leq i=j\leq n$, by Lemma \ref{opecom1}(2),(3),(1), we see $(\chii\parbi\dei^{-1})^2=0$, and so
\begin{equation}
\begin{aligned}\notag
\kbi^2&=(\chii\parbi\dei^{-1}+\chibi\pari\debi)^2=\chii\parbi\dei^{-1}\cdot\chibi\pari\debi
+\chibi\pari\debi\cdot\chii\parbi\dei^{-1}+\chibi\pari\debi\cdot\chibi\pari\debi.
\end{aligned}
\end{equation}
Restricting $\kbi^2$ to the subspace $\scrA_{\bar i,0}$ gives $\chibi\pari\debi\chii\parbi\dei^{-1}|_{\scrA_{\bar i,0}}=0$ and, by Lemma \ref{opecom1}(3), $\parbi\chibi|_{\scrA_{\bar i,0}}=1$. Thus, 
\begin{equation}
\begin{aligned}\notag
\kbi^2|_{\scrA_{\bar i,0}}&=\chii\dei^{-1}(\parbi\chibi)\pari
+\chibi\pari(\up\chibi\debi)\pari=\chii\dei^{-1}\pari
+\upv\frac{\upv-\upv^{-1}}{\upv+\upv^{-1}}\chii^2\pari^2=\frac{\cki^{2}-\cki^{-2}}{\upv^2-\upv^{-2}}\bigg|_{\scrA_{\bar i,0}},
\end{aligned}
\end{equation}
where the last equality is seen by applying the left side to $X^\bfa$, since
\begin{equation}\label{gauss}
\upv^{-a_i+1}[a_i]+\upv\frac{\upv-\upv^{-1}}{\upv+\upv^{-1}}[a_i-1][a_i]=\frac{\upv^{2a_i}-\upv^{-2a_i}}{\upv^2-\upv^{-2}}.
\end{equation}
Similarly, restricting $\kbi^2$ to the subspace $\scrA_{\bar i,1}$ gives $\chii\parbi\dei^{-1}\chibi\pari\debi|_{\scrA_{\bar i,1}}=0$,
$\chibi\pari\debi\cdot\chii\parbi\dei^{-1}|_{\scrA_{\bar i,1}}=\pari\chii\dei^{-1}|_{\scrA_{\bar i,1}}$
and $\chibi\pari\debi\cdot\chibi\pari\debi|_{\scrA_{\bar i,1}}=\upv\frac{\upv-\upv^{-1}}{\upv+\upv^{-1}}\pari \chii^2\pari|_{\scrA_{\bar i,1}}$. Hence,
\begin{equation}
\kbi^2|_{\scrA_{\bar i,1}}=\pari\chii\dei^{-1}+\upv\frac{\upv-\upv^{-1}}{\upv+\upv^{-1}}\pari \chii^2\pari=\frac{\cki^{2}-\cki^{-2}}{\upv^2-\upv^{-2}}\bigg|_{\scrA_{\bar i,1}}
\end{equation}
by \eqref{gauss} with $a_i$ replaced by $a_i+1$, proving (QQ1).

The relations in (QQ2) follows from the following commuting relations:
    $$\de_i\de_{\ol i}(\chi_k\partial_l)=\up^{\de_{i,j}-\de_{i,j+1}}(\chi_k\partial_l)\de_i\de_{\ol i},\quad
    \de_i\de_{\ol i}(\chi_{l}\partial_{k})=\up^{-(\de_{i,j}-\de_{i,j+1})}(\chi_{l}\partial_{k})\de_i\de_{\ol i},
    $$
where $(k,l)\in\{(j,j+1),(\ol j,j+1),(j,\ol{j+1}),(\ol j,\ol{j+1})\}$.

To check the rest of the relations in Definition \ref{presentqUq}, we apply the following two general rules. First, we use Lemma \ref{opecom1}(3) to break the proof of  a relation $\sR=0$ on $\SW$ by
proving that $\sR\partial_{\ol i}\chi_{\ol i}=0$ and
$\sR\chi_{\ol i}\partial_{\ol i}=0$, or equivalently, $\sR|_{\scrA_{{\ol i}, j}}=0$ for $j=0,1$.
Second, to simplify a product of several operators, we always move the $\de_i,s_i$ to the right and move $\chi_i$ to the left.


We now verify (QQ3). The first two commuting relations follows directly from the commuting relations in Lemma \ref{opecom1}(1)--(4) for $i\neq j,j+1$. We now verify the relation
\begin{equation}\label{QQ3}
(\kbi\cei-\q\cei\kbi)\cki=\ebi\;(1\leq i<n)
\end{equation}
 in (QQ3); the proof for the other three are similar.
By definition,
\begin{equation}\notag
  \begin{aligned}
 & \kbi\cei-\q\cei\kbi\\
=\;&\chii\parbi\dei^{-1}\cdot\chii\parb_{i+1}\delta_{\ov{i+1}}\\
&+\chibi\pari\debi\cdot\chii\parb_{i+1}\delta_{\ov{i+1}}\\
&+\chii(\parbi\dei^{-1}\cdot\chibi)\parb_{\ov{i+1}}\delta^{-1}_{i+1}\sbi\\
&+\chibi\pari\debi \cdot\chibi\parb_{\ov{i+1}}\delta^{-1}_{i+1}\sbi\\
 &-\upv
  \chii\parb_{i+1}\delta_{\ov{i+1}}\cdot\chii\parbi\dei^{-1}\\
  &-\up\chibi\parb_{\ov{i+1}}\delta^{-1}_{i+1}\cdot\sbi\chii\parbi\dei^{-1}\\
  &-\up  \chii\parb_{i+1}\delta_{\ov{i+1}}\cdot\chibi\pari\debi\\
  &-\up\chibi\parb_{\ov{i+1}}\delta^{-1}_{i+1}\cdot\sbi\chibi\pari\debi\\
\end{aligned}
\qquad
  \begin{aligned}
  &\quad\\
=\;&\upv^{-1}\chii\chii\parbi\parb_{i+1}\dei^{-1}\delta_{\ov{i+1}}\\
&+\chibi\chii\parb_{i}^{(1)}\parb_{i+1}\debi\delta_{\ov{i+1}}\\
&+\chii(\dei^{-1}-\chibi\parbi\dei^{-1})\parb_{\ov{i+1}}\delta^{-1}_{i+1}\sbi\;(\text{by }(\ref{opecom2}\mathrm e))\\
 &+\chibi\pari\debi
  \chibi\parb_{\ov{i+1}}\delta^{-1}_{i+1}\sbi\\
  & -\upv  \chii\chii\parb_{i+1}\parbi\delta_{\ov{i+1}}\dei^{-1} \\
  &-\upv\chibi\chii\parb_{\ov{i+1}}\parbi\delta^{-1}_{i+1}\dei^{-1} \\
  &-\upv
  \chii\chibi\parb_{i+1}\pari\delta_{\ov{i+1}}\debi\\
  &-\upv\chibi\chibi\parb_{\ov{i+1}}\pari\sbi\delta^{-1}_{i+1}\debi.\\
\end{aligned}
\end{equation}
Here each term on the right is obtained from the corresponding term on the left by applying some relations in Lemma \ref{opecom1}.

When restricting to $\scrA_{\ol i,0}$, terms with a factor $\parb_{\ol i}$ is zero. Observe also $\de_{\ol i}|_{\scrA_{\ol i,0}}=1=s_{\ol i}|_{\scrA_{\ol i,0}}$. Thus,
$$\begin{aligned}\notag
(\kbi\cei-\q\cei\kbi)\cki|_{\scrA_{\ol i,0}}
  = &\chibi\chii\parb_{i}^{(1)}\parb_{i+1}\delta_{\ov{i+1}}\delta_{i}+\chii\parb_{\ov{i+1}}\delta^{-1}_{i+1}+\chibi\pari(\debi  \chibi)\parb_{\ov{i+1}}\delta^{-1}_{i+1}\delta_{i}\\ & -\upv  \chii\chibi\parb_{i+1}\pari\delta_{\ov{i+1}}\delta_{i}-\upv\chibi\chibi\parb_{\ov{i+1}}\pari\delta^{-1}_{i+1}\delta_{i}\\
\end{aligned}$$
By Lemma \ref{opecom1}(4a)(3a), the third and fifth terms are cancelled. Hence, by (\ref{opecom2}{\rm d}),
$$\aligned
(\kbi\cei-\q\cei\kbi)\cki|_{\scrA_{\ol i,0}}&=\chibi(\chii\parb_{i}^{(1)} - \upv  \chii\pari )
  \parb_{i+1}\delta_{\ov{i+1}}\delta_{i}+\chii\parb_{\ov{i+1}}\delta^{-1}_{i+1}\\
  = &\chibi  \parb_{i+1}\delta_{\ov{i+1}}+\chii\parb_{\ov{i+1}}\delta^{-1}_{i+1}
  =\ebi|_{\scrA_{\ol i,0}}.
 \endaligned$$

Now restricting to $\scrA_{\ol i,1}$,  by noting $\de_{\ol i}|_{\scrA_{\ol i,1}}=\up 1$, $s_{\ol i}|_{\scrA_{\ol i,0}}=-1$, yields
\begin{equation}\label{QQ3a}
\aligned
&(\kbi\cei-\q\cei\kbi)\cki|_{\scrA_{\ol i,1}}\\
=\;&\chii\chii\parbi\parb_{i+1}\delta_{\ov{i+1}}+\upv^2\chibi\chii\pari^{(1)}\parb_{i+1}\delta_{\ov{i+1}}\de_i+(\upv\chii\chibi\parbi\parb_{\ov{i+1}}\delta^{-1}_{i+1}-\upv\chii\parb_{\ov{i+1}}\delta^{-1}_{i+1})\\
 &-\upv\chibi \pari\debi\chibi\parb_{\ov{i+1}}\delta^{-1}_{i+1}\dei\\
  & -\upv^2  \chii\chii\parb_{i+1}\parbi\delta_{\ov{i+1}} -\upv^2\chii\chibi\parb_{\ov{i+1}}\parb_{\ol{i}}\delta^{-1}_{i+1} -\upv^3  \chii\chibi\parb_{i+1}\pari\delta_{\ov{i+1}}\dei\\
  &+\upv^3\chibi\chibi\parb_{\ov{i+1}}\pari\delta^{-1}_{i+1}\dei,\\
  \endaligned
  \end{equation}
By applying $\chi_{\bar i}\partial_{\bar i}|_{\scrA_{\ol i,1}}=1$ to the third and sixth terms,
 $\pari\debi\chibi|_{\scrA_{\ol i,1}}=\chibi\parb_i^{(2)}|_{\scrA_{\ol i,1}}$ (cf. Lemma \ref{opecom1}(3b)) to the fourth term, and combining terms one and five, two and seven, we obtain  
  $$\aligned
\eqref{QQ3a}
 =\;&(1-\upv^2 )\chii\chii\parbi\parb_{i+1}\delta_{\ov{i+1}}+\upv^2\chibi(\chii\pari^{(1)}- \upv  \chii \pari )
     \parb_{i+1}\delta_{\ov{i+1}}\dei+(0)\\
  &-\upv\chibi \chibi\pari^{(2)}\parb_{\ov{i+1}}\delta^{-1}_{i+1}\dei+\upv^2\chii\parb_{\ov{i+1}}\delta^{-1}_{i+1}+\upv^3\chibi\chibi\parb_{\ov{i+1}}\pari\delta^{-1}_{i+1}\dei\\
 =\;&-(\upv^2+1 )\chibi (\chibi\parbi)\parb_{i+1}\delta_{\ov{i+1}}+\upv^2\chibi \parb_{i+1}\delta_{\ov{i+1}}\;(\text{by }(\ref{opecom2}{\rm d}))\\
  &-\upv\chibi \chibi\pari^{(2)}\parb_{\ov{i+1}}\delta^{-1}_{i+1}\dei+\upv^2\chii\parb_{\ov{i+1}}\delta^{-1}_{i+1}+\upv^3\chibi\chibi\parb_{\ov{i+1}}\pari\delta^{-1}_{i+1}\dei\\
     =\; &-\chibi  \parb_{i+1}\delta_{\ov{i+1}}-\upv\chibi \chibi\pari^{(2)}\parb_{\ov{i+1}}\delta^{-1}_{i+1}\dei+\upv^3\chibi\chibi\pari\parb_{\ov{i+1}}\delta^{-1}_{i+1}\dei+\upv^2\chii\parb_{\ov{i+1}}\delta^{-1}_{i+1}\\
 =\;  & -\chibi  \parb_{i+1}\delta_{\ov{i+1}} +\chii\parb_{\ov{i+1}}\delta^{-1}_{i+1}\;(\text{by }(\ref{opecom2}{\rm f}) \text{ via }\chibi^2=\frac{\up-\up^{-1}}{\up+\up^{-1}}\chii^2)
  =\ebi|_{\scrA_{\ol i,1}},
  \endaligned$$
proving \eqref{QQ3}.

For (QQ4), the commuting relations when $i\neq j$ are clear. We only check the relation
\begin{equation}\label{QQ4}
\cei\fbi-\fbi\cei=\ckii^{-1}\kbi-\kbii\cki^{-1};
\end{equation} 
the proof of the other relations for $i=j$ in (QQ4) is similar.

By definition $\cei=\chii\parb_{i+1}\delta_{\ov{i+1}}+\chibi\parb_{\ov{i+1}}\delta^{-1}_{i+1}\sbi$,
$\fbi=\chi_{\ov{i+1}}\parb_{i}\delta_{\ov{i}}^{-1} +\chi_{{i+1}}\parb_{\ov{i}}\delta_{i}$, the left hand side of \eqref{QQ4} becomes
\begin{equation}
\begin{aligned}\label{qqq4a}
&\cei\fbi-\fbi\cei\\
=\;& \chii\parb_{i+1}\delta_{\ov{i+1}} \chi_{\ov{i+1}}\parb_{i}\delta_{\ov{i}}^{-1} \\
& + \chibi(\parb_{\ov{i+1}}\delta^{-1}_{i+1}     \chi_{\ov{i+1}})\sbi \parb_{i}\delta_{\ov{i}}^{-1}\\
   &+ \chii\parb_{i+1}\delta_{\ov{i+1}}\chi_{{i+1}}\parb_{\ov{i}}\delta_{i}\\
   &+ \chibi\parb_{\ov{i+1}}\delta^{-1}_{i+1}\sbi    \chi_{{i+1}}\parb_{\ov{i}}\delta_{i}\\
&-\chi_{\ov{i+1}}\parb_{i}\delta_{\ov{i}}^{-1}  \chii\parb_{i+1}\delta_{\ov{i+1}} \\
&-\chi_{{i+1}}\parb_{\ov{i}}\delta_{i}   \chii\parb_{i+1}\delta_{\ov{i+1}}\\
&-\chi_{\ov{i+1}}\parb_{i}\delta_{\ov{i}}^{-1} \chibi\parb_{\ov{i+1}}\delta^{-1}_{i+1}\sbi\\
&-\chi_{{i+1}}(\parb_{\ov{i}}\delta_{i}\chibi)\parb_{\ov{i+1}}\delta^{-1}_{i+1}\sbi\\
\end{aligned}\qquad
\begin{aligned}
\quad\\
=\;& \chii\parb_{i+1}\delta_{\ov{i+1}}\chi_{\ov{i+1}}\parb_{i}\delta_{\ov{i}}^{-1}\; \qquad\qquad\quad(L_1)\\
   &     + \chibi(-\chi_{\ov{i+1}}\parb_{\ov{i+1}}+1)\delta^{-1}_{i+1}\parb_{i}\sbi\delta_{\ov{i}}^{-1}\;(L_2),\text{by }(\ref{opecom2}{\rm e})\\
&+ \chii\chi_{{i+1}}\parb_{i+1}^{(1)}\parb_{\ov{i}}\delta_{\ov{i+1}}\delta_{i}\; \qquad\qquad\;\;(L_3)\\
&+\upv^{-1} \chibi\chi_{{i+1}}\parb_{\ov{i+1}}\parb_{\ov{i}}\delta^{-1}_{i+1}\delta_{i}\;\qquad\quad (L_4)\\
&-\chi_{\ov{i+1}}\chii \parb_{i}^{(1)}   \parb_{i+1} \delta_{\ov{i}}^{-1}\delta_{\ov{i+1}}\;\qquad \quad(L_5^-)\\
& -\upv\chi_{{i+1}} \chii\parb_{\ov{i}}\parb_{i+1}\delta_{i}\delta_{\ov{i+1}}\; \qquad\qquad(L_6^-)\\
&-\chi_{\ov{i+1}}\parb_{i}\delta_{\ov{i}}^{-1} \chibi\parb_{\ov{i+1}}\delta^{-1}_{i+1}\sbi \; \qquad\quad(L_7^-)\\
&-\chi_{{i+1}}(-\chibi\parb_{\ov{i}}+1)\delta_{i}\parb_{\ov{i+1}}\delta^{-1}_{i+1}\sbi. \;\; (L_8^-)\\
\end{aligned}
\end{equation}
Here we labeled  the eight term on the right by $L_j$ or $L_k^-$ for $1\leq j\leq 4, 5\leq k\leq 8$. On the other hand, the right hand side of \eqref{QQ4} has four terms, labelled in order by $R_1,R_2,R_3^-,R_4^-$ as follows:
\begin{equation}
\begin{aligned}\label{qqq4b}
&\;\ckii^{-1}\kbi-\kbii\cki^{-1}\\
=\;& \delta_{{i+1}}^{-1} \delta_{\ov{i+1}}^{-1} (\chii\parbi\dei^{-1}+\chibi\pari\debi)-
(\chi_{i+1}\parb_{\ov{i+1}}\delta^{-1}_{i+1}+\chi_{\ov{i+1}}\parb_{{i+1}}\delta_{\ov{i+1}})\delta_{{i}}^{-1}\delta_{\ov{i}}^{-1}\\
=\;& \chii\parbi\dei^{-1}\delta_{{i+1}}^{-1} \delta_{\ov{i+1}}^{-1} +\chibi\pari\debi\delta_{{i+1}}^{-1} \delta_{\ov{i+1}}^{-1}-\chi_{i+1}\parb_{\ov{i+1}}\delta^{-1}_{i+1}\delta_{{i}}^{-1}\delta_{\ov{i}}^{-1}
   -\chi_{\ov{i+1}}\parb_{{i+1}}\delta_{\ov{i+1}}\delta_{{i}}^{-1}\delta_{\ov{i}}^{-1}.\\
   &\qquad (R_1)\qquad\qquad\qquad(R_2)\qquad\qquad\qquad(R_3^-)\qquad\qquad\qquad(R_4^-)\\
\end{aligned}
\end{equation}

We first consider the case by restricting the operators in \eqref{qqq4a} and \eqref{qqq4b} to ${\scrA_{\ol{i+1},0}}$. Thus, only five terms, $L_1,L_2,L_3,L_5^-,L_6^-$, in \eqref{qqq4a} and three terms $R_1,R_2,R_4^-$, in \eqref{qqq4b} survive, as the terms with a factor $\parb_{\ol{i+1}}$ are 0.  Regrouping and noting $\chi_{\ol{i+1}}\parb_{\ol{i+1}}|_{\scrA_{\ol{i+1},0}}=0$, $\de_{\ol{i+1}}|_{\scrA_{\ol{i+1},0}}=1$ and $\delta_{\ov{i+1}} \chi_{\ov{i+1}}=\up \chi_{\ov{i+1}}\delta_{\ov{i+1}}$ yield
$$\aligned
&(\cei\fbi-\fbi\cei)|_{\scrA_{\ol{i+1},0}}-(\ckii^{-1}\kbi-\kbii\cki^{-1})|_{\scrA_{\ol{i+1},0}}\\
=\;& L_1-L_5^-+R_4^-+L_3-L_6^-+L_2-R_1-R_2\\
=\;& \chii\parb_{i+1}\delta_{\ov{i+1}}
   \chi_{\ov{i+1}}\parb_{i}\delta_{\ov{i}}^{-1} -\chi_{\ov{i+1}}\chii \parb_{i}^{(1)}   \parb_{i+1} \delta_{\ov{i}}^{-1}\delta_{\ov{i+1}}+\chi_{\ov{i+1}}\parb_{{i+1}}\delta_{\ov{i+1}}\delta_{{i}}^{-1}\delta_{\ov{i}}^{-1}\\
   &+ \chii\chi_{{i+1}}\parb_{i+1}^{(1)}\parb_{\ov{i}}\delta_{\ov{i+1}}\delta_{i}-\upv\chi_{{i+1}} \chii\parb_{\ov{i}}\parb_{i+1}\delta_{i}\delta_{\ov{i+1}}\\
   &+\chibi\delta^{-1}_{i+1}\parb_{i}\sbi\delta_{\ov{i}}^{-1}-\chii\parbi\dei^{-1}\delta_{{i+1}}^{-1} \delta_{\ov{i+1}}^{-1} -\chibi\pari\debi\delta_{{i+1}}^{-1} \delta_{\ov{i+1}}^{-1}\\
=\;&\chi_{\ov{i+1}}(\up\chi_i\parb_i-\chii \parb_{i}^{(1)}+\de_i^{-1})\parb_{i+1}\debi^{-1}+ \chii(\chi_{{i+1}}\parb_{i+1}^{(1)}-\up\chi_{{i+1}}\parb_{i+1})\parb_{\ov{i}}\delta_{i}\\
&+\chibi\delta^{-1}_{i+1}\parb_{i}\sbi\delta_{\ov{i}}^{-1}-\chii\parbi\dei^{-1}\delta_{{i+1}}^{-1}-\chibi\pari\debi\delta_{{i+1}}^{-1}\\
=\;&(\chii\parb_{\ov{i}}\delta_{i}+\chibi\parb_{i}\sbi\delta_{\ov{i}}^{-1}-\chii\parbi\dei^{-1}-\chibi\pari\debi)\delta_{{i+1}}^{-1}|_{\scrA_{\ol{i+1},0}}\;\; (\text{by }(\ref{opecom2}{\rm d})).
\endaligned
$$
Let $\psi$ be the four term element in parentheses. Clearly, $\psi|_{\scrA_{\ol{i+1},0}\cap\scrA_{\ol{i},0}}=0$.
Applying $\psi$ to $X^\bfa\in \scrA_{\ol{i+1},0}\cap\scrA_{\ol{i},1}$ yields $\alpha X^{\bfa+\bfe_i-\bfe_{\ol i}}$ with
$ \alpha=\upv^{a_i}-\upv^{-1}\frac{\upv-\upv^{-1}}{\upv+\upv^{-1}}[a_i]-\upv^{-a_i}
  -\upv\frac{\upv-\upv^{-1}}{\upv+\upv^{-1}}[a_i]=0.$ Hence, $\text{\eqref{QQ4}}|_{\scrA_{\ol{i+1},0}}=0$.

We now prove $\text{\eqref{QQ4}}|_{\scrA_{\ol{i+1},1}}=0$.  By Lemma \ref{opecom1}(4a)(3b),
 $\chii\parb_{i+1}\delta_{\ov{i+1}}\chi_{\ov{i+1}}\parb_{i}\delta_{\ov{i}}^{-1}|_{\scrA_{\ol{i+1},1}}=\chii\chi_{\ov{i+1}}\parb_{i+1}^{(2)}\parb_{i}\delta_{\ov{i}}^{-1}|_{\scrA_{\ol{i+1},1}}$. Also, $\chi_{\ol{i+1}}\parb_{\ol{i+1}}|_{\scrA_{\ol{i+1},1}}=1$ (applied to $L_2,L_7$  in \eqref{qqq4a} to get $L_2=0$), $\de_{\ol{i+1},1}|_{\scrA_{\ol{i+1},1}}=\up1$ and $\chi_{\ol{i+1}}\chi_{\ol{i}}=-\chi_{\ol{i}}\chi_{\ol{i+1}}$.
Thus, \eqref{qqq4a}$|_{\scrA_{\ol{i+1},1}}$ becomes
$$\aligned
(\cei\fbi-\fbi\cei)|_{\scrA_{\ol{i+1},1}}=& \chii\chi_{\ov{i+1}}\parb_{i+1}^{(2)}\parb_{i}\delta_{\ov{i}}^{-1} + \upv\chii\chi_{{i+1}}\parb_{i+1}^{(1)}\parb_{\ov{i}}\delta_{i}+(\upv^{-1} \chibi\chi_{{i+1}}\parb_{\ov{i+1}}\parb_{\ov{i}}\delta^{-1}_{i+1}\delta_{i})\\
&-\upv\chi_{\ov{i+1}}\chii \parb_{i}^{(1)}   \parb_{i+1} \delta_{\ov{i}}^{-1}-\upv^2\chi_{{i+1}} \chii\parb_{\ov{i}}\parb_{i+1}\delta_{i}+\parb_{i}\delta_{\ov{i}}^{-1} \chibi\delta^{-1}_{i+1}\sbi\\
&+(\chi_{{i+1}}\chibi\parb_{\ov{i}}\parb_{\ov{i+1}}\delta_{i}\delta^{-1}_{i+1}\sbi-\chi_{{i+1}}\parb_{\ov{i+1}}\delta_{i}\delta^{-1}_{i+1}\sbi)
\endaligned$$
Since, for the three terms in parentheses, by $\chibi\parb_{\ov{i}}|_{\scrA_{\bar i,0}}=0$ and 
$\chibi\parb_{\ov{i}}|_{\scrA_{\bar i,1}}=1$,
  \begin{equation}
\begin{aligned}\notag
(-\upv^{-1}\chi_{{i+1}}\chibi\parb_{\ov{i}}&\parb_{\ov{i+1}}\delta^{-1}_{i+1}\delta_{i}+\chi_{{i+1}}\chibi\parb_{\ov{i}}\parb_{\ov{i+1}}\delta_{i}\delta^{-1}_{i+1}\sbi-\chi_{{i+1}}\parb_{\ov{i+1}}\delta_{i}\delta^{-1}_{i+1}\sbi)|_{\scrA_{\ol{i+1},1}}\\
&=- \chi_{{i+1}}\parb_{\ov{i+1}}\delta_{i}\delta^{-1}_{i+1}\delta_{\ov{i}}^{-1}|_{\scrA_{\ol{i+1},1}},
\end{aligned}
\end{equation}
it follows that
$$\aligned
(\cei\fbi-\fbi\cei)|_{\scrA_{\ol{i+1},1}}=& \chii\chi_{\ov{i+1}}\parb_{i+1}^{(2)}\parb_{i}\delta_{\ov{i}}^{-1} + \upv\chii\chi_{{i+1}}\parb_{i+1}^{(1)}\parb_{\ov{i}}\delta_{i}-\upv\chi_{\ov{i+1}}\chii \parb_{i}^{(1)}   \parb_{i+1} \delta_{\ov{i}}^{-1} \\
&-\upv^2\chi_{{i+1}} \chii\parb_{\ov{i}}\parb_{i+1}\delta_{i}+\parb_{i}\delta_{\ov{i}}^{-1} \chibi\delta^{-1}_{i+1}\sbi-\chi_{{i+1}}\parb_{\ov{i+1}}\delta_{i}\delta^{-1}_{i+1}\delta_{\ov{i}}^{-1}.\\
\endaligned
$$
We label the six terms by $l_1,l_2,l_3^-,l_4^-,l_5,l_6^-$.
On the other hand,
$$\aligned
&\;(\ckii^{-1}\kbi-\kbii\cki^{-1})|_{\scrA_{\ol{i+1},1}}\\
=\;  & \upv^{-1} \chii\parbi\dei^{-1}\delta_{{i+1}}^{-1}
+\upv^{-1} \chibi\pari\debi\delta_{{i+1}}^{-1}  -\chi_{i+1}\parb_{\ov{i+1}}\delta^{-1}_{i+1}\delta_{{i}}^{-1}\delta_{\ov{i}}^{-1}
   -\upv\chi_{\ov{i+1}}\parb_{{i+1}}\delta_{{i}}^{-1}\delta_{\ov{i}}^{-1},\\
\endaligned
$$
whose terms are labelled by $r_1,r_2,r_3^-,r_4^-$.
Hence, the difference of $(\cei\fbi-\fbi\cei)|_{\scrA_{\ol{i+1},1}}$ and $(\ckii^{-1}\kbi-\kbii\cki^{-1})|_{\scrA_{\ol{i+1},1}}$ has 10 terms which are regrouped as follows:
$$(\cei\fbi-\fbi\cei)|_{\scrA_{\ol{i+1},1}}-(\ckii^{-1}\kbi-\kbii\cki^{-1})|_{\scrA_{\ol{i+1},1}}=\Sigma'|_{\scrA_{\ol{i+1},1}}+\Sigma''|_{\scrA_{\ol{i+1},1}},$$
where
\begin{equation}
\begin{aligned}\notag
\Sigma'=\;& \upv\chii\chi_{{i+1}}\parb_{i+1}^{(1)}\parb_{\ov{i}}\delta_{i}
 -\upv^2\chi_{{i+1}} \chii\parb_{\ov{i}}\parb_{i+1}\delta_{i}+\parb_{i}\delta_{\ov{i}}^{-1} \chibi\sbi\delta^{-1}_{i+1}\\
 &-\upv^{-1} \chii\parbi\dei^{-1}\delta_{{i+1}}^{-1}-\upv^{-1} \chibi\pari\debi\delta_{{i+1}}^{-1}(=l_2-l_4^-+l_5-r_1-r_2),\\
\Sigma''=\;&\chii\chi_{\ov{i+1}}\parb_{i+1}^{(2)}\parb_{i}\delta_{\ov{i}}^{-1}
-\upv\chi_{\ov{i+1}}\chii \parb_{i}^{(1)}   \parb_{i+1} \delta_{\ov{i}}^{-1}+\chi_{i+1}\parb_{\ov{i+1}}\delta^{-1}_{i+1}\delta_{{i}}^{-1}\delta_{\ov{i}}^{-1}  \\
&-\chi_{{i+1}}\parb_{\ov{i+1}}\delta_{i}\delta^{-1}_{i+1}\delta_{\ov{i}}^{-1}
  +\upv\chi_{\ov{i+1}}\parb_{{i+1}}\delta_{{i}}^{-1}\delta_{\ov{i}}^{-1}(=l_1-l_3^-+r_3^--l_6^-+r_4^-).
\end{aligned}
\end{equation}
We now prove both $\Sigma'|_{\scrA_{\ol{i+1},1}}=0$ and $\Sigma''|_{\scrA_{\ol{i+1},1}}=0$. This can be seen by further restricting them to $\scrA_{\ol{i+1},1}\cap \scrA_{\ol{i},0}$ and $\scrA_{\ol{i+1},1}\cap \scrA_{\ol{i},1}$. More precisely, by noting $\parbi|_{\scrA_{\ol{i},0}}=0$, we have
\begin{equation}
\begin{aligned}\notag
\Sigma'|_{\scrA_{\ol{i+1},1}\cap \scrA_{\ol{i},0}}
&=\parb_{i}\delta_{\ov{i}}^{-1} \chibi\delta^{-1}_{i+1}-\upv^{-1} \chibi\pari\delta_{{i+1}}^{-1}
=\parb_{i}(\up^{-1}\chibi\delta_{\ov{i}}^{-1} )\delta^{-1}_{i+1}-\upv^{-1} \chibi\pari\delta_{{i+1}}^{-1}
=0.\\
\Sigma'|_{\scrA_{\ol{i+1},1}\cap \scrA_{\ol{i},1}}
&= \upv\chii(\chi_{{i+1}}\parb_{i+1}^{(1)}-\up\chi_{{i+1}}\parb_{i+1})\parb_{\ov{i}}\delta_{i}-\parb_{i} \chibi\delta^{-1}_{i+1}-\upv^{-1} \chii\parbi\dei^{-1}\delta_{{i+1}}^{-1}-\chibi\pari\delta_{{i+1}}^{-1}  \\
&= \upv\chii\delta_{{i+1}}^{-1}\parb_{\ov{i}}\delta_{i}(\text{by }(\ref{opecom2}{\rm d}))-\parb_{i} \chibi\delta^{-1}_{i+1} -\upv^{-1} \chii\parbi\dei^{-1}\delta_{{i+1}}^{-1}-\chibi\pari\delta_{{i+1}}^{-1}  \\
&=\Sigma \delta_{{i+1}}^{-1},
\end{aligned}
\end{equation}
where $\Sigma=\upv\chii\parb_{\ov{i}}\delta_{i}-\parb_{i} \chibi -\upv^{-1} \chii\parbi\dei^{-1}-\chibi\pari$.
Now, for $X^\bfa\in \scrA_{\ol{i+1},1}$, we have
\begin{equation}
\begin{aligned}\notag
\Sigma(X^\bfa)=&\bigg( \upv^{a_i+1}-\frac{\upv-\upv^{-1}}{\upv+\upv^{-1}}[a_i+2]
-\upv^{-1-a_i}-\frac{\upv-\upv^{-1}}{\upv+\upv^{-1}}[a_i]\bigg)X^{\bfa+\bfe_i-\bfe_{\ol{i}}}=0,
\end{aligned}
\end{equation}
proving $\Sigma'|_{\scrA_{\ol{i+1},1}}=0$.
To prove $\Sigma''|_{\scrA_{\ol{i+1},1}}=0$, note that $\Sigma''=\Sigma'''\debi^{-1}$, where

\begin{equation}
\begin{aligned}\notag
\Sigma'''=\;& \chii\chi_{\ov{i+1}}\parb_{i+1}^{(2)}\parb_{i}     +\upv\chi_{\ov{i+1}}\parb_{{i+1}}\delta_{{i}}^{-1}-\upv\chi_{\ov{i+1}}\chii \parb_{i}^{(1)}   \parb_{i+1}  \\
&-\chi_{{i+1}}\parb_{\ov{i+1}}\delta_{i}\delta^{-1}_{i+1}+\chi_{i+1}\parb_{\ov{i+1}}\delta^{-1}_{i+1}\delta_{{i}}^{-1}.
\end{aligned}
\end{equation}
However, for $X^\bfa\in \scrA_{\ol{i+1},1}$, since
$$\beta:=\frac{\upv-\upv^{-1}}{\upv+\upv^{-1}}([a_i][a_{i+1}+2]+\upv^{-a_i+1}[a_{i+1}]-\upv[a_{i+1}][a_i+1])+\upv^{-a_{i+1}}(\upv^{-a_i}-\upv^{a_i})=0,$$
it follows that $\Sigma'''(X^\bfa)=\beta X^{\bfa+\bfe_{i+1}-\bfe_{\ol{i+1}}}=0$,
proving $\Sigma''|_{\scrA_{\ol{i+1},1}}=0$ and, hence, the relation \eqref{QQ4} is proven.

The proof of (QQ5) and (QQ6) is similar. Like the proof for (QQ4), it is necessary to break the proof of a relation $\mathcal R=0$ into the cases $\mathcal R|_{\scrA_{\ol{i+1},0}}=0$ and $\mathcal R|_{\scrA_{\ol{i+1},1}}=0$ and apply Lemma \ref{opecom1}. However, unlike the (QQ4) case, no further breaking is required in the proof of (QQ5) and (QQ6). 
\end{proof}

We remark that one may also use \eqref{EFKs} to directly check all relations (QQ1)--(QQ6) are satisfied on basis elements in $\sX$. However, the computation would be even longer. In fact, this verification was carried out by a \textsc{Matlab} program.

\setcounter{equation}{0}
\section{The $n$-fold tensor product $\sTn$ of  $\SW$}

In this section, we investigate the $\qUq$-supermodule structure on the $n$-fold tensor product of the queer polynomial superalgebra:
\begin{equation}\label{sTn}
\sTn= \SW^{\otimes n}.
\end{equation}
By setting $X_{i,j}:=\underbrace{1\otimes\cdots\otimes1}_{j-1}\otimes X_i\otimes1\cdots\otimes1$, the tensor product $\sTn$ may be regarded as the queer polynomial superalgebra $\mathbb Q(\up)[X_{i,j}]_{1\leq i\leq 2n\atop1\leq j\leq n}$ in $X_{i,j}$
(subject to the relations similar to \eqref{suppol}, according to the parity $p(i)+p(j)=p(i)\in\mathbb Z_2$), or as the algebra over the polynomial algebra $\mathbb Q(\up)[X_{i,j}]_{1\leq i,j\leq n}$
with generators $X_{\bar i,j}$ ($1\leq i,j\leq n$) and relations
\begin{equation}\label{sTn}
X_{\bar i,j}X_{\bar k,l}=-X_{\bar k,l}X_{\bar i,j}, \qquad
X_{\bar i,j}^2=\frac{\q-\q^{-1}}{\q+\q^{-1}}X_{i,j}^2,
\end{equation}
for all $1\leq i,j,k,l\leq n,(i,j)\neq (k,l).$
Since we are only interested in its $\qUq$-supermodule structure, we will simply regard $\sTn$ as a tensor superspace.

We first describe a basis for $\sTn$ in terms of matrices. 

 For $A\in M_n(\N|\N)$ as above, let
\begin{equation}\notag
\begin{aligned}%
\noma{ A}
&=X_{1}^{a_{11}^0}\cdots X_{n}^{a_{n1}^0} X_{\bar 1}^{a_{11}^1}\cdots X_{\bar n}^{a_{n1}^1}\otimes \cdots \otimes
  X_{1}^{a_{1n}^0}\cdots X_{n}^{a_{nn}^0} X_{\bar 1}^{a_{1n}^1}\cdots X_{\bar n}^{a_{nn}^1}\\
  &=X^{\mathbf c_1}\otimes X^{\mathbf c_2}\ldots\otimes X^{\mathbf c_n}, \\
\nomab{A}&=X_{1}^{[a_{11}^0]}\cdots X_{n}^{[a_{n1}^0]} X_{\bar 1}^{[a_{11}^1]}\cdots X_{\bar n}^{[a_{n1}^1]}\otimes \cdots \otimes
  X_{1}^{[a_{1n}^0]}\cdots X_{n}^{[a_{nn}^0]} X_{\bar 1}^{[a_{1n}^1]}\cdots X_{\bar n}^{[a_{nn}^1]}\\
  &=X^{[\mathbf c_1]}\otimes X^{[\mathbf c_2]}\ldots\otimes X^{[\mathbf c_n]}, \\
\end{aligned}
\end{equation}
where $\mathbf c_i=\mathbf c_i(\sqA)$ denotes the $i$-th column of $\sqA$.
Then we obtain bases
for $\sTn$
\begin{equation}\label{X-bases}
\{X^{A}\mid A\in M_n(\N|\bZ_2)\}\text{ and }
\{\nomab{A}\mid A\in M_n(\N|\bZ_2) \}.
\end{equation}
The super structure on $\SW$ extends to $\sTn$ with the following parity
\begin{equation}\label{parity2}
p(X^A)=p(A)=|A^1|:=\sum_{1\leq i,j\leq n}a^1_{i,j}.
\end{equation}

The queer polynomial superalgebra $\sTn$ has a decomposition
into its homogeneous components:
\begin{equation}\label{sTnr}
\sTn=\displaystyle\oplus_{r\geq 0}\sTnr,
\end{equation}
where $\sTnr = \span \{ \nomab{A}  \mid A\in  M_n(\bN|\bZ_2)_r\}$. 

Recall the comultiplication defined in \eqref{comult}. Let $\Delta^{(1)}=\Delta$, and for $m\geq 2$, let
$$\Delta^{(m)}=(\Delta\otimes\underbrace{1\otimes\cdots\otimes1}_{m-1})\circ\cdots\circ(\Delta\otimes 1)\circ\Delta.$$
 Then we have, for $n\geq2$, $\Delta^{(n-1)}:\qUq\to\qUq^{\otimes n}$ and 
\begin{equation}
\begin{aligned}\label{comult2}
\Delta^{(n-1)}(\k_i)&= \k_i\otimes\cdots\otimes \k_i,\\
\Delta^{(n-1)}(\e_\i)& =\sum_{1\leq j\leq n} \underbrace{1\otimes 1\cdots \otimes 1}_{j-1} \otimes \e_\i\otimes
\widetilde{\k}_{\i}^{-1}\otimes\cdots\otimes
\widetilde{\k}_{\i}^{-1},  \\
\Delta^{(n-1)}(\f_\i) &= \sum_{1\leq j\leq n} \underbrace{\widetilde{\k}_{\i}\otimes\cdots\otimes
\widetilde{\k}_{\i}}_{j-1}  \otimes \f_\i\otimes  1\otimes\cdots \otimes 1
,  \\
\Delta^{(n-1)}(\k_{\ol1}) &= \sum_{1\leq j\leq n}\underbrace{ \k_{1}^{-1}\cdots\otimes
\k_{1}^{-1} }_{j-1} \otimes \k_{\bar 1}\otimes  \k_{1}\otimes\cdots \otimes \k_{1}
.
\end{aligned}
\end{equation}

The following numbers associated with a given matrix $A=(A^0|A^1)\in M_n(\N|\bZ_2)$ with $A^0=(a_{i,j}^0)$ and $A^1=(a_{i,j}^1)$ and $ \i\in [1,n), j\in [2,n]$ will be used in the action formulas of $\sfK_i,\sfE_h, \sfF_h, \sfK_{\bar1}$ on $\sTn$ in Lemma \ref{genmul} and Theorem \ref{mulfor}.
\begin{equation}\label{sigma pm}
\begin{aligned}
&\sigma_{\sce}^+(\i,j,A)={a_{\i+1,j}^1+{\sum_{j< t\leq n}(-a_{\i,t}^0-a_{\i,t}^1+a_{\i+1,t}^0+a_{\i+1,t}^1)} },\\
&\sigma_{\sce}^-(\i,j,A)={-a_{\i+1,j}^0 +{\sum_{j< t\leq n}(-a_{\i,t}^0-a_{\i,t}^1+a_{\i+1,t}^0+a_{\i+1,t}^1)} },\\
&\sigma_{\scf}^+(\i,j,A)={-a_{\i,j}^1+{\sum_{1\leq t< j}(a_{\i,t}^0+ a_{\i,t}^1 -a_{\i+1,t}^0-a_{\i+1,t}^1)} },\\
&\sigma_{\scf}^-(\i,j,A)={a_{\i,j}^0+{\sum_{1\leq t< j}(a_{\i,t}^0+ a_{\i,t}^1 -a_{\i+1,t}^0-a_{\i+1,t}^1)}},\\
&\sigma_\sck^+(\bar1,j,A)={-a_{1j}^0-\sum_{1\leq t< j}(a_{1,t}^0+a_{1,t}^1)+ \sum_{j< t\leq n}(a_{1,t}^0+a_{1,t}^1)},\\
&\sigma_\sck^-(\bar1,j,A)=
          {a_{1j}^1-\sum_{1\leq t< j}(a_{1,t}^0+a_{1,t}^1)+ \sum_{j< t\leq n}(a_{1,t}^0+a_{1,t}^1)}\\
          & \fks_1(A^1)=0,\quad \fks_j(A^1)={\sum_{1\leq t< j, 1\leq s \leq n} a_{s,t}^1}\;(j\geq 2).
\end{aligned}
\end{equation}
Here the subscript $\sce$ indicates the $E$-action formula, etc.


We now extend the $\qUq$-action formulas in $\SW$ given in Theorem \ref{supalg} to the tensor product $\sTn$ via \eqref{comult2}.  By Remark \ref{gen rel}(2), it suffices to just consider the actions of the generators $\sfK_i,\sfE_h,\sfF_h,\sfK_{\bar 1}$.
\begin{lemma}\label{genmul} Maintain the notation above.
The  $\qUq$-supermodule $\sTn $ is spanned by  the basis
$$\{ \nomab{A} \mid A=(A^0|A^1)\in  M_n(\bN|\bZ_2)\}$$
with the following actions by generators:
\begin{enumerate} 
\item \quad\;\;$\k_i.\nomab{A} =\q^{\sum_{1\leq j\leq n}(a_{i,j}^0+a_{i,j}^1) }\nomab{A}$;
\item \quad\\\vspace{-5ex}
$$\aligned
\e_\i.\nomab{A}
=&\sum_{1\leq j\leq n;\, a_{\i+1,j}^0\neq 0} \q^{\sigma_\sce^+(\i,j,A)}
    [a_{\i ,j}^0+1]\nomab{A^0+E_{\i,j}-E_{\i+1,j}|A^1}\\
  &+\sum_{1\leq j\leq n;\, a_{\i+1,j}^1\neq 0} \q^{\sigma_\sce^-(\i,j,A)}
    [ a_{\i, j}^1+1]\nomab{A^0|A^1+E_{\i,j}-E_{\i+1,j}}.
\endaligned$$
\item \quad\\\vspace{-6ex}
$$
\aligned
\f_\i.\nomab{A}=&\sum_{1\leq j\leq n;\, a_{\i,j}^0\neq 0} \q^{\sigma_\scf^+(\i,j,A)}
    [a_{\i+1,j}^0+1]\nomab{A^0-E_{\i,j}+E_{\i+1,j}|A^1}\\
  &+\sum_{1\leq j\leq n;\, a_{\i,j}^1\neq 0} \q^{\sigma_\scf^-(\i,j,A)}
    [ a_{\i+1,j}^1+1]\nomab{A^0|A^1-E_{\i,j}+E_{\i+1,j}}.
\endaligned$$

\item\quad\\\vspace{-6ex}
$$\aligned
\quad\k_{\bar 1}.\nomab{A}
=&\sum_{1\leq j\leq n;\, a_{1,j}^1\neq 0}
                  (-1)^{\fks_j(A^1)} \q^{\sigma_\sck^+(\bar1,j,A) } [ a_{1,j}^0+1] \nomab{A^0+E_{1,j}|A^1-E_{1,j}}\\
 &+ \sum_{1\leq j\leq n;\, a_{1,j}^0\neq 0}(-1)^{\fks_j(A^1)}
     \q^{\sigma_\sck^-(\bar1,j,A) } [a_{1,j}^1+1]\nomab{A^0-E_{1,j}|A^1+E_{1,j}}.
\endaligned$$
\end{enumerate}

In particular, for every $r\geq 0$, $\sTnr$ is a $\qUq$-subsupermodule.

\end{lemma}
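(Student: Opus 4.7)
The plan is to apply the iterated comultiplication formula \eqref{comult2} to express the action of each generator $Z$ on $X^{[A]}=X^{[\mathbf c_1]}\otimes\cdots\otimes X^{[\mathbf c_n]}$ as a sum over tensor positions $j$, and then evaluate each resulting simple tensor using the single-factor formulas \eqref{U-actions} from Theorem \ref{supalg}. The case $Z=\sfK_i$ is immediate: $\Delta^{(n-1)}(\sfK_i)=\sfK_i^{\otimes n}$, and each $\sfK_i$ acts diagonally on $X^{[\mathbf c_t]}$ with eigenvalue $\up^{a_{i,t}^0+a_{i,t}^1}$, so multiplying over $t$ yields (1).

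For $Z\in\{\sfE_h,\sfF_h,\sfK_{\bar 1}\}$ the $j$-th summand of $\Delta^{(n-1)}(Z)$ places $Z$ at position $j$ and puts $\widetilde{\sfK}_h^{\mp 1}$ (respectively $\sfK_1^{\pm 1}$ for $\sfK_{\bar 1}$) at the remaining positions. Each such weight-type operator acts diagonally, and I would collect the product of these eigenvalues first; summed over the appropriate range of $t$, this produces the bulk of the $\up$-exponent $\sigma^\pm$. Then I would apply \eqref{U-actions} to the factor $X^{[\mathbf c_j]}$: the action of $Z$ produces two terms, one shifting an even index, one shifting an odd index. To reconcile these with the divided-power normalization I use $X^{\bfa}=\prod_i[a_i]!\,X^{[\bfa]}$, which converts a coefficient like $[a_{h+1,j}^0]$ into $[a_{h,j}^0+1]$ and analogously in the odd case. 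Combining the local $\up$-power coming from $Z$ at position $j$ with the bulk exponent from the weight-operator factors yields precisely $\sigma_\sce^\pm$, $\sigma_\scf^\pm$, or $\sigma_\sck^\pm$ as defined in \eqref{sigma pm}.

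The delicate step is the sign bookkeeping for odd generators. Since $\sfK_{\bar 1}$ is odd, inserting it into the $j$-th tensor slot requires moving it past the first $j-1$ factors in the $\Z_2$-graded tensor product, which produces the sign $(-1)^{\sum_{t<j}p(X^{[\mathbf c_t]})}$; by \eqref{parity2} this equals $(-1)^{\fks_j(A^1)}$, accounting for the signs in (4). I plan to isolate these signs from the $\up$-exponents and the $[\cdot]$-coefficients so that the final assembly is essentially mechanical.

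The main obstacle is handling the case where a shift would formally produce an odd matrix entry equal to $2$. Here the defining relation $X_{\bar h,j}^2=\frac{\up-\up^{-1}}{\up+\up^{-1}}X_{h,j}^2$ from \eqref{sTn} must be invoked to rewrite the would-be illegal term as a contribution to the even-shift sum, and one must check that its coefficient agrees with the even coefficient already present. Once this is verified for each of $\sfE_h$, $\sfF_h$, and $\sfK_{\bar 1}$, the final assertion about $\sTnr$ is immediate, because every formula in (2)--(4) preserves the total weight $|A|=\sum_{i,j}(a_{i,j}^0+a_{i,j}^1)$, so each generator stabilizes $\sTnr$ and hence $\sTnr$ is a $\qUq$-subsupermodule.
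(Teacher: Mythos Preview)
Your approach is essentially the paper's: apply the iterated comultiplication \eqref{comult2}, collect the diagonal eigenvalues of the $\widetilde{\sfK}_h^{\pm1}$ or $\sfK_1^{\pm1}$ factors into the $\sigma^\pm$ exponents, rewrite \eqref{U-actions} in divided-power form, and substitute. The sign handling for $\sfK_{\bar 1}$ via the graded tensor sign rule is exactly how the paper does it (see \eqref{signte} and the passage right after it).

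The one place you go wrong is your final paragraph. There is \emph{no} obstacle to handle here, and your proposed resolution (``rewrite the would-be illegal term as a contribution to the even-shift sum, and check that its coefficient agrees with the even coefficient already present'') is incorrect. The element $X^{[B]}$ is defined for any $B\in M_n(\N|\N)$, not just $M_n(\N|\Z_2)$; so when, say, $a_{h,j}^1=a_{h+1,j}^1=1$, the term $[a_{h,j}^1+1]\,X^{[A^0|A^1+E_{h,j}-E_{h+1,j}]}$ is a perfectly good element of $\sTn$ and the formula in the lemma is stated exactly in that form. It is \emph{not} a basis element, and the paper does not claim it is; the reduction of such elements to the basis $\{X^{[A]}:A\in M_n(\N|\Z_2)\}$ is postponed to Lemma~\ref{close} (see also Remark~\ref{odd part}(1)). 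If you actually tried to match coefficients with the even-shift sum you would find they do not agree, because the odd term with entry $2$ is a genuinely separate contribution, not a correction to an existing even term. Simply delete that paragraph and your proof is complete.
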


\begin{proof} Observe from \eqref{U-actions} that, if $a_i=0$, then the term involving $[a_i]=0$ is 0.  With this in mind, the divided power version of \eqref{U-actions} (cf. \eqref{monomial}) becomes
\begin{equation}\label{U-[actions]}
\begin{aligned}
\k_i X^{[\bfa]}&=\v^{a_i+a_{\bar i}}    X^{[\bfa]},\\
\e_\i X^{[\bfa]}&=\delta_{1,a_{h+1}}^\leq\v^{a_{\overline{\i+1}}}[a_{\i}+1] X^{[\bfa+\bfe_\i-\bfe_{\i+1}]}
                +\delta_{1,a_{\ol{h+1}}}\v^{-a_{\i+1}}[ a_{\overline{\i}}+1]  X^{[\bfa+\bfe_{\bar\i}-\bfe_{\overline{\i+1}}]},\\
 \f_\i X^{[\bfa]}&=\delta_{1,a_{h}}^\leq\v^{-a_{\bar{\i}}}[ a_{\i+1}+1]   X^{[\bfa-\bfe_\i+\bfe_{\i+1}]}
                 +\delta_{1,a_{\ol{h}}}\v^{a_{\i}}[a_{\ol{\i+1}}+1]    X^{[\bfa-\bfe_{\bar \i}+\bfe_{\overline{\i+1}}]},\\
    \k_{\bar i} X^{[\bfa]}&=(-1)^{\sum_{1\leq j < i }a_{\bar j}} \bigg(\delta_{1,a_{\ol{i}}} \v^{-a_{i}}[ a_{i}+1]
                          X^{[\bfa+\bfe_i-\bfe_{\bar i}]}+\delta_{1,a_{i}}^\leq \v^{a_{\bar i}}[a_{\bar i}+1]
                          X^{[\bfa-\bfe_{i}+\bfe_{\bar i}]}\bigg),\\
\end{aligned}
\end{equation}
where
 $\delta_{i,j}^\leq=\begin{cases}1,&\text{ if }i\leq j;\\0,&\text{ otherwise.}\end{cases}$
The $\delta$'s indicate when a term is 0, but make the formulas  a bit unpleasant to read.\footnote{If we make the convention that $X^\bfb=0$ whenever a component of $\bfb$ is negative, the $\delta$'s can be removed.}

Recall also the sign rule: for supermodules $V_1, V_2$ of a superalgebra $\mathcal A$, \begin{equation}\label{signte}
\begin{aligned}
(g_1\otimes g_2). (v_1\otimes v_2)=(-1)^{\bar{g}_2\bar{v}_1}g_1v_1\otimes g_2v_2\quad\text{for all }g_1, g_2\in\mathcal A, v_1,v_2\in V_i.
\end{aligned}
\end{equation}
Via \eqref{comult2}, the even generators $\k_i,\e_i,\f_i$ acts on $X^{[A]}$, for any $A=(A^0|A^1)\in  M_n(\N|\bZ_2)$ with columns ${\bfaa_{i}}=\bfaa_i(\sqA)$ and row $\mathbf r_i=\mathbf r_i(A)$ as follows:
\begin{equation} \notag
\begin{aligned}
\k_i.\nomab{A}&=(\k_i\otimes\cdots\otimes \k_i)(\nomab{\bfaa_{1}}\otimes \nomab{\bfaa_{2}} \cdots \otimes
 \nomab{\bfaa_{n}})
 =\q^{\sum_{1\leq j\leq n}(a_{i,j}^0+a_{i,j}^1) }\nomab{A},\\
\e_\i.\nomab{A}&=\sum_{1\leq j\leq n} (1\cdots \otimes 1 \otimes \underset{(j)}{\e_\i}\otimes
                 \widetilde{\k}_{\i}^{-1}\cdots\otimes \widetilde{\k}_{\i}^{-1})(\nomab{\bfaa_{1}}\otimes \nomab{\bfaa_{2}} \otimes\cdots \otimes
 \nomab{\bfaa_{n}})\\
 &=\sum_{1\leq j\leq n} \q^{\sum_{j< t\leq n}(-a_{\i,t}^0-a_{\i,t}^1+a_{\i+1,t}^0+a_{\i+1,t}^1)}
    (\cdots \otimes \nomab{\bfaa_{j-1}}\otimes\e_\i \nomab{\bfaa_{j}} \otimes  \nomab{\bfaa_{j+1}}\otimes \cdots)\\
         &=\sum_{1\leq j\leq n} \q^{ -\bfr_{h,(j,n]}+\bfr_{h+1,(j,n]}}
   \cdot \nomab{\bfaa_{1}}\cdots \nomab{\bfaa_{j-1}}\otimes\e_\i \nomab{\bfaa_{j}}\otimes  \nomab{\bfaa_{j+1}} \cdots \nomab{\bfaa_{n}},\\
  \end{aligned}
 \end{equation}
and
\begin{equation}\notag
\begin{aligned}
\f_\i.\nomab{A}&=\sum_{1\leq j\leq n} (\widetilde{\k}_{\i} \cdots\otimes \widetilde{\k}_{\i}   \otimes \underset{(j)}{\f_\i}\otimes  1\cdots \otimes 1)
          (\nomab{\bfaa_{1}}\otimes \nomab{\bfaa_{2}} \cdots \otimes  \nomab{\bfaa_{n}})\\
&=\sum_{1\leq j\leq n} \q^{\sum_{1\leq t< j}(a_{\i,t}^0+ a_{\i,t}^1 -a_{\i+1,t}^0-a_{\i+1,t}^1)}
(\cdots \otimes \nomab{\bfaa_{j-1}}\otimes\f_\i \nomab{\bfaa_{j}} \otimes  \nomab{\bfaa_{j+1}}\otimes \cdots)\\
         &=\sum_{1\leq j\leq n} \q^{\bfr_{h,[1,j)}-\bfr_{h+1,[1,j)}}
    \cdot\nomab{\bfaa_{1}}\cdots \nomab{\bfaa_{j-1}}\otimes \f_\i \nomab{\bfaa_{j}}\otimes \nomab{\bfaa_{j+1}} \cdots \nomab{\bfaa_{n}},\\  \end{aligned}
\end{equation}
where \begin{equation}\label{row partial sum}
\mathbf r_{i,(j,n]}=\sum_{j+1\leq t\leq n}(a_{i,t}^0+a_{i,t}^1),\qquad
\mathbf r_{i,[1,j)}=\sum_{1\leq t\leq j-1}(a_{i,t}^0+a_{i,t}^1).\end{equation}
But, by \eqref{U-[actions]} with the convention in the previous footnote, 
$$\aligned
\e_\i \nomab{\bfaa_{j}} &=\q^{a_{\i+1,j}^1 }[a_{\i,j}^0+1] \nomab{\bfaa_{j}+\bfe_\i-\bfe_{\i+1}}
    +\q^{-a_{\i+1,j}^0}[ a_{\i,j}^1+1]  \nomab{\bfaa_{j}+\bfe_{n+\i}-\bfe_{n+\i+1}}\\
\f_\i \nomab{\bfaa_{j}}&=\q^{-a_{\i,j}^1 }[a_{\i+1, j}^0+1] \nomab{\bfaa_{j}-\bfe_\i+\bfe_{\i+1}}
    +\q^{a_{\i,j}^0}[ a_{\i+1, j}^1+1]  \nomab{\bfaa_{j}-\bfe_{n+\i}+\bfe_{n+\i+1}}.
\endaligned
$$
Substituting gives
\begin{equation} \notag
\begin{aligned}
\e_\i.\nomab{A}&=\sum_{1\leq j\leq n} \q^{a_{\i+1,j}^1 -\bfr_{h,(j,n]}+\bfr_{h+1,(j,n]}}
    [a_{\i,j}^0+1]\nomab{A^0+E_{\i,j}-E_{\i+1,j}|A^1}\\
  &\quad\,+\sum_{1\leq j\leq n} \q^{-a_{\i+1,j}^0  -\bfr_{h,(j,n]}+\bfr_{h+1,(j,n]}}
    [ a_{\i,j}^1+1]\nomab{A^0|A^1+E_{\i,j}-E_{\i+1,j}}\\
  &=\sum_{1\leq j\leq n} \q^{\sigma_\sce^+(\i,j,A)}
    [a_{\i,j}^0+1]\nomab{A^0+E_{\i,j}-E_{\i+1,j}|A^1}\\
    &\quad\,+\sum_{1\leq j\leq n} \q^{\sigma_\sce^-(\i,j,A)}
    [ a_{\i,j}^1+1]\nomab{A^0|A^1+E_{\i,j}-E_{\i+1,j}},
\end{aligned}
\end{equation}
as desired. The $F_h$ case is similar.

For the odd generator $\k_{\bar1}$,  the sign rule (\ref{signte}) applies.
Thus,
\begin{equation} \notag
\begin{aligned}
\k_{\bar 1}.\nomab{A}&=\sum_{1\leq j\leq n} (\k_{1}^{-1}\cdots\otimes \k_{1}^{-1}  \otimes \underset{(j)}{\k_{\bar 1}}\otimes  \k_{1}\cdots \otimes \k_{1})
          (\nomab{\bfaa_{1}}\otimes \nomab{\bfaa_{2}} \cdots \otimes  \nomab{\bfaa_{n}})\\
 &=\sum_{1\leq j\leq n} (-1)^{\fks_j(A^1)}
     \q^{-\mathbf r_{1,[1,j)}+\mathbf r_{1,(j,n]}}\cdot X^{[\mathbf c_1]}\cdots \nomab{\bfaa_{j-1}}\otimes\k_{\bar 1} \nomab{\bfaa_{j}} \otimes  \nomab{\bfaa_{j+1}} \cdots X^{[\mathbf c_n]}\\
  \end{aligned}
\end{equation}
Now, $\k_{\bar 1} \nomab{\bfaa_{j}} =  \q^{-a_{1,j}^0}[ a_{1,j}^0+1]
                          \nomab{\bfaa_{j}+\bfe_1-\bfe_{n+1}}
             + \q^{a_{1,j}^1}[a_{1,j}^1+1]
                          \nomab{\bfaa_{j}+\bfe_{n+1}-\bfe_{1}}$. Substituting gives the required formula.
\end{proof}

For an $n\times n$ matrix $B=(b_{i,j})$, let
$$\ro(B)=(\sum_{i=1}^nb_{1,i},\sum_{i=1}^nb_{2,i},\ldots,\sum_{i=1}^nb_{n,i}),\;\;\co(B)=(\sum_{i=1}^nb_{i,1},\sum_{i=1}^nb_{i,2},\ldots,\sum_{i=1}^nb_{i,n})$$
and, for $A=(A^0|A^1)\in M_n(\bN|\bZ_2)$,
let 
\begin{equation}\label{roco}
\ro(A)=\ro(A^0)+\ro(A^1),\;\;\co(A)=\co(A^0)+\co(A^1).
\end{equation} 


\begin{corollary}\label{poly1} For every $A\in M_n(\N|\Z_2)$, $\wt(X^{[A]})=\ro(A)$.
Hence, $\sTn$ and $\sTnr$ are polynomial weight supermodules with $\wt(\sTn)=\N^n$ and $\wt(\sTnr)=(\N^n)_r:=\{(a_i)\in\N^n\mid\sum_ia_i=r\}$, respectively.
\end{corollary}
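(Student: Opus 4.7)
The first statement is an immediate consequence of Lemma \ref{genmul}(1): the action formula
\[
\sfK_i.X^{[A]}=\up^{\sum_{1\leq j\leq n}(a_{i,j}^0+a_{i,j}^1)}X^{[A]}
\]
shows that $X^{[A]}$ is a joint eigenvector for all $\sfK_1,\ldots,\sfK_n$, and the eigenvalue exponent on $\sfK_i$ is precisely the $i$-th coordinate of $\ro(A^0)+\ro(A^1)=\ro(A)$ (see \eqref{roco}). Hence $\wt(X^{[A]})=\ro(A)$, and in particular $\ro(A)\in\N^n$ for every $A\in M_n(\N|\Z_2)$.

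Next, since $\{X^{[A]}\mid A\in M_n(\N|\Z_2)\}$ is a basis of $\sTn$ by \eqref{X-bases}, one obtains the weight space decomposition
\[
\sTn=\bigoplus_{\la\in\N^n}(\sTn)_\la,\quad (\sTn)_\la=\span\{X^{[A]}\mid \ro(A)=\la\},
\]
so $\sTn$ is a polynomial weight supermodule. To determine $\wt(\sTn)$, I would note that $(\sTn)_\la\neq 0$ for any $\la\in\N^n$: the diagonal matrix $A=(\diag(\la_1,\ldots,\la_n)|0)\in M_n(\N|\Z_2)$ satisfies $\ro(A)=\la$, and the corresponding basis element $X^{[A]}$ is nonzero. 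This gives $\wt(\sTn)=\N^n$.

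For $\sTnr$, recall from \eqref{sTnr} that $\sTnr=\span\{X^{[A]}\mid A\in M_n(\N|\Z_2)_r\}$, and from \eqref{MnNZ} that $M_n(\N|\Z_2)_r$ consists of those $A$ with $\sum_{i,j}(a_{i,j}^0+a_{i,j}^1)=r$. Since
\[
\sum_{i=1}^n\ro(A)_i=\sum_{i,j}(a_{i,j}^0+a_{i,j}^1),
\]
each basis element of $\sTnr$ has weight in $(\N^n)_r$, so $\wt(\sTnr)\subseteq(\N^n)_r$. Conversely, for any $\la\in(\N^n)_r$ the diagonal construction above again produces a matrix $A\in M_n(\N|\Z_2)_r$ with $\ro(A)=\la$, showing equality. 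This completes the proof; no genuine obstacle arises, as everything reduces to reading off the $\sfK_i$-eigenvalues in Lemma \ref{genmul}(1) and exhibiting suitable diagonal matrices to realise all desired weights.
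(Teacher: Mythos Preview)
Your proof is correct and follows the same approach as the paper: both derive $\wt(X^{[A]})=\ro(A)$ directly from the $\sfK_i$-action formula in Lemma \ref{genmul}(1), and the remaining assertions then follow. Your version is simply more explicit about exhibiting diagonal matrices to realise every weight in $\N^n$ (resp.\ $(\N^n)_r$), whereas the paper compresses this into the single sentence ``Our assertion follows from Lemma \ref{genmul}.''
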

\begin{proof}
By Lemma \ref{genmul}(1), we see that the weight of $X^{[A]}$ is $\ro(A)$. Our assertion follows from Lemma \ref{genmul}.
\end{proof}

\section{Some action formulas in the completion $\wsTn$ of $\sTn$}
In this section, we use the bases for $\sTn$ given in \eqref{X-bases} to introduce its completion
\begin{equation}\label{direct product}
\wsTn:=\prod_{A\in M_n(\N|\bZ_2)}\mathbb Q(\up)X^A=\prod_{r\geq 0}\sTnr.
\end{equation}
If we identify $\sTn$ with the queer polynomial superalgebra ${\mathbb Q(\up)}[X_{i,j}]_{1\leq i\leq 2n\atop1\leq j\leq n}$, then
$\wsTn$ may be regarded as the algebra, defined over the formal power series algebra ${\mathbb Q(\up)}[[X_{i,j}]]_{1\leq i, j\leq n}$, with generators $X_{\bar i,j}$ ($1\leq i,j\leq n$) and relations \eqref{sTn}. 

Note that the $\qUq$-module structure on $\sTn$ extends to $\wsTn$. However, the superspace structure on $\sTn$ cannot be extended to $\wsTn$. We now construct a (supers) subpace $\sVn$ of $\wsTn$ and extend the action of $\qUq$ on $\sTn$ to $\sVn$. We will see in the next section that $\sVn$ is in fact a $\qUq$-supermodule.

Recall the matrix sets $M_n(\N|\N)'$ and 
$M_n(\N|\Z_2)'$ defined in \eqref{MnNZ'}.
We may identify
$M_n(\N|\N)'\times \N^n$ with $M_n(\N|\N)$ by sending $(A,\la)$,
for  $\lambda\in\mathbb N^n,  A\in M_n(\N|\N)',$ to
$A+\la$, where
\begin{equation}\label{A+la}
  A+\lambda:=(A^0+\diag(\lambda)| A^1).
  \end{equation}

According to the definitions of $\sigma_*^\pm$ in \eqref{sigma pm}, we see easily
$$\sigma_{*}^\pm(\i,j,A+\lambda)=\sigma_{*}^\pm(\i,j,A)+\sigma_{*}^\pm(\i,j,(0|0)+\lambda).$$
We list the following relations for later use in the proof of Theorem \ref{mulfor}.
\begin{lemma}\label{effpro}
For $A\in  M_n(\bN|\N)', \lambda\in\mathbb N^n, \i,j\in[1,n],$   then\vspace{1ex}

\begin{itemize}
\item[(1)] $\sigma_\sce^+(\i,j,A+\lambda)=
\begin{cases}
\sigma_\sce^+(\i,j,A){-\lambda_{\i}}+\lambda_{\i+1}, & j\leq \i-1;\\
\sigma_\sce^+(\i,j,A){+\lambda_{\i+1}}, & j=h;\\
\sigma_\sce^+(\i,j,A), & j\geq  \i+1.
\end{cases}$\vspace{1ex}
\item[(2)]$\sigma_\sce^-(\i,j,A+\lambda)=
\begin{cases}
\sigma_\sce^-(\i,j,A){-\lambda_{\i}}+\lambda_{\i+1}, & j\leq \i-1;\\
\sigma_\sce^-(\i,j,A){+\lambda_{\i+1}}, & j=\i;\\
\sigma_\sce^-(\i,j,A){-\lambda_{\i+1}}, & j= \i+1 ;\\
\sigma_\sce^-(\i,j,A), & j\geq \i+2.
\end{cases}$\vspace{1ex}
\item[(3)]$\sigma_\scf^+(\i,j,A+\lambda)=
\begin{cases}
\sigma_\scf^+(\i,j,A), & j\leq \i;\\
\sigma_\scf^+(\i,j,A)+{\lambda_{\i}}, & j= \i+1;\\
\sigma_\scf^+(\i,j,A)+{\lambda_{\i}}-{\lambda_{\i+1}}, & j\geq \i+2.
\end{cases}$\vspace{1ex}
\item[(4)] $\sigma_\scf^-(\i,j,A+\lambda)=
\begin{cases}
\sigma_\scf^-(\i,j,A), & j\leq \i-1;\\
\sigma_\scf^-(\i,j,A)+{\lambda_{\i}}, & j=\i, \i+1 ;\\
\sigma_\scf^-(\i,j,A)+{\lambda_{\i}}-\lambda_{\i+1}, & j\geq\i+2.
\end{cases}$\vspace{1ex}
\item[(5)] $\sigma_\sck^+(\bar1,j,A+\lambda)=\sigma_\sck^+(\bar1,j,A){-\lambda_{1}}.$\vspace{1ex}
\item[(6)] $\sigma_\sck^-(\bar1,j,A+\lambda)=
\begin{cases}
\sigma_\sck^+(\bar1,j,A),             & j=1;\\
\sigma_\sck^+(\bar1,j,A){-\lambda_{1}}, & j\geq 2.
\end{cases}$
\end{itemize}
\end{lemma}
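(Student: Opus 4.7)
\smallskip

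\textbf{Proof proposal for Lemma \ref{effpro}.} The plan is to carry out a direct bookkeeping argument using the defining formulas in \eqref{sigma pm}. The key observation, which makes the whole lemma essentially a careful enumeration, is that the passage from $A$ to $A+\lambda$ affects only the diagonal entries of $A^0$: more precisely, $(A+\lambda)^0_{i,i}=a^0_{i,i}+\lambda_i=\lambda_i$ (since $A\in M_n(\N|\N)'$ forces $a^0_{i,i}=0$), while all off-diagonal entries of $A^0$ and all entries of $A^1$ are unchanged. Consequently, for each of the six functions $\sigma_*^{\pm}$, the increment $\sigma_*^{\pm}(\i,j,A+\lambda)-\sigma_*^{\pm}(\i,j,A)$ is a $\Z$-linear combination of the $\lambda_k$'s, and the coefficient of $\lambda_k$ is exactly the coefficient of $a^0_{k,k}$ in the defining expression of $\sigma_*^{\pm}(\i,j,A)$.

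With this dictionary in mind, I would handle the six items in turn. For (1)--(2) I inspect the $E$-formulas: the leading summand $a^1_{\i+1,j}$ (resp.\ $-a^0_{\i+1,j}$) is off-diagonal in the diagonal/$\diag(\lambda)$ sense except when $j=\i+1$ in the minus case, which yields the $-\lambda_{\i+1}$ correction in (2). The tail sum $\sum_{j<t\le n}(-a^0_{\i,t}+a^0_{\i+1,t}+\cdots)$ picks up a diagonal contribution $-\lambda_{\i}$ precisely when $j<\i$ (so that $t=\i$ is included) and a contribution $+\lambda_{\i+1}$ precisely when $j<\i+1$ (so that $t=\i+1$ is included). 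Splitting into the three ranges $j\le\i-1$, $j=\i$, $j\ge\i+1$ gives (1), and the extra $\delta_{j,\i+1}$ correction from the leading $-a^0_{\i+1,j}$ gives the second line of (2) together with the fourth range $j\ge\i+2$.

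Items (3)--(4) follow by the mirror argument for the $F$-formulas: the initial summand $-a^1_{\i,j}$ (resp.\ $a^0_{\i,j}$) is diagonal only when $j=\i$ in the minus case, producing the $+\lambda_\i$ in the second line of (4); the tail sum $\sum_{1\le t<j}(a^0_{\i,t}-a^0_{\i+1,t}+\cdots)$ picks up $+\lambda_\i$ iff $t=\i$ is in the range (i.e.\ $j\ge\i+1$) and $-\lambda_{\i+1}$ iff $t=\i+1$ is in the range (i.e.\ $j\ge\i+2$). Items (5)--(6) are the $\i=1$ analogues: only $a^0_{1,1}=\lambda_1$ can enter, giving $-\lambda_1$ from the tail sums for all $j\ge 2$, with the $j=1$ value of $-a^0_{1,j}$ in $\sigma^+_\sck$ contributing an extra $-\lambda_1$ for $j=1$ (absorbed into the uniform $-\lambda_1$ statement in (5)), while in $\sigma^-_\sck$ the leading $a^1_{1,j}$ is odd-part and contributes nothing, leaving the split between $j=1$ and $j\ge 2$ in (6).

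There is no real obstacle here; the whole proof is a mechanical case-by-case tabulation of where a diagonal entry $a^0_{k,k}$ can occur in each expression. The only thing to be careful about is the sign convention: the term $-a^0_{\i+1,j}$ in $\sigma_\sce^-$ and the term $+a^0_{\i,j}$ in $\sigma_\scf^-$ produce \emph{additional} $\lambda$-corrections that disappear in the $+$-versions, and this is the sole source of the asymmetry between (1) and (2), and between (3) and (4). I would therefore present the argument either as a short unified calculation ``coefficient of $\lambda_k$ equals coefficient of $a^0_{k,k}$'' followed by a table, or, as is common in the literature, as six one-line verifications.
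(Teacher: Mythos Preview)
Your proposal is correct and is exactly the kind of direct verification the paper has in mind: the paper does not give a proof of this lemma at all, merely recording just before it the additivity $\sigma_*^\pm(\i,j,A+\lambda)=\sigma_*^\pm(\i,j,A)+\sigma_*^\pm(\i,j,(0|0)+\lambda)$, which is precisely your ``coefficient of $\lambda_k$ equals coefficient of $a^0_{k,k}$'' observation. Your case-by-case tabulation is thus a fully fleshed-out version of what the authors leave implicit.
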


For  $\bfj\in \mathbb Z^n,  A\in M_n(\N|\N)',$  define elements
\begin{equation}\label{A(j)}
 A(\bfj)
 =\sum_{\lambda\in \mathbb N^n}
 \q^{\lambda\cdot \bfj}\nomab{A+\lambda}\in \wsTn,
\end{equation}
where $\lambda\cdot\bfj=\la_1j_1+\cdots+\la_nj_n$.

For convenience, we set $ A(\bfj)=0$ as long as there exists a negative entry in $A$.

We will mainly interested in those $A(\bfj)$ with $A\in M_n(\N|\Z_2)'$.
 By assigning $A(\bfj)$ to the parity $p(A)=|A^1|\in\Z_2$ defined in \eqref{parity2}, we obtains a superspace $\sVn$ spanned by $A(\bfj)$ for all $\bfj\in \mathbb Z^n,  A\in M_n(\N|\Z_2)'$.


\begin{lemma}\label{basrel}
Let $A=(A^0|A^1)\in  M_n(\bN|\bZ_2)', \bfj=(j_1,\ldots,j_n)\in \Z^n,i,h\in [1,n] $. Then\\
\begin{itemize}
\item[(a)] $\displaystyle\sum_{\lambda\in \mathbb N^n} \q^{\lambda\cdot \bfj\pm\lambda_i\pm\lambda_\i}
     \nomab{A^0+\lambda|A^1}=  A(\bfj\pm\bfe_i\pm\bfe_\i);$\\
\item[(b)] $\displaystyle \sum_{\lambda\in \mathbb N^n}
    \q^{\lambda\cdot \bfj }
     \nomab{A^0+\lambda-E_{i,i}|A^1}=  \ups^{j_i}  A (\bfj);$\\
\item[(c)] $\displaystyle\sum_{\lambda\in \mathbb N^n}
  \q^{\lambda\cdot \bfj +\lambda_{i}}
              [\lambda_\i+1] \nomab{A^0+E_{\i,\i}+\lambda|A^1} 
=  \frac{\q^{-j_{\i}}}{\ups-\ups^{-1}}\big( A(\bfj+\bfe_{\i} +\bfe_{i})
     -A( \bfj +\bfe_{i}-\bfe_{\i})\big),$
     where $i\neq h$;\\
 \item[(d)] $\displaystyle\sum_{\lambda\in \mathbb N^n}
   \q^{\lambda\cdot \bfj-\lambda_1 }
                            [\lambda_1+1] \nomab{A^0+E_{1,1}+\lambda|A^1}  =    \frac{\q^{-j_1+1}}{\ups-\ups^{-1}}\big(A( \bfj )
     -A( \bfj -2\bfe_{1} )\big).$
\end{itemize}
\end{lemma}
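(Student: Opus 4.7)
\noindent\textbf{Proof Plan for Lemma \ref{basrel}.} All four identities are elementary consequences of the definition
$A(\bfj)=\sum_{\lambda\in\mathbb N^n}\upsilon^{\lambda\cdot\bfj}X^{[A+\lambda]}$
combined with the convention $X^{[B]}=0$ whenever $B$ has a negative entry, plus the identity $[m]=\frac{\upsilon^m-\upsilon^{-m}}{\upsilon-\upsilon^{-1}}$. The plan is to perform a direct index shift in each case.

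For (a), I would simply observe that $\lambda\cdot\bfj\pm\lambda_i\pm\lambda_h=\lambda\cdot(\bfj\pm\bfe_i\pm\bfe_h)$, so the identity is immediate from the definition of $A(\bfj)$ (no shift required; the case $i=h$ is also covered since then $\pm\lambda_i\pm\lambda_h=\pm2\lambda_i$ produces $\pm 2\bfe_i$).

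For (b), since $A^0$ has zero diagonal, the $(i,i)$-entry of $A^0+\lambda-E_{i,i}$ equals $\lambda_i-1$, so the convention forces only terms with $\lambda_i\geq 1$ to survive. Substituting $\mu=\lambda-\bfe_i$ gives $\lambda\cdot\bfj=\mu\cdot\bfj+j_i$ and $A^0+\lambda-E_{i,i}=A^0+\mu$, and the sum collapses to $\upsilon^{j_i}A(\bfj)$.

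For (c) and (d), the main step is the substitution $\mu=\lambda+\bfe_h$ (respectively $\mu=\lambda+\bfe_1$), which turns $A^0+E_{h,h}+\lambda$ into $A^0+\mu$ with the restriction $\mu_h\geq 1$; this in turn converts $[\lambda_h+1]$ into $[\mu_h]$ and shifts $\lambda\cdot\bfj$ by $-j_h$ (respectively $-j_1$). Writing $[\mu_h]=\frac{\upsilon^{\mu_h}-\upsilon^{-\mu_h}}{\upsilon-\upsilon^{-1}}$ splits the sum into two pieces, each of which — after including the vanishing $\mu_h=0$ term for free — is recognised as a copy of $A(\bfj+\bfe_i+\bfe_h)$ and $A(\bfj+\bfe_i-\bfe_h)$ respectively (in case (c)), or $A(\bfj)$ and $A(\bfj-2\bfe_1)$ (in case (d), where the asymmetric power comes from the factor $\upsilon^{-\lambda_1}$ combining with $-(\mu_1-1)$ in the exponent).

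There is no real obstacle here; the only care needed is bookkeeping the exponent shifts and making sure the substitution range $\mu_h\geq 1$ can be harmlessly extended to all $\mu\in\mathbb N^n$ because the $\mu_h=0$ contribution is zero in the split form.
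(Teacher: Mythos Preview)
Your proposal is correct and follows essentially the same approach as the paper's proof: the paper also treats (a) as immediate, handles (b) via the shift $\mu=\lambda-\bfe_i$ using the zero-diagonal convention, and in (c)--(d) substitutes $\mu=\lambda+\bfe_h$ (resp.\ $\mu=\lambda+\bfe_1$), expands the quantum integer, and extends the summation to $\mu_h=0$ (resp.\ $\mu_1=0$) by noting that the added term vanishes. The only cosmetic difference is that the paper expands $[\lambda_h+1]$ before shifting while you shift first; the bookkeeping is otherwise identical.
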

\begin{proof}
The assertion (a) is clear. For (b),
\begin{equation}\notag
\sum_{\lambda\in \mathbb N^n}
    \q^{\lambda\cdot \bfj }
     \nomab{A^0+\lambda-E_{i,i}|A^1}
=\sum_{\lambda\in \mathbb N^n;\lambda_i\geq 1}
                \q^{(\lambda-\bfe_i)\cdot \bfj +j_i}
    \nomab{A^0+(\lambda-\bfe_{i})|A^1}
=
    \q^{j_i}(A^0|A^1) (\bfj).
\end{equation}
To prove (c), we first note that, for $\mu=\lambda+\bfe_{\i}$,
\begin{equation}
\begin{aligned}
 \ups^{\lambda\cdot \bfj+\lambda_i } &[\lambda_{\i}+1]= \ups^{\lambda\cdot \bfj  +\lambda_i } \frac{\ups^{\lambda_{\i}+1}
     -\ups^{-\lambda_{\i}-1}}{\ups-\ups^{-1}}= \frac{\ups^{ \lambda\cdot \bfj+\lambda_i +\lambda_{\i}+1}
     -\ups^{\lambda\cdot  \bfj +\lambda_i -\lambda_{\i}-1}}{\ups-\ups^{-1}}\\
&= \frac{\ups^{ \lambda\cdot  (\bfj+\bfe_{i} +\bfe_{\i})+1}
     -\ups^{\lambda\cdot ( \bfj +\bfe_{i}-\bfe_{\i})-1}}{\ups-\ups^{-1}}= \frac{\ups^{ \mu\cdot  (\bfj+\bfe_{i} +\bfe_{\i})-j_{\i}}
     -\ups^{\mu\cdot ( \bfj +\bfe_{i}-\bfe_{\i})-j_{\i}}}{\ups-\ups^{-1}},
\end{aligned}
\end{equation}
which is zero if $\mu_h=0$ as $i\neq h$. Thus, the left hand side of (c) becomes
\begin{equation}\notag
\begin{aligned}
\sum_{\mu\in \mathbb N^n}
                 \frac{\ups^{ \mu\cdot  (\bfj+\bfe_{i} +\bfe_{\i})-j_{\i}}
     -\ups^{\mu\cdot ( \bfj +\bfe_{i}-\bfe_{\i})-j_{\i}}}{\ups-\ups^{-1}}
                    \nomab{A^0+\mu|A^1}
= \q^{-j_{\i}}  \frac{A(\bfj+\bfe_{i} +\bfe_{\i})
     -A( \bfj +\bfe_{i}-\bfe_{\i})}  {\ups-\ups^{-1}}
                   .
\end{aligned}
\end{equation}
Finally, for (d), we have 
\begin{equation*}
\begin{aligned}
\ups^{\lambda\cdot \bfj-\lambda_i } [\lambda_{i}+1] \frac{\ups^{ \lambda\cdot \bfj +1}
     -\ups^{\lambda\cdot ( \bfj -2\bfe_{i} )-1}}{\ups-\ups^{-1}}
= \frac{\ups^{ \mu\cdot  \bfj -j_i+1}
     -\ups^{\mu\cdot ( \bfj -2\bfe_{i} )-j_i+1}}{\ups-\ups^{-1}}\quad(\mu=\lambda+\bfe_{i}),\\
\end{aligned}
\end{equation*}
which is 0 is $\mu_i=0$. Taking $i=1$, the left hand side of (d) becomes
\begin{equation}\notag
\begin{aligned}
\sum_{\mu\in \mathbb N}\frac{\ups^{ \mu\cdot  \bfj -j_1+1}
     -\ups^{\mu\cdot ( \bfj -2\bfe_{1} )-j_1+1}}{\ups-\ups^{-1}}
     \nomab{A^0+\mu|A^1}
=          \q^{-j_1+1}\frac{A( \bfj )
     -A( \bfj -2\bfe_{1} )}{\ups-\ups^{-1}}              .
\end{aligned}\vspace{-2ex}
\end{equation}
\end{proof}


%
%
Let $\beta_h=\bfe_{\i} +\bfe_{\i+1}$ and $\alpha_h=\bfe_{\i}-\bfe_{\i+1}$ for all $1\leq h<n$ and 
recall the notation $\delta_{i,j}^\leq$ used in \eqref{U-[actions]}.

\begin{theorem}\label{mulfor}For  $\bfj=(j_i)\in \mathbb Z^n, A=(A^0|A^1)=((a_{s,t}^0)|(a_{s,t}^1))\in M_n(\N|\Z_2)'$, and $1\leq i,h\leq n, h\neq n$, we have in the $\qUq$-module $\wsTn$ the following action formulas:

\begin{itemize}
\item[(1)]$
\k_i. A(\bfj)
= \q^{\sum_{1\leq j\leq n}(a_{i,j}^0+a_{i,j}^1)} A(\bfj+\bfe_i).
$
\item[]
$$\begin{aligned}
(2)\;&\e_\i. A(\bfj)= 
 \sum_{1\leq j\leq \i-1, a_{h+1,j}^0\geq1}
              \q^{\sigma_\sce^+(\i,j,A)}
    [a_{h, j}^0+1](A^0+E_{\i,j}-E_{\i+1,j}|A^1)(\bfj-\alpha_h)\\
&+\delta^\leq_{1, a^0_{h+1,h}} \q^{\sigma_\sce^+(\i,\i,A)-j_{\i}}  \frac{({A^0-E_{\i+1,\i}|A^1})(\bfj+\beta_h)
     -({A^0-E_{\i+1,\i}|A^1})( \bfj -\alpha_h)}  {\ups-\ups^{-1}}\\
&+
           \q^{\sigma_\sce^+(\i,\i+1,A)+j_{\i+1} }
    [a_{\i,\i+1}^0+1]({A^0+E_{\i,\i+1}|A^1})(\bfj)\\
    &+\sum_{\i+2\leq j\leq n,  a_{h+1,j}^0\geq1}
              \q^{\sigma_\sce^+(\i,j,A)}
    [a_{h,j}^0+1](A^0+E_{\i,j}-E_{\i+1,j}|A^1)(\bfj) \\
\end{aligned}$$
\item[]
$$\begin{aligned}
\quad&+\sum_{1\leq j\leq \i-1, a_{h+1,j}^1=1}
     \q^{\sigma_\sce^-(\i,j, A)}
    [ a_{h,j}^1+1](A^0|A^1+E_{\i,j}-E_{\i+1,j})(\bfj-\alpha_h)\\
&+\delta_{1,a_{h+1,h}^1}  \q^{\sigma_\sce^-(\i,\i, A)}
    [ a_{h,h}^1+1](A^0|A^1+E_{\i,\i}-E_{\i+1,\i})(\bfj+\bfe_{\i+1})\\
&+ \delta_{1,a_{h+1,h+1}^1}  \q^{\sigma_\sce^-(\i,\i+1, A) }
    [ a_{h,h+1}^1+1](A^0|A^1+E_{\i,\i+1}-E_{\i+1,\i+1})(\bfj-\bfe_{\i+1})\\
&+\sum_{\i+2\leq j\leq n, a_{h+1,j}^1=1}   \q^{\sigma_\sce^-(\i,j, A) }
    [ a_{h, j}^1+1](A^0|A^1+E_{\i,j}-E_{\i+1,j})(\bfj).\\
\end{aligned}$$
\item[]
$$\begin{aligned}
(3)\;\f_\i. &A(\bfj)=\sum_{1\leq j\leq \i-1,a^0_{h,j}\geq1}
     \q^{\sigma_\scf^+(\i,j,A)   }
    [a_{\i+1,j}^0+1](A^0-E_{\i,j}+E_{\i+1,j}|A^1)(\bfj)\\
&+   \q^{\sigma_\scf^+(\i\,\i,A)+j_h}
    [a_{\i+1,\i}^0+1]({A^0+E_{\i+1,\i}|A^1})(\bfj)\\
    &+\delta^\leq_{1,a^0_{h,h+1}} \q^{\sigma_\scf^+(\i,\i+1,A)-j_{h+1}} \frac{{  ({A^0-E_{\i,\i+1}|A^1}) (\bfj+\beta_h)}
     -{ ({A^0-E_{\i,\i+1}|A^1}) ( \bfj +\alpha_h)}}{\ups-\ups^{-1}}\\
&+\sum_{\i+2\leq j\leq n,a^0_{h,j}\geq1}
     \q^{\sigma_\scf^+(\i,j,A)     }
    [a_{\i+1,j}^0+1](A^0-E_{\i,j}+E_{\i+1,j}|A^1)(\bfj+\alpha_h)\\
\end{aligned}$$
\item[]\vspace{-2ex}
$$\begin{aligned}
&+\sum_{1\leq j\leq \i-1,a^1_{h,j}=1}
      \q^{\sigma_\scf^-(\i,j,A)  }
    [ a_{\i+1,j}^1+1](A^0|A^1-E_{\i,j}+E_{\i+1,j})(\bfj)\\
  &+ \delta_{1,,a^1_{h,h}}     \q^{\sigma_\scf^-(\i,\i,A)  }
    [ a_{\i+1,\i}^1+1](A^0|A^1-E_{\i,\i}+E_{\i+1,\i})(\bfj+\bfe_{\i})\\
    &+   \delta_{1,,a^1_{h,h+1}}    \q^{\sigma_\scf^-(\i,\i+1,A)   }
    [ a_{\i+1,\i+1}^1+1](A^0|A^1-E_{\i,\i+1}+E_{\i+1,\i+1})(\bfj+\bfe_{\i})\\
    &+ \sum_{\i+2\leq j\leq n,a^1_{h,j}=1}
      \q^{\sigma_\scf^-(\i,j,A)  }
    [ a_{\i+1,j}^1+1](A^0|A^1-E_{\i,j}+E_{\i+1,j})(\bfj+\alpha_h).\\
\end{aligned}$$
\item[]
$$\begin{aligned}
(4)\; \k_{\bar 1}.& A(\bfj)
=\delta_{1,a^1_{1,1}}         
                  \q^{\sigma_\sck^+(\bar1,1,A)-j_1+1}\frac{(A^0|A^1-E_{1,1})( \bfj )
     -(A^0|A^1-E_{1,1})( \bfj -2\bfe_{1} )}{\ups-\ups^{-1}}\\
&+\sum_{2\leq j\leq n,a^1_{1,j}=1}
              (-1)^{\fks_j(A^1)}
                                 \q^{\sigma_\sck^+(\bar1,j,A)}
                [ a_{1,j}^0+1] (A^0+E_{1,j}|A^1-E_{1,j})(\bfj-\bfe_1)\\
 &+  \q^{\sigma_\sck^-(\bar1,1,A)+j_1}
    [a_{1,1}^1+1](A^0|A^1+E_{1,1})( \bfj  )\\
&+ \sum_{2\leq j\leq n,a^0_{1,j}\geq1}
    (-1)^{\fks_j(A^1)}
     \q^{\sigma_\sck^-(\bar1,j,A)}
 [a_{1,j}^1+1] (A^0-E_{1,j}|A^1+E_{1,j})(\bfj-\bfe_1).\\
\end{aligned}$$
\end{itemize}
\end{theorem}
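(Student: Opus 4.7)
The strategy is to substitute the defining expansion $A(\bfj)=\sum_{\lambda\in\mathbb{N}^n}\upsilon^{\lambda\cdot\bfj}X^{[A+\lambda]}$, apply the generator to each summand via Lemma \ref{genmul}, and repackage the resulting double sums into canonical elements $B(\bfj')$. Two lemmas do the work: Lemma \ref{effpro} separates each quantum exponent as $\sigma^{\pm}_{*}(h,j,A+\lambda)=\sigma^{\pm}_{*}(h,j,A)+\varepsilon(\lambda)$ with $\varepsilon(\lambda)$ linear in $\lambda_h,\lambda_{h+1}$ (respectively $\lambda_1$ for $\sfK_{\bar 1}$), and Lemma \ref{basrel} turns sums of the form $\sum_\lambda\upsilon^{\lambda\cdot\bfj'}X^{[B+\lambda]}$ back into the notation $B(\bfj'')$, together with the special identities (c),(d) that cover the cases where the diagonal of $B^0$ picks up an extra $E_{i,i}$. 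The $\sfK_i$-formula is immediate: the factor $\upsilon^{\lambda_i}$ coming from Lemma \ref{genmul}(1) is absorbed into $\upsilon^{\lambda\cdot(\bfj+\bfe_i)}$, yielding (1).

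For $\sfE_h$, Lemma \ref{genmul}(2) produces eight groups of terms indexed by whether the modification is in $A^0$ or $A^1$ and whether $j\in\{<h,\,h,\,h+1,\,>h+1\}$. In the six ``generic'' groups (those in which the diagonal of $A^0+\diag(\lambda)$ is not disturbed by the action), Lemma \ref{effpro}(1)--(2) supplies the $\bfj$-shift and a substitution $\lambda\mapsto\lambda\pm\bfe_h\pm\bfe_{h+1}$ handled by Lemma \ref{basrel}(a),(b) produces the corresponding summand of (2), with $\bfj$ shifted by $\pm\alpha_h$, $\pm\beta_h$, $\pm\bfe_h$, or $\pm\bfe_{h+1}$. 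The two non-routine subcases are $j=h$ and $j=h+1$. For $j=h$, setting $B=(A^0-E_{h+1,h}|A^1)$, the contribution becomes
\[\upsilon^{\sigma_\sce^+(h,h,A)}\sum_{\lambda\in\mathbb{N}^n}\upsilon^{\lambda\cdot\bfj+\lambda_{h+1}}[\lambda_h+1]\,X^{[B^0+E_{h,h}+\diag(\lambda)|B^1]},\]
which Lemma \ref{basrel}(c) (applied with the ``$i$'' there equal to $h+1$ and the ``$h$'' there equal to $h$) evaluates to $\upsilon^{\sigma_\sce^+(h,h,A)-j_h}(\upsilon-\upsilon^{-1})^{-1}\bigl(B(\bfj+\beta_h)-B(\bfj-\alpha_h)\bigr)$, the second summand of (2). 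The $j=h+1$ case is cleared by the change of variables $\mu=\lambda-\bfe_{h+1}$ (valid because $\lambda_{h+1}\geq 1$ is required for that term to be nonzero), yielding the third summand. Formula (3) for $\sfF_h$ is obtained by the symmetric argument applied to Lemma \ref{genmul}(3), using Lemma \ref{effpro}(3)--(4) and Lemma \ref{basrel}(c) with the roles of $h$ and $h+1$ exchanged.

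The $\sfK_{\bar 1}$-case follows the same pattern, now with Lemma \ref{genmul}(4) and Lemma \ref{effpro}(5)--(6), and with $\lambda_1$ playing the role of the only relevant scalar. Three of the four resulting groups are routine reindexings; the unique non-routine group is the $j=1$ term of the second sum in Lemma \ref{genmul}(4), which Lemma \ref{basrel}(d) converts directly into the first summand of (4). The sign factor $(-1)^{\fks_j(A^1)}$ depends only on $A^1$, which is unchanged under $A\mapsto A+\lambda$, and hence carries through intact. The principal difficulty is not conceptual but combinatorial: correctly enumerating the eight (respectively four) subcases, matching each to the right clause of Lemma \ref{effpro} and the right identity in Lemma \ref{basrel}, and accurately tracking the cumulative shifts of $\bfj$ by $\pm\bfe_i$, $\pm\alpha_h$, and $\pm\beta_h$. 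Once matched, each case reduces to a one-line calculation.
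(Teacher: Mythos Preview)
Your proposal is correct and follows essentially the same approach as the paper's proof: expand $A(\bfj)$ via its defining sum, apply Lemma~\ref{genmul} termwise, use Lemma~\ref{effpro} to separate the $\lambda$-dependence of the exponents, and then invoke Lemma~\ref{basrel}(a)--(d) to repackage each group of terms as a $B(\bfj')$, with the special identities (c) and (d) handling exactly the diagonal cases $j=h$ (for $\sfE_h$), $j=h+1$ (for $\sfF_h$), and $j=1$ (for $\sfK_{\bar1}$) that you single out. Your identification of the indices in the application of Lemma~\ref{basrel}(c), the change of variables $\mu=\lambda-\bfe_{h+1}$ for the $j=h+1$ term of $\sfE_h$, and the observation that $(-1)^{\fks_j(A^1)}$ is independent of $\lambda$ all match the paper's treatment.
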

%
%
\begin{proof}

By Lemma \ref{genmul}(1), (1) is seen easily:
\begin{equation}\notag
\begin{aligned}
\k_i. A(\bfj)
&= \sum_{\lambda\in \mathbb N^n} \ups^{\lambda\cdot \bfj}\k_i.\nomab{A+\lambda}\\
&= \sum_{\lambda\in \mathbb N^n} \q^{\lambda\cdot \bfj}
      \q^{\lambda_i+\sum_{1\leq j\leq n}(a_{i,j}^0+a_{i,j}^1)}\nomab{A+\lambda}\\
 &= \q^{\sum_{1\leq j\leq n}(a_{i,j}^0+a_{i,j}^1)} A(\bfj+\bfe_i)
 \quad (\mbox{ by Lemma \ref{basrel} (a)}).
\end{aligned}
\end{equation}
By Lemma \ref{genmul}(2), we have
\begin{equation}\notag
\begin{aligned}
\e_\i. A(\bfj)
=& \sum_{\lambda\in \mathbb N^n} \q^{\lambda\cdot \bfj}\e_\i.  \nomab{A+\lambda}\\
=&\sum_{\lambda\in \mathbb N^n}\, \sum_{1\leq j\leq n,b^0_{h+1,j}>0}
    \q^{\lambda\cdot \bfj }
           \q^{\sigma_\sce^+(\i,j, A+\lambda)  }
  {[a_{\i,j}^0+1]}\nomab{A^0+\lambda+E_{\i,j}-E_{\i+1,j}|A^1}\\
  &+ \sum_{\lambda\in \mathbb N^n}\, \sum_{1\leq j\leq n,a^1_{h+1,j}>0}
    \q^{\lambda\cdot \bfj } \q^{\sigma_\sce^-(\i,j, A+\lambda)  }
    { [ a_{\i,j}^1+1] }\nomab{A^0+\lambda|A^1+E_{\i,j}-E_{\i+1,j}}\\
    =&\Sigma^++\Sigma^-,
\end{aligned}
\end{equation}
where $b^0_{i,j}$ is the $(i,j)^0$-entry of $A+\la$, $\Sigma^+$ denotes the first double summation and $\Sigma^-$ denotes the second. 
By Lemma  \ref{effpro}(1),  
\begin{equation}\notag
\begin{aligned}
\Sigma^+=&\sum_{\lambda\in \mathbb N^n}\, \sum_{\i+2\leq j\leq n,a^0_{h+1,j}>0}
    \q^{\lambda\cdot \bfj }
           \q^{\sigma_\sce^+(\i,j,A) }
    [a_{\i ,j}^0+1]\nomab{A^0+E_{\i,j}-E_{\i+1,j}+\lambda|A^1}\\
&+\sum_{\lambda\in \mathbb N^n}\, \sum_{1\leq j\leq \i-1,a^0_{h+1,j}>0}
    \q^{\lambda\cdot \bfj }
           \q^{\sigma_\sce^+(\i,j,A)-\lambda_{h}+\lambda_{h+1}}
    [a_{\i, j}^0+1]\nomab{A^0+E_{\i,j}-E_{\i+1,j}+\lambda|A^1}\\
&+\sum_{\lambda\in \mathbb N^n,\la_{h+1}>0}
    \q^{\lambda\cdot \bfj }
           \q^{\sigma_\sce^+(\i,\i+1,A) }
    [a_{\i,\i+1}^0+1]\nomab{A^0+E_{\i,\i+1}-E_{\i+1,\i+1}+\lambda|A^1}\\
&+\delta^\leq_{1, a^0_{h+1,h}}\sum_{\lambda\in \mathbb N^n}
   \q^{\lambda\cdot \bfj }
           \q^{\sigma_\sce^+(\i,\i,A)+\lambda_{h+1}}
    [\lambda_h+1] \nomab{A^0+E_{\i,\i}-E_{\i+1,\i}+\lambda|A^1}. \\
\end{aligned}
\end{equation}
The two double summations can be easily swapped. In the third summation, we use
$\up^{\la\cdot\bfj}=\up^{j_{h+1}}\up^{(\la-\bfe_{h+1})\cdot\bfj}$. 
Then applying Lemma \ref{basrel}(c) with $i=h+1$ to the last summation yields
 \begin{equation}\notag
\begin{aligned}   
\Sigma^+&= \sum_{\i+2\leq j\leq n,a^0_{h+1,j}>0}
              \q^{\sigma_\sce^+(\i,j,A)}
    [a_{\i, j}^0+1](A^0+E_{\i,j}-E_{\i+1,j}|A^1)(\bfj) \\
&+ \sum_{1\leq j\leq \i-1,a^0_{h+1,j}>0}
              \q^{\sigma_\sce^+(\i,j,A)}
    [a_{\i ,j}^0+1](A^0+E_{\i,j}-E_{\i+1,j}|A^1)(\bfj-\alpha_h)\\
&+
           \q^{\sigma_\sce^+(\i,\i+1,A)+j_{\i+1} }
    [a_{\i,\i+1}^0+1]({A^0+E_{\i,\i+1}|A^1})(\bfj)\\
&+\delta^\leq_{1, a^0_{h+1,h}}  \q^{\sigma_\sce^+(\i,\i,A)-j_{\i}}  \frac{({A^0-E_{\i+1,\i}|A^1})(\bfj+\beta_h)
     -({A^0-E_{\i+1,\i}|A^1})( \bfj -\alpha_h)}  {\ups-\ups^{-1}},\\
\end{aligned}
\end{equation}
where $\beta_h=\bfe_{\i} +\bfe_{\i+1}$ and $\alpha_h=\bfe_{\i}-\bfe_{\i+1}$. Similarly,
\begin{equation}\notag
\begin{aligned}
\Sigma^-=&\sum_{\lambda\in \mathbb N^n}\, \sum_{1\leq j\leq \i-1,a^1_{h+1,j}>0}
    \q^{\lambda\cdot \bfj } \q^{\sigma_\sce^-(\i,j, A) -\lambda_{\i}+\lambda_{\i+1}}
    [ a_{\i ,j}^1+1]\nomab{A^0+\lambda|A^1+E_{\i,j}-E_{\i+1,j}}\\
&+\sum_{\lambda\in \mathbb N^n}\, \sum_{\i+2\leq j\leq n,a^1_{h+1,j}>0}
    \q^{\lambda\cdot \bfj } \q^{\sigma_\sce^-(\i,j, A)  }
    [ a_{\i, j}^1+1]\nomab{A^0+\lambda|A^1+E_{\i,j}-E_{\i+1,j}}\\
&+\delta_{1,a^1_{h+1,h}}\sum_{\lambda\in \mathbb N^n}
    \q^{\lambda\cdot \bfj } \q^{\sigma_\sce^-(\i,\i, A)+\lambda_{\i+1}}
    [ a_{\i,\i}^1+1]\nomab{A^0+\lambda|A^1+E_{\i,\i}-E_{\i+1,\i}}\\
&+\delta_{1,a^1_{h+1,h+1}}\sum_{\lambda\in \mathbb N^n}
 \q^{\lambda\cdot \bfj } \q^{\sigma_\sce^-(\i,\i+1, A) -\lambda_{\i+1}  }
    [ a_{\i,\i+1}^1+1]\nomab{A^0+\lambda|A^1+E_{\i,\i+1}-E_{\i+1,\i+1}}\\
 \end{aligned}$$
$$\begin{aligned}
\quad
=&\sum_{1\leq j\leq \i-1,a^1_{h+1,j}=1}
     \q^{\sigma_\sce^-(\i,j, A)}
    [ a_{\i, j}^1+1](A^0|A^1+E_{\i,j}-E_{\i+1,j})(\bfj-\bfe_\i+\bfe_{\i+1})\\
&+\sum_{\i+2\leq j\leq n, a^1_{h+1,j}=1}   \q^{\sigma_\sce^-(\i,j, A) }
    [ a_{\i, j}^1+1](A^0|A^1+E_{\i,j}-E_{\i+1,j})(\bfj)\\
&+\delta_{1,a^1_{h+1,h}}  \q^{\sigma_\sce^-(\i,\i, A)}
    [ a_{\i,\i}^1+1](A^0|A^1+E_{\i,\i}-E_{\i+1,\i})(\bfj+\bfe_{\i+1})\\
&+\delta_{1,a^1_{h+1,h+1}}   \q^{\sigma_\sce^-(\i,\i+1, A) }
    [ a_{\i,\i+1}^1+1](A^0|A^1+E_{\i,\i+1}-E_{\i+1,\i+1})(\bfj-\bfe_{\i+1}).
\end{aligned}
\end{equation}
This completes the proof of (2). The proof for (3) is similar.

Finally, by Lemma \ref{genmul}(4), we have
\begin{equation}\notag
\begin{aligned}
& \k_{\bar 1}. A(\bfj)
= \sum_{\lambda\in \mathbb N^n} \q^{\lambda\cdot \bfj} \k_{\bar 1}.\nomab{A+\lambda}=\Sigma^0+\Sigma^1,\\
\end{aligned}
\end{equation}
where
$$\aligned
\Sigma^0=&\sum_{\lambda\in \mathbb N^n}\, \sum_{1\leq j\leq n,a^1_{1,j}=1}
    (-1)^{\fks_j(A^1)}\q^{\lambda\cdot \bfj }
                    \q^{\sigma_\sck^+(\bar1,j,A+\lambda)}
                { [ a_{1,j}^0+1] }\nomab{A^0+E_{1,j}+\lambda|A^1-E_{1,j}},\\
\Sigma^1 =& \sum_{\lambda\in \mathbb Z^n}\, \sum_{1\leq j\leq n,b^0_{1,j}\geq1}
   (-1)^{\fks_j(A^1)} \q^{\lambda\cdot \bfj }
    \q^{\sigma_\sck^-(\bar1,j,A+\lambda)}
    { [a_{1,j}^1+1] }\nomab{A^0-E_{1,j}+\lambda|A^1+E_{1,j}}.
\endaligned$$(Here, $b^0_{1,j}$ is the $(1,j)^0$-entry of $A+\la$.)
Thus, by Lemmas  \ref{effpro}(5) and \ref{basrel}(a),(d),
\begin{equation}\notag
\begin{aligned}
\Sigma^0=&\sum_{\lambda\in \mathbb N^n}\, \sum_{2\leq j\leq n,a^1_{1,j}=1}
   (-1)^{\fks_j(A^1)} \q^{\lambda\cdot \bfj }
                   \q^{\sigma_\sck^+(\bar1,j,A)-\lambda_1}
                [ a_{1,j}^0+1] \nomab{A^0+E_{1,j}+\lambda|A^1-E_{1,j}}\\
&+ \delta_{1,a^1_{1,1}}\sum_{\lambda\in \mathbb N^n}
   (-1)^{\fks_1(A^1)}\q^{\lambda\cdot \bfj }
                   \q^{\sigma_\sck^+(\bar1,1,A)-\lambda_1}
                [\lambda_1+1] \nomab{A^0+E_{1,1}+\lambda|A^1-E_{1,1}}\\
=&\sum_{2\leq j\leq n,a_{1,j}^1=1}
              (-1)^{\fks_j(A^1)}  
                   \q^{\sigma_\sck^+(\bar1,j,A)}
                [ a_{1,j}^0+1] (A^0+E_{1,j}|A^1-E_{1,j})(\bfj-\bfe_1)\\
&+     \delta_{1,a^1_{1,1}} \q^{\sigma_\sck^+(\bar1,1,A)-j_1+1}\frac{(A^0|A^1-E_{1,1})( \bfj )
     -(A^0|A^1-E_{1,1})( \bfj -2\bfe_{1} )}{\ups-\ups^{-1}},
\end{aligned}
\end{equation}
and, similarly by Lemma  \ref{effpro}(6), 
\begin{equation}\notag
\begin{aligned}
\Sigma^1=&\sum_{\lambda\in \mathbb N^n}\, \sum_{2\leq j\leq n,a_{1,j}^0\geq1}
   (-1)^{\fks_j(A^1)} \q^{\lambda\cdot \bfj }
     \q^{\sigma_\sck^-(\bar1,j,A)-\lambda_1}
 [a_{1,j}^1+1] \nomab{A^0-E_{1,j}+\lambda|A^1+E_{1,j}}\\
 &+ \delta^\leq_{1,\la_1} (-1)^{\fks_j(A^1)} \sum_{\lambda\in \mathbb N^n}
    \q^{\lambda\cdot \bfj }
     \q^{\sigma_\sck^-(\bar1,1,A)}
    [a_{1,1}^1+1] \nomab{A^0-E_{1,1}+\lambda|A^1+E_{1,1}}\\
=& \sum_{2\leq j\leq n}
    (-1)^{\fks_j(A^1)}
     \q^{\sigma_\sck^-(\bar1,j,A)}
 [a_{1,j}^1+1] (A^0-E_{1,j}|A^1+E_{1,j})(\bfj-\bfe_1)\\
 &+ \q^{\sigma_\sck^-(\bar1,1,A)+j_1}
    [a_{1,1}^1+1](A^0|A^1+E_{1,1})( \bfj  ),
\end{aligned}
\end{equation}
as desired.
\end{proof}

\begin{remarks}\label{odd part}
(1) The multiplication formulas in Theorem \ref{mulfor} can be easily divided into two halves: the even half (i.e., $\Sigma^+$ or $\Sigma^0$ in the proof) and the odd half (i.e., $\Sigma^-$ or $\Sigma^1$ in the proof). The even half in (2) or (3) is similar to (but not\footnote{Note the comultiplication for $\qUq$ used here.} exactly the same as) the corresponding formulas for quantum $\mathfrak{gl}_n$ in \cite[Lemma 5.3]{BLM} (see also \cite[Theorem 14.8]{ddp}). However, the odd half involves matrices of the form $(A^0|A^1+E_{h,j}-E_{h+1,j})$,
$(A^0|A^1-E_{h,j}+E_{h+1,j})$ or $(A^0-E_{1,j}|A^1+E_{1,j})$, $(A^0|A^1+E_{1,1})$, etc., which are not necessarily in $M_n(\N|\Z_2)$ when $a^1_{h,j}=1$. In other words, we do not know from these formulas if the elements $\sfE_h.A(\bfj),
\sfF_h.A(\bfj)$ and $\sfK_{\bar 1}.A(\bfj)$ are belong to $\sVn$. We resolve the issues in the next section.

(2) For the matrices occurring in the even half $\Sigma^+$ of $\sfE_h.A(\bfj)$, it is obtained by moving 1 from the $(h+1,j)^0$-entry (if $a_{h+1,j}^0>0$) to the entry above for all columns $j\neq h,h+1$, while  deducting 1 from the $(h+1,h)^0$-entry whenever $a_{h+1,h}^0>0$ for column $h$,
and always adding 1 to the $(h,h+1)^0$-entry for column $h+1$. Thus, when $a_{h+1,h}^0>0$, it is effectively moving 1 from $(h+1,h)^0$-entry to the $(h,h+1)^0$-entry. For the matrices occurring in the odd half $\Sigma^-$, it is always obtained by moving 1 from  $(h+1,j)^1$-entry (if $a_{h+1,j}^1>0$) to the entry above for every column $j$.

There are similar descriptions for matrices occurring in $\sfF_h.A(\bfj)$ and $\sfK_{\bar 1}.A(\bfj)$.
\end{remarks}

\section{The $\qUq$-supermodule $\sVn$}
We now prove that the superspace $\sVn$ spanned by all $A(\bfj)$ for $A\in  M_n(\bN|\bZ_2)', \bfj\in \Z^n$ is a $\qUq$-supermodule. This requires to show that the action given in Theorem \ref{mulfor} stabilises $\sVn$. As noted in Remark \ref{odd part}, it suffices to show that the odd parts in these action formulas belong to $\sVn$. The following lemma confirms this.

\begin{lemma}\label{close} If $A=(A^0|A^1)\in  M_n(\bN|\bZ_2)'$ and $\bfj=(j_i)\in \Z^n$, then $(A^0|A^1+E_{i,j})(\bfj)\in\sVn$.
More precisely, for $a_{i,j}^1=1$, we have
\begin{equation}\notag
\begin{aligned}
(A^0|A^1+E_{i,j})(\bfj)=
\begin{cases}
\displaystyle \frac{\q-\q^{-1}}{\q+\q^{-1}}{a_{i,j}^0+2\brack 2} (A^0+2E_{i,j}|A^1-E_{i,j})(\bfj),\;&\text{ if } i\neq j;\\
 f_\bfj \Big(\up^{-1}A'(\bfj+2\bfe_{i})
+\ups A'(\bfj-2\bfe_{i})
 -(\q+\q^{-1})A'(\bfj)\Big),\;&\text{ if }i=j \\
\end{cases}
\end{aligned}
\end{equation}
where $f_\bfj=
\displaystyle\frac{\ups^{-2j_i-1}}{(\q-\q^{-1})(\q+\q^{-1})^2} $ and $A'=({A^0|A^1-E_{i,i}})$.
\end{lemma}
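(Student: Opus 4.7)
The plan is to unpack the definition of $(A^0|A^1+E_{i,j})(\bfj)$ as the series $\sum_{\lambda\in\bN^n}\up^{\lambda\cdot\bfj}\,X^{[A^0+\diag(\lambda)|A^1+E_{i,j}]}$ in $\wsTn$, rewrite each monomial using the defining relation $X_{\bar i,j}^2=\frac{\up-\up^{-1}}{\up+\up^{-1}}X_{i,j}^2$ from \eqref{sTn}, and then reassemble the resulting series as an element of $\sVn$. Because $a^1_{i,j}=1$, the $(i,j)^1$-entry of $A^1+E_{i,j}$ equals $2$, so the $j$-th tensor factor of $X^{[A^0+\diag(\lambda)|A^1+E_{i,j}]}$ carries the non-basis factor $X_{\bar i}^2$; applying the relation turns it into $\frac{\up-\up^{-1}}{\up+\up^{-1}}X_i^2$. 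Since $X_i$ is even, it commutes with every other generator in that tensor slot, so I will pull the $X_i^2$ past the remaining odd factors and combine it with the existing even factor through the divided-power identity $X_i^{(a)}X_i^2=[2]!\,{a+2\brack 2}\,X_i^{(a+2)}$.

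First I would handle the case $i\neq j$. Here the absorbing slot is the $(i,j)^0$-entry of $A^0$, which is independent of $\lambda$, so the conversion factor is constant in $\lambda$. The series therefore reassembles immediately as a scalar multiple of $(A^0+2E_{i,j}|A^1-E_{i,j})(\bfj)$, yielding the stated formula once the two factors $\frac{\up-\up^{-1}}{\up+\up^{-1}}$ and ${a_{i,j}^0+2\brack 2}$ are collected.

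The case $i=j$ requires more care, because the absorbing even slot is the $(i,i)^0$-entry of $A^0+\diag(\lambda)$, which equals $\lambda_i$ (recall that $A\in M_n(\bN|\bZ_2)'$ has zero diagonal in $A^0$). After applying the relation and the divided-power identity I would obtain an expression of the form
\begin{equation*}
(A^0|A^1+E_{i,i})(\bfj)=\frac{\up-\up^{-1}}{\up+\up^{-1}}\sum_{\lambda\in\bN^n}\up^{\lambda\cdot\bfj}\,{\lambda_i+2\brack 2}\,X^{[A'+\diag(\lambda+2\bfe_i)]},
\end{equation*}
where $A'=(A^0|A^1-E_{i,i})$. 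Substituting $\mu=\lambda+2\bfe_i$ and extending the range to all $\mu\in\bN^n$ (which is valid since ${\mu_i\brack 2}=0$ when $\mu_i\in\{0,1\}$) produces an overall factor $\up^{-2j_i}$ together with a series whose coefficient is ${\mu_i\brack 2}$. I would then use the identity
\begin{equation*}
(\up+\up^{-1})(\up-\up^{-1})^2\,{\mu_i\brack 2}=\up^{2\mu_i-1}+\up^{1-2\mu_i}-(\up+\up^{-1})
\end{equation*}
to split the series into three geometric-type pieces, each of which is recognised as one of $\up^{-1}A'(\bfj+2\bfe_i)$, $\up\,A'(\bfj-2\bfe_i)$ or $(\up+\up^{-1})A'(\bfj)$, in the spirit of Lemma \ref{basrel}(c),(d).

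The main obstacle is this second case: one must split the quantum binomial ${\mu_i\brack 2}$ into exponentials in $\mu_i$ and then carefully track the various powers of $\up\pm\up^{-1}$ contributed by (i) the deformation relation in $\SW$, (ii) the divided-power absorption, (iii) the shift $\mu=\lambda+2\bfe_i$, and (iv) the recognition of each piece as a translate $A'(\bfj+c\bfe_i)$, so as to arrive at the compact coefficient $f_\bfj$ claimed in the lemma.
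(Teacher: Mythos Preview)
Your proposal is correct and follows essentially the same route as the paper's proof: both reduce each $X^{[A^0+\diag(\lambda)|A^1+E_{i,j}]}$ via the relation $X_{\bar i}^2=\frac{\up-\up^{-1}}{\up+\up^{-1}}X_i^2$ and the divided-power identity, treat the $i\neq j$ case directly, and for $i=j$ substitute $\mu=\lambda+2\bfe_i$, use the expansion of $[\mu_i][\mu_i-1]$ (equivalently, your identity for $(\up+\up^{-1})(\up-\up^{-1})^2{\mu_i\brack 2}$) to split the series into three pieces recognised as $A'(\bfj\pm 2\bfe_i)$ and $A'(\bfj)$. The paper also explicitly notes that the $\mu_i\le 1$ terms vanish, exactly as you observe.
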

\noindent
\begin{proof} We first observe that, by the relation in $X_{\bar i}^{2} =\frac{\q-\q^{-1}}{\q+\q^{-1}}X_{i}^{2}$ in \eqref{suppol},
$$\nomab{A^0|A^1+E_{i,j}}=\displaystyle \frac{\q-\q^{-1}}{\q+\q^{-1}}{a_{i,j}^0+2\brack 2}\nomab{A^0+2E_{i,j}|A^1-E_{i,j}}.$$

Let $B=(B^0|B^1)=(A^0|A^1+E_{i,j}^1)$. Then $B^0=A^0$, $b_{i,j}^1=2$  and
$$B(\bfj)=\sum_{\lambda\in \mathbb N^n} \q^{\lambda\cdot \bfj}\nomab{B+\lambda}=\sum_{\lambda\in \mathbb N^n}  \frac{\q-\q^{-1}}{\q+\q^{-1}} {a_{i,j}^0+2\brack 2} \q^{\lambda\cdot \bfj}\nomab{B^0+2E_{i,j}+\lambda|B^1-2E_{i,j}}.$$
If $ i\neq j$ then,
\begin{equation}
\begin{aligned}
B(\bfj)= \frac{\q-\q^{-1}}{\q+\q^{-1}}{a_{i,j}^0+2\brack 2} (A^0+2E_{i,j}|A^1-E_{i,j})(\bfj).
\end{aligned}
\end{equation}
To see the  $i=j$ case,
we first observe that 
\begin{equation}\notag
\begin{aligned}
\ups^{\lambda\cdot \bfj } &[\lambda_{i}+1][\lambda_{i}+2]
=\ups^{\lambda\cdot \bfj } \frac{\ups^{2\lambda_i+3}
     +\ups^{-2\lambda_i-3}-\ups-\ups^{-1}}{(\ups-\ups^{-1})^2}.\\
&= \ups^{-2j_i}\frac{\ups^{ (\lambda+2\bfe_{i})\cdot  (\bfj+2\bfe_{i})-1}
+\ups^{ (\lambda+2\bfe_{i})\cdot  (\bfj-2\bfe_{i})+1}-\ups^{ (\lambda+2\bfe_{i})\cdot  \bfj+1}
-\ups^{ (\lambda+2\bfe_{i})\cdot  \bfj-1}}{(\ups-\ups^{-1})^2}\\
&= \ups^{-2j_i}\frac{\ups^{ \mu\cdot  (\bfj+2\bfe_{i})-1}
+\ups^{ \mu\cdot  (\bfj-2\bfe_{i})+1}-(\ups+\up^{-1})
\ups^{ \mu\cdot  \bfj}}{(\ups-\ups^{-1})^2},\quad\text{where }\mu=\la+2\bfe_i.
\end{aligned}
\end{equation}
Then, for $A'=({A^0|A^1-E_{ii}})$,
\begin{equation}\notag
\begin{aligned}
B(\bfj)&=\frac{\q-\q^{-1}}{\q+\q^{-1}} \sum_{\lambda\in \mathbb N^n} \q^{\lambda\cdot \bfj}
{\lambda_i+2\brack 2}\nomab{A^0+2E_{ii}+\lambda|A^1-E_{ii}}\\
&=\frac{\q-\q^{-1}}{(\q+\q^{-1})^2} \sum_{\lambda\in \mathbb N^n} \q^{\lambda\cdot \bfj}
[\lambda_i+2][\lambda_i+1]\nomab{A^0+2E_{ii}+\lambda|A^1-E_{ii}}\\
&=\frac{\ups^{-2j_i}(\q-\q^{-1})}{(\q+\q^{-1})^2} \sum_{\mu\in \mathbb N^n,\mu_i\geq 2}\frac{\ups^{ \mu\cdot  (\bfj+2\bfe_{i})-1}
+\ups^{ \mu\cdot  (\bfj-2\bfe_{i})+1}-(\q+\q^{-1})\ups^{ \mu\cdot  \bfj}
}{(\ups-\ups^{-1})^2}
\nomab{A'+\mu}\\
&=\frac{\ups^{-2j_i}}{(\q+\q^{-1})^2} \sum_{\mu\in \mathbb N^n}\frac{\ups^{ \mu\cdot  (\bfj+2\bfe_{i})-1}
+\ups^{ \mu\cdot  (\bfj-2\bfe_{i})+1}-(\q+\q^{-1})\ups^{ \mu\cdot  \bfj}
}{(\ups-\ups^{-1})}
\nomab{A'+\mu}\\
&-\frac{\ups^{-2j_i}}{(\q+\q^{-1})^2} \sum_{\mu\in \mathbb N^n,\mu_i\leq 1}\frac{\ups^{ \mu\cdot  (\bfj+2\bfe_{i})-1}
+\ups^{ \mu\cdot  (\bfj-2\bfe_{i})+1}-(\q+\q^{-1})\ups^{ \mu\cdot  \bfj}
}{(\ups-\ups^{-1})}
\nomab{A'+\mu}\\
&=\frac{\ups^{-2j_i}}{(\q+\q^{-1})^2{(\ups-\ups^{-1})}} \bigg(\ups^{ -1} A'(\bfj+2\bfe_{i})
+\ups A'(\bfj-2\bfe_{i}) -(\q+\q^{-1})A'(\bfj)
\bigg),
\end{aligned}
\end{equation}
 since, for any $\mu\in \mathbb N^n$ with $\mu_i\leq 1$,
${\ups^{ \mu\cdot  (\bfj+2\bfe_{i})-1}
+\ups^{ \mu\cdot  (\bfj-2\bfe_{i})+1}-(\q+\q^{-1})\ups^{ \mu\cdot  \bfj}
}=0.$ 
%
\end{proof}

\begin{theorem}\label{basis A(j)} The superspace $\sVn$ is a $\qUq$-submodule of $\wsTn$ under the actions given in Theorem \ref{mulfor} and the set 
$$\fkL_\scrV=\{A(\bfj)\mid A\in M_n(\bN|\bZ_2)',\bfj\in\Z^n\}$$ forms a basis. Moreover, it is a $\qUq$-supermodule.
\end{theorem}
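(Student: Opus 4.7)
The plan is to establish three things in sequence: linear independence of $\fkL_\scrV$ as a subset of $\wsTn$, closure of $\sVn:=\text{span}_{\Q(\up)}\fkL_\scrV$ under the $\qUq$-action displayed in Theorem \ref{mulfor}, and compatibility of this action with the $\Z_2$-grading $p(A(\bfj)) = |A^1| \bmod 2$.

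For linear independence, I will exploit the bijection $(A,\la) \mapsto A+\la$ of \eqref{A+la} between $M_n(\bN|\bZ_2)' \times \N^n$ and $M_n(\bN|\bZ_2)$: in the expansion $A(\bfj) = \sum_{\la\in\N^n} \up^{\la\cdot\bfj} X^{[A+\la]}$, the coordinate $X^{[B]}$ of $\wsTn$ appears in $A(\bfj)$ only for the unique $A \in M_n(\bN|\bZ_2)'$ obtained from $B$ by zeroing the diagonal of $B^0$, and its coefficient there is $\up^{\la\cdot\bfj}$ with $\la$ the diagonal of $B^0$. Hence any finite relation $\sum c_{A,\bfj} A(\bfj) = 0$ decouples into independent conditions $\sum_{\bfj} c_{A,\bfj} \up^{\la\cdot\bfj} = 0$, one for each $A$ and all $\la\in\N^n$. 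A standard Vandermonde-type argument (using that $\up$ is not a root of unity) gives the linear independence over $\Q(\up)$ of the characters $\la \mapsto \up^{\la\cdot\bfj}$ of $\N^n$ indexed by distinct $\bfj$, forcing every $c_{A,\bfj}$ to vanish.

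For closure, I will inspect each formula in Theorem \ref{mulfor} termwise. The action of $\sfK_i$ yields a scalar multiple of $A(\bfj+\bfe_i)\in \fkL_\scrV$ directly. For $\sfE_h$, $\sfF_h$, $\sfK_{\bar 1}$, the \emph{even} halves produce matrices in $M_n(\bN|\bZ_2)'$ by inspection: the adjustments $\pm E_{h,j}\mp E_{h+1,j}$ to $A^0$ only touch off-diagonal positions (the special $j=h,h+1$ summands have been arranged in Theorem \ref{mulfor} precisely to preserve the zero-diagonal condition), while $A^1$ remains unchanged. The only potentially troublesome summands are in the \emph{odd} halves, where a move $+E_{h,j}$ applied to an entry $a^1_{h,j}=1$ would create the illegal value $2$; these are exactly the summands handled by Lemma \ref{close}, which rewrites $(A^0|A^1+E_{i,j})(\bfj)$ as a $\Q(\up)$-linear combination of elements $A'(\bfj')$ with $A' \in M_n(\bN|\bZ_2)'$. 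Substituting these rewritings into the formulas for $\sfE_h.A(\bfj)$, $\sfF_h.A(\bfj)$, $\sfK_{\bar 1}.A(\bfj)$ verifies membership in $\sVn$. Since by Remark \ref{gen rel}(2) the remaining generators $\sfK_{\bar i}$ ($i\geq 2$), $\sfE_{\bar h}, \sfF_{\bar h}$ are algebraic expressions in the already-treated generators, the entire $\qUq$ stabilises $\sVn$.

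For the supermodule structure, I will track the parity of $|A^1|$ through each formula. Under $\sfK_i$, $\sfE_h$, $\sfF_h$ the exponent matrix $A^1$ is either unchanged (in the even half) or modified by $E_{h,j}-E_{h+1,j}$ (in the odd half), so $|A^1|\bmod 2$ is preserved; under $\sfK_{\bar 1}$ the matrix $A^1$ changes by $\pm E_{1,j}$, flipping parity. Crucially, whenever Lemma \ref{close} is invoked with $i\neq j$, the rewriting sends $A^1+E_{i,j}$ (with $a^1_{i,j}=1$) to $A^1-E_{i,j}$, a change of $-2$ in $|A^1|$ and hence of $0 \bmod 2$; the diagonal case $i=j$ of Lemma \ref{close} similarly preserves the relevant parity. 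Hence the rewritten basis expansion has the same $\Z_2$-degree as the naive formula, and the action respects the grading. The main technical obstacle is bookkeeping in the closure step: one must verify that the sign conventions in $\sTn$ arising from the relation $X_{\bar i}^2=\tfrac{\up-\up^{-1}}{\up+\up^{-1}}X_i^2$ combine consistently with the scalars produced by Lemma \ref{close} in every odd-half summand, but once the rewriting dictionary is in place, everything else is routine substitution.
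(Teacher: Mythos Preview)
Your proposal is correct and follows essentially the same route as the paper: the decoupling by $A$ via the bijection \eqref{A+la} followed by a Vandermonde argument is exactly how the paper proves linear independence, and your closure and parity checks via Theorem~\ref{mulfor} and Lemma~\ref{close} are precisely what the paper's terse ``seen easily from the action formulas \dots\ and the lemma above'' stands for. You have simply spelled out more of the bookkeeping than the paper does.
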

\begin{proof} 
The first and last assertions are seen easily from the action formulas in Theorem \ref{mulfor}
and the lemma above. It remains to prove that $\mathcal L_\scrV$ is linearly independent. 

Suppose $\sum_{i\in I,\bfj\in J}f_{i,\bfj}A_i(\bfj)=0$ for some $f_{i,\bfj}\in\mathbb Q(\up)$. Here $\{A_i\mid i\in I\}$ is a finite subset of $M_n(\bN|\bZ_2)'$ and $J$ is a finite subset of $\Z^{m+n}$. 
Since, as elements of the direct product \eqref{direct product}, the components between elements $A_i(\bfj)$ ($i\in I$) do not overlap each other, it follows that  
 $\sum_{\bfj\in J}f_{i,\bfj}A_i(\bfj)=0$ for every $i\in I$. For simplicity, we
drop subscripts $i$ and assume assume $\sum_{\bfj\in J}f_{\bfj}A(\bfj)=0$. In other words,
$$0=\sum_{\bfj\in J}f_{\bfj}A(\bfj)=\sum_\la\bigg(\sum_{\bfj\in J}f_\bfj\up^{\la\cdot\bfj}\bigg)X^{[A+\la]}.$$
Hence, $\sum_{\bfj\in J}f_\bfj\up^{\la\cdot\bfj}=0$ for every $\la\in\N^{n}$. 

We claim that there exists $\mu\in\N^{n}$ such that, for $\bfj,\bfj'\in J$, $\mu\cdot\bfj\neq\mu\cdot\bfj'$ whenever $\bfj\neq\bfj'$. Indeed, for distinct $\bfj,\bfj'\in J$, consider the polynomial
   $p_{\bfj,\bfj'}(x)=(j_1-j_1')x^{n}+(j_2-j_2')x^{n-1}+\cdots+(j_n-j_n')x$. Let 
   $$R=\{z\in\mathbb C\mid p_{\bfj,\bfj'}(z)=0 \text{ for some }\bfj,\bfj'\in J \}.$$
Since $J$ is finite, it follows that $R$ is finite. Thus, there exists $a\in\mathbb N$ and $a\not\in R$. So
$p_{\bfj,\bfj'}(a)\neq0$ for all distinct $\bfj,\bfj'\in J$. Hence, putting $\mu=(a^{n},a^{n-1},\ldots,a)$, we obtain $\mu\cdot\bfj\neq\mu\cdot\bfj'$ for all distinct $\bfj,\bfj'\in J$, proving the claim.


Now, for the $\mu$ given in the claim, choose $\la=d\mu$ for $d=0,1,\ldots,|J|-1$. Then
$\sum_{\bfj\in J}f_\bfj(\up^{\mu\cdot\bfj})^d=\sum_{\bfj\in J}f_\bfj\up^{d\mu\cdot\bfj}=0.$
Since the $|J|\times|J|$-matrix $((\up^{\mu\cdot\bfj})^d)$ is a Vandermonde  determinant which is nonzero by the selection of $\mu$, it follows that 
$f_\bfj=0$, for all $\bfj\in J$. In other words, all $f_{i,\bfj}=0$. Hence, $\mathcal L_\scrV$ is linearly independent.
\end{proof}

\begin{remark} By applying Lemma \ref{close},
the odd parts of the action formulas in Theorem \ref{mulfor} can be further refined as a linear combination of the basis $\fkL_\scrV$ if some $a_{h,j}^1$ or $a_{h+1,j}^1$ are positive (i.e., equal to 1). Thus, we obtain a matrix representation of the generators with respect to the basis $\fkL_\scrV$. However, this makes the formulas more complicated. What we will do next is to find a leading term for actions by divided powers of generators under the order relation $\preceq$ defined in \eqref{prec}. 
\end{remark}

\section{Leading terms in the action formulas}

We now have a close look at the action formulas in Theorem \ref{mulfor} and reveal a certain triangular property of the actions. The ``lower terms'' below means a linear combination of
$B(\bfj')$ with $\vec B$ strictly less than $\vec A$ of the leading term $A(\bfj)$. We deal with the even case first. Recall the conventions made in Convention~\ref{conv}. 

\begin{lemma}\label{order for E} Let $A=(A^0|A^1)\in M_n(\bN|\bZ_2)',$ $\bfj\in\Z^n$ and $1\leq h<n$. 
\begin{itemize}
\item[(1)] If  $\vec A$ starts at $a_{h,k}^0$ for some $k<h$, or 
after $a_{h+1,h}^0$, which is called the $k=h$ case below,
then, for any $m\in\N$, there exists $a\in\Z$ such that
$$\sfF_h^{(m)}A({\bfj})=\up^a(\sfF_h^{(m)}*A)({\bfj})+(\text{lower terms or 0 for $k=h$})$$
where, assuming $1\leq m\leq a_{h,k}^0$ for the $k< h$ case, the matrix
$$\sfF_h^{(m)}*A:=
\begin{cases}
(A^0-mE_{h,k}+mE_{h+1,k}|A^1),&\text{if }k< h;\\
(A^0+mE_{h+1,h}|A^1),&\text{if }k=h,\\
\end{cases}$$ 
starts at the $(h+1,k)^0$-entry $a_{h+1,k}^0+m$ for $1\leq k\leq h$ and, 
for every  lower term $B(\bfj')$ in the $k<h$ case,  $\sfF_h^{(m)}*A\succ B$ at the $(h+1,k)^0$-entry.
\item[(2)]
If  $\vec A$ starts at $a_{h+1,k}^0$ for some $k>h+1$ or after $a_{h,h+1}^0$, 
then,  for any $m\in\N$, there exists $a\in\Z$ such that
$$\sfE_h^{(m)}A({\bfj})=\up^a(\sfE_h^{(m)}*A)({\bfj})+(\text{lower terms})$$
where, assuming $1\leq m\leq a_{h+1,k}^0$ for $k>h+1$, the matrix
$$\sfE_h^{(m)}*A:=\begin{cases}(A^0+mE_{h,h+1}|A^1),\text{ if }k=h+1;\\
(A^0+mE_{h,k}-mE_{h+1,k}|A^1),\text{ if }k> h+1,
\end{cases}$$
starts at the $(h,k)^0$-entry $a_{h,k}^0+m$ and, for a lower term $B(\bfj')$,  $\sfE_h^{(m)}*A\succ B$ at the $(h,k)^0$-entry. 
\end{itemize}
\end{lemma}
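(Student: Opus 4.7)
The plan is an induction on $m$, using the action formulas in Theorem~\ref{mulfor}(2) and~(3) and the lexicographic nature of $\preceq$ via $\vec{(\;\;)}$.  Parts~(1) and~(2) run in parallel; I outline part~(2) and indicate how the clean $k=h$ subcase of~(1) is treated.

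\emph{Base case ($m=1$).}  In the subcase $k>h+1$, the hypothesis that $\vec A$ starts at $a_{h+1,k}^0$ forces $a_{i,j}^1=0$ for $j\geq k$, $a_{i,j}^0=0$ for the upper triangular part of columns $>k$, and $a_{1,k}^0=\cdots=a_{h,k}^0=0$.  In the $\sfE_h$-formula of Theorem~\ref{mulfor}(2), contributions with $j>k$ vanish; the $j=k$ contribution in the $h+2\leq j\leq n$ sum produces $\sfE_h\ast A=(A^0+E_{h,k}-E_{h+1,k}|A^1)$ with coefficient $[a_{h,k}^0+1]=1$; and every remaining contribution (from $j\leq h+1$, even or odd) produces a matrix $B$ whose columns $\geq k$ agree with those of $A$, so $a_{h,k}^{0,B}=0<1=a_{h,k}^{0,\sfE_h\ast A}$, giving $B\prec\sfE_h\ast A$ at the $(h,k)^0$-entry.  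The subcase $k=h+1$ is handled analogously: the unconditional $j=h+1$ term supplies the leading $(A^0+E_{h,h+1}|A^1)$, while all other contributions modify only columns $\leq h$ and leave the $(h,h+1)^0$-entry at $0$.

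\emph{Inductive step.}  For $m\geq 2$, use $\sfE_h\cdot\sfE_h^{(m-1)}=[m]\sfE_h^{(m)}$ and the induction hypothesis to write
$$
[m]\,\sfE_h^{(m)}.A(\bfj)=\sfE_h.\bigl(\up^{a'}(\sfE_h^{(m-1)}\ast A)(\bfj)\bigr)+\sfE_h.(\text{inductive lower terms})
$$
for some $a'\in\Z$.  Set $A':=\sfE_h^{(m-1)}\ast A$.  In subcase $k>h+1$, $A'$ satisfies $a_{h+1,k}^{0,\prime}=a_{h+1,k}^0-(m-1)\geq 1$ (using $m\leq a_{h+1,k}^0$) and $a_{h,k}^{0,\prime}=m-1$, while the entries of $\vec{A'}$ strictly before the $(h,k)^0$-position remain zero.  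Applying Theorem~\ref{mulfor}(2) to $A'(\bfj)$, the $j=k$ term yields $A^0+mE_{h,k}-mE_{h+1,k}=\sfE_h^{(m)}\ast A$ with coefficient $[a_{h,k}^{0,\prime}+1]=[m]$; dividing by $[m]$ produces the claimed leading term up to a power of $\up$.  All other contributions leave the $(h,k)^0$-entry at $m-1$ and are therefore $\prec\sfE_h^{(m)}\ast A$ at that position.

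\emph{The main obstacle.}  One must verify that $\sfE_h$ applied to an inductive lower term $B(\bfj)$, with $\vec B<\vec{A'}$, cannot produce a summand $\succeq\sfE_h^{(m)}\ast A$.  Since $\sfE_h^{(m)}\ast A$ and $A'$ differ only at the entries $(h,k)^0$ and $(h+1,k)^0$, one compares $\vec{B''}$ with $\vec{\sfE_h^{(m)}\ast A}$ by locating the first discrepancy between $\vec B$ and $\vec{A'}$.  A case analysis covers: (i) discrepancy strictly before $(h,k)^0$, preserved under single-column modifications by $\sfE_h$; (ii) discrepancy at $(h,k)^0$, where $b_{h,k}^0<m-1$ is incremented by at most one and remains strictly below $m$; (iii) discrepancy at or after $(h+1,k)^0$, resolved by tracking the $k$-column action under both $a_{h+1,k}^{0,B}\geq 1$ and $=0$.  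Part~(1) follows by the symmetric induction.  In the clean $k=h$ subcase of~(1), the hypothesis that $\vec A$ starts after $a_{h+1,h}^0$ forces $A^1=0$, $A^0$ strictly lower triangular with vanishing columns $<h$, and $a_{n,h}^0=\cdots=a_{h+1,h}^0=0$, so the $\sfF_h$-formula reduces to the single unconditional $j=h$ term; iterating this clean action and dividing by $[m]!$ yields $\sfF_h^{(m)}.A(\bfj)=\up^a(\sfF_h^{(m)}\ast A)(\bfj)$ exactly, justifying the ``or 0'' clause.
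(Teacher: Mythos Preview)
Your proof is correct and follows essentially the same strategy as the paper: induction on $m$, using the explicit action formulas of Theorem~\ref{mulfor} and tracking which column is modified to isolate the leading term. The paper's own argument is considerably terser—it writes out $\sfF_h.A(\bfj)$, identifies the leading term, observes that the lower terms leave columns $1^-,\ldots,k^-$ unchanged (hence are $\prec$ at the $(h+1,k)^0$-entry), and then simply says ``Since $a_{h+1,k}^0=0$, inductively, we obtain\ldots''—so your expanded treatment of the inductive step (the ``main obstacle'') is a genuine elaboration of what the paper leaves implicit. One small remark: your case analysis in the main obstacle is more general than necessary. Since the inductive hypothesis asserts that every inductive lower term $B$ satisfies $B\prec A'$ \emph{at the $(h,k)^0$-entry}, the first discrepancy between $\vec B$ and $\vec{A'}$ is always exactly at $(h,k)^0$, so your cases (i) and (iii) are vacuous and only case (ii) is live. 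This does not affect correctness, only economy.
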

Note that the hypothesis on $\vec A$ in (1) means that $A^1=0$ and $A^0$ is lower triangular with either a leading $(h,k)^0$-entry if $k<h$, or zero columns $1^-,2^-,\ldots,h^-$ if $k=h$. In (2), $A$ has the form
\begin{equation*}
A=\left(\left.
\begin{smallmatrix}
0&&\cdots&\cdots&\cdots&\cdots&0&\cdots&\cdots&0\\
&&\cdots&\cdots&\cdots&\cdots&&\cdots&\\
&\cdots&0&&\cdots&a_{h,k-1}^0&0&0&\cdots&0\\
&&&0&\cdots&a_{h+1,k-1}^0&a_{h+1,k}^0&0&\cdots&0\\
&&&&\vdots&\vdots&\vdots&&\vdots&
\end{smallmatrix}\;\;\right|\;\;
\begin{smallmatrix}
\cdots&a_{1,k-1}^1&0&\cdots&0\\
\cdots&\vdots&\vdots&\cdots&\vdots\\
\cdots&\vdots&\vdots&\cdots&\vdots\\
\cdots&a_{n,k-1}^1&0&\cdots&0
\end{smallmatrix}
\right),
\end{equation*} if $h+1< k$, 
or columns $(k+1)^+,\ldots, n^+$ and columns $\overline {k+1},\ldots,\overline{n}$ are all zeros if $k=h+1$,
\begin{proof}
(1) By the assumption on $A$, if $A$ starts at $a_{h,k}^0$ for some $k\leq h-1$, then
 $\text{supp}(h):=\{j\mid a^0_{h,j}\neq0, 1\leq j\leq 2n\}\subseteq[k,h-1]$ and $a_{h+1,k}^0=0$. If $A$ starts after $a_{h+1,h}^0$, then supp$(h)=\emptyset$ and $a_{h+1,h}^0=0$. Thus, by Theorem \ref{mulfor},
$$\aligned \sfF_h.A(\bfj)
=&\delta_{k,h-1}^\leq\sum_{k\leq j\leq \i-1,a^0_{h,j}\geq1}
     \q^{\sigma_\scf^+(\i,j,A)   }
    [a_{\i+1,j}^0+1](A^0-E_{\i,j}+E_{\i+1,j}|A^1)(\bfj)\\
    &+   \q^{\sigma_\scf^+(\i\,\i,A)+j_h}
    [a_{\i+1,\i}^0+1]({A^0+E_{\i+1,\i}|A^1})(\bfj)\endaligned$$
(the summation is 0 if supp$(h)=\emptyset$, i.e. in the $k=h$ case). 
If $\vec A$ starts at $a_{h,k}^0$ for some $k\leq h-1$, it is clear that $(F_h*A)(\bfj)$ is the leading term and, for a lower term $B(\bfj')$, columns $1^-, 2^-,\ldots,k^-$ of $B$ and $A$ are the same and $F_h*A\succ B$ at the $(h+1,k)^0$-entry.  Since $a_{h+1,k}^0=0$, inductively, we obtain
$$\sfF_h^{m}A({\bfj})=\up^a[m]^!(\sfF_h^{(m)}*A)({\bfj})+(\text{lower terms or 0 for $k=h$}),$$
proving (1). 

We now prove (2). Since $a_{h+1,j}^0=0$ for $k<j\leq n$, we have, by Theorem \ref{mulfor}(2),
$$\aligned
\sfE_h.A(\bfj)=\Sigma_{[1,h]}^+&+   \q^{\sigma_\sce^+(\i,\i+1,A)+j_{\i+1} }
    [a_{\i,\i+1}^0+1]({A^0+E_{\i,\i+1}|A^1})(\bfj)\\
    &+\delta^\leq_{h+2,k}\sum_{\i+2\leq j\leq k,  a_{h+1,j}^0\geq1}
              \q^{\sigma_\sce^+(\i,j,A)}
    [a_{h,j}^0+1](A^0+E_{\i,j}-E_{\i+1,j}|A^1)(\bfj)\\
    &+\Sigma^-_{[\bar1,\overline{k-1}]},
\endaligned $$
Here the omitted terms in $\Sigma_{[1,h]}^+$ and $\Sigma_{[1,k-1]}^-$ involve column indices $j$ with $j\in[1,h]$ or $j\in[\bar 1,\overline{k-1}]$ and so are all lower terms in comparing with $\sfE_h*A$ at the $(h,k)^0$-entry.
Hence the leading term is $(\sfE_h*A)(\bfj)$. Since $a_{h,k}^0=0$ for $h+1< k\leq n$,
the rest of the proof is similar to the argument above. 
\end{proof}

We now deal with the odd case.
\begin{lemma}\label{odd case}Let $A=(A^0|A^1)\in M_n(\bN|\bZ_2)',$ $\bfj\in\Z^n$ and $1\leq k\leq n$. Assume the first $n-k$ sections of $\vec A$ are zeros (i.e., all $\bar j$-columns and $j^+$-columns of $\vec A$ for $j=k+1, k+2,\ldots,n$  are zeros ). 
\begin{itemize}
\item[(1)] If  $a_{1,k}^1=0$,  then
$$\sfK_{\bar 1}.A(\bfj)=\up^a(\sfK_{\bar 1}*A)(\bfj')+(\text{lower terms}),$$
for some $\bfj'\in\Z^n,a\in\Z$, where
$$\sfK_{\bar 1}*A:=\begin{cases}(A^0|A^1+E_{1,1}),&\text{if }k=1;\\
(A-E_{1,k}|A^1+E_{1,k}),&\text{if }2\leq k\leq n, a_{1,k}^0\geq 1.
\end{cases}$$
More precisely,  if $B(\bfj')$ is a lower term, then $K_{\bar 1}*A\succ B$ at the $(1,k)^1$-entry.

\item[(2)] If $a_{h,k}^1=1$, $a_{h+1,k}^1=0$,
then 
$$\sfF_h.A({\bfj})=\up^b(\sfF_h*A)(\bfj')+(\text{lower terms})$$ for some $\bfj'\in\Z^n$ and $b\in\Z$, where $\sfF_h*A:=(A^0|A^1-E_{h,k}+E_{h+1,k})$. More precisely for every matrix $B$ in a lower term,
columns $k$ of $B^1$ and $A^1$ are the same  and
$\sfF_h*A\succ B$ at the $(h+1,k)^1$-entry.
\end{itemize}
\end{lemma}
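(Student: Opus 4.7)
The plan is to apply Theorem \ref{mulfor} directly to $\sfK_{\bar1}.A(\bfj)$ and $\sfF_h.A(\bfj)$, and under the stated hypotheses on $\vec A$ to isolate one summand as the leading term while showing every other summand produces a matrix strictly smaller under $\preceq$, with the first discrepancy occurring at the indicated entry.

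For part (1), I would expand $\sfK_{\bar1}.A(\bfj)$ via Theorem \ref{mulfor}(4). The assumption that the first $n-k$ sections of $\vec A$ vanish forces $a_{i,j}^1 = 0$ and $a_{1,j}^0 = 0$ for all $j > k$ and all $i$. When $k=1$ one further has $a_{1,j}^1 = a_{1,j}^0 = 0$ for $j \geq 2$ and $a_{1,1}^1 = 0$, so all four groups of summands collapse except the single term $(A^0|A^1+E_{1,1})(\bfj)$, giving the leading term with no lower terms. When $2 \leq k \leq n$ with $a_{1,k}^0 \geq 1$, I would pick out the $j=k$ summand of the fourth sum, which yields $B^\star := (A^0 - E_{1,k}|A^1 + E_{1,k})$ with coefficient a scalar multiple of $\up^a$ (up to a sign controlled by $\fks_k(A^1)$). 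Every other surviving summand either modifies $(1,1)^1$ alone or lies in column $\bar j$ or $j^+$ with $j < k$; in every case the resulting matrix $B$ satisfies $b_{1,k}^1 = 0$, while $B^\star$ has $b_{1,k}^{\star 1} = 1$. Since the entries of $\vec B$ and $\vec{B^\star}$ preceding the $(1,k)^1$-position agree (zeros in columns $\bar j, j^+$ for $j > k$, and $b_{i,k}^1 = a_{i,k}^1 = b_{i,k}^{\star 1}$ for $i \geq 2$ in column $\bar k$), one concludes $B^\star \succ B$ at the $(1,k)^1$-entry.

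For part (2), I would proceed analogously using Theorem \ref{mulfor}(3). The odd half of the formula contains a summand at $j=k$ producing $B^\star := (A^0 | A^1 - E_{h,k} + E_{h+1,k})$, nonzero because $a_{h,k}^1 = 1$ and $a_{h+1,k}^1 = 0$; this is the candidate leading term. The remaining summands split in two. Every summand from the even half preserves $A^1$ and so yields $B$ with $b_{h+1,k}^1 = 0$. Every summand from the odd half at index $j \neq k$ requires $a_{h,j}^1 = 1$, but the hypothesis gives $a_{h,j}^1 = 0$ for $j > k$, so only $j < k$ contributes; such summands modify only column $\bar j$ with $j < k$ and leave column $\bar k$ of $A^1$ untouched, again giving $b_{h+1,k}^1 = 0$. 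In both classes $\vec B$ and $\vec{B^\star}$ agree on all entries preceding the $(h+1,k)^1$-position (the zeros in the first $n-k$ sections, together with the unchanged entries $b_{n,k}^1, \ldots, b_{h+2,k}^1$ of column $\bar k$), while $b_{h+1,k}^{\star 1} = 1$ and $b_{h+1,k}^1 = 0$. Hence $B^\star \succ B$ at the $(h+1,k)^1$-entry.

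The hard part is the bookkeeping: one must read off exactly which entries of $A$ each of the (up to eight) summands of $\sfF_h.A(\bfj)$ and (four) summands of $\sfK_{\bar1}.A(\bfj)$ actually modifies, and translate these positions into $\vec A$ under the sectional lexicographic preorder of \eqref{prec}. A minor additional subtlety is that some summands produce matrices lying outside $M_n(\N|\Z_2)'$ (for instance with a nonzero diagonal entry in $A^0$, handled by Lemma \ref{basrel}, or with an odd entry equal to $2$, handled by Lemma \ref{close}); rewriting these in the basis $\fkL_\scrV$ only shifts the $\bfj$-label and so does not affect the matrix-indexed ordering $\preceq$.
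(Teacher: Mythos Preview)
Your proposal is correct and follows essentially the same route as the paper: both parts are proved by expanding via Theorem~\ref{mulfor}(3),(4), isolating the $j=k$ summand as the leading term, and checking that every other surviving summand leaves column $\bar k$ of $A^1$ unchanged so that the first discrepancy in $\vec{\;\;}$ occurs at the claimed entry. Your case analysis is in fact somewhat more explicit than the paper's (which simply observes that nonzero entries in row $h$ occur only in columns $\leq k$ and then declares the leading term ``clear''). One small imprecision: when an odd entry becomes $2$ and Lemma~\ref{close} is invoked, the rewriting does change the matrix (not just the $\bfj$-label), but since those changes are confined to columns with index $<k$, your conclusion that $\preceq$ is unaffected at the $(h+1,k)^1$-position still stands.
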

\begin{proof}Assertion (1) follows from Theorem \ref{mulfor}(4). 
Since $a_{1,j}^1=0$ for all $j\geq k$ and $a_{1,j}^0=0$ for all $j>k$, the formula shows that all terms occur in $\Sigma^0$ ($=$ the sum over the first row of $A^1$ with row indices $j\in[1,k-1]$) are lower, since these matrices involved have the same column $\bar k$ as $A$ and $\sfK_{\bar 1}*A \succ A$ at the $(1,k)^1$-entry.
Similarly, every matrix $B\neq \sfK_{\bar 1}*A$ occurring in $\Sigma^1$ ($=$ the sum over the first row of $A^0$ with column indices $j\in[1,k]$) has 
the same $\bar k$-column as that of $\vec A$. Hence, $\sfK_{\bar 1}*A \succ B$ at the $(1,k)^1$-entry. (Note that the $(1,k)^1$-entry is not necessarily the leading entry of $\sfK_{\bar 1}*A $ and there is no lower terms if $k=1$ and $\bfj'=\bfj$ in this case.)

For (2), since $a^1_{h,k}=1$ is the last nonzero entry in row $h$ of $A^1$ and, in row $h$ of $A^0$, nonzero entries occurs before column $k+1$, $(A^0|A^1-E_{h,k}+E_{h+1,k})^{\vec\;}$ is clearly the leading term in $\sfF_h.A({\bfj})$, by Theorem \ref{mulfor}(3).
For the matrices $B$ in a lower term, column $k$ is $B^1$ is the same as that of $A^1$. Hence, $\sfF_h*A\succ B$ at the $(h+1,k)^1$-entry. 
\end{proof}

For any $A=(A^0|A^1)\in M_n(\bN|\bZ_2)'$, $\bfj=(j_i)\in\Z^n$, and $1\leq i, j\leq n$, let  $\fkF_{i,j}^1=\fkF_{i,j}^1(A)$ be the odd monomials  as defined in \eqref{Fij}.

\begin{corollary}\label{FijA}
Let $A=(A^0|A^1),A_\bullet=(A_\bullet^0|A_\bullet^1)\in M_n(\bN|\bZ_2)'$ with $A^i=(a_{k,l}^i)$ and $A_\bullet^i=(\ul{a}_{k,l}^i)$ such  that $\ul{a}_{1,j}^0\geq a^1_{1,j}+\cdots+a_{n,j}^1$ for $j>1$, and assume that $\vec A_\bullet$ starts after $\ul{a}^1_{1,j}$ for $j\geq 1$.
Then, for $\fkF_{i,j}^1=\fkF_{i,j}^1(A)$,
$$\fkF^1_{1,j}\fkF^1_{2,j}\cdots \fkF^1_{n,j} A_\bullet(\bfj)=\up^aB(\bfj')+\text{lower terms},$$
for some $a\in\mathbb Z$, $B=(B^0|B^1)\in M_n(\bN|\bZ_2)'$, where $B^0$ is obtained from $A_\bullet^0$ with $\ul{a}_{1,j}^0$ replaced by $\ul{a}_{1,j}^0-(a^1_{1,j}+\cdots+a_{n,j}^1)$, and $B^1=A_\bullet^1+a_{1,j}^1E_{1,j}+\cdots+a_{n,j}^1E_{n,j}$. Moreover, for every lower term $C(\bfj'')$, $C\prec B$ at the leading entry of $B$.
\end{corollary}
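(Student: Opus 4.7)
The plan is to process the product $\fkF^1_{1,j}\fkF^1_{2,j}\cdots \fkF^1_{n,j}$ from right to left, applying Lemmas \ref{odd case}(1) and (2) iteratively. Since $\fkF^1_{k,j}$ is the identity whenever $a^1_{k,j} = 0$, let $i_1 < i_2 < \cdots < i_r$ enumerate the indices with $a^1_{i_s, j} = 1$; then $\fkF^1_{1,j}\cdots\fkF^1_{n,j} = \fkF^1_{i_1,j}\cdots\fkF^1_{i_r,j}$, and I would process $\fkF^1_{i_r, j}$ first, then $\fkF^1_{i_{r-1}, j}$, and so on. At each intermediate stage I would track both the current leading matrix and the dominance of the remaining terms in $\preceq$.

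To process one factor $\fkF^1_{i_s, j} = \sfF_{i_s - 1}\cdots\sfF_1\sfK_{\bar 1}$ (just $\sfK_{\bar 1}$ if $i_s = 1$), I would first apply $\sfK_{\bar 1}$ and invoke Lemma \ref{odd case}(1) with $k = j$. Its hypotheses remain in force at every such invocation: columns $j+1, \ldots, n$ are never touched by any operator appearing in any $\fkF^1_{i_{s'}, j}$; the entry $a^1_{1, j}$ is zero at the start of each $\fkF^1_{i_s, j}$ processing (the subsequent $\sfF_1, \ldots, \sfF_{i_s - 1}$ relocating the $1$ just placed by $\sfK_{\bar 1}$ down to $(i_s, j)^1$); and $a^0_{1, j} \geq 1$ is ensured by the hypothesis $\ul a^0_{1, j} \geq r$, which supplies enough ``fuel'' for the $r$ successive decrements. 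Then $\sfF_1, \sfF_2, \ldots, \sfF_{i_s - 1}$ would be applied in turn, each invoking Lemma \ref{odd case}(2) to translate the $1$ from $(t, j)^1$ to $(t+1, j)^1$. The net effect of $\fkF^1_{i_s, j}$ is that the leading matrix acquires a $1$ at $(i_s, j)^1$ in $A^1$ and (for $j \geq 2$) loses $1$ from the $(1, j)^0$ entry of $A^0$. After all $r$ factors have been processed, the leading matrix coincides with the $B$ of the statement, with leading entry at $(i_r, j)^1$ in $\vec B$.

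For the dominance claim on the lower terms, the key structural observation is that once $\fkF^1_{i_r, j}$ has been processed (placing a $1$ at $(i_r, j)^1$), no subsequent operator modifies row $i_r$: the only operators appearing afterwards are $\sfK_{\bar 1}$, which by Theorem \ref{mulfor}(4) acts only on row $1$, and $\sfF_t$ with $t \leq i_{r-1} - 1 < i_r - 1$, which by Theorem \ref{mulfor}(3) acts only on rows $t$ and $t+1 \leq i_{r-1} < i_r$. Coupled with the invariance of columns $j+1, \ldots, n$ throughout (these being initially zero in $A_\bullet$ and never modified by any operator in the process), one concludes that every lower term $C$ appearing at the end has $\vec C$ vanishing at every position preceding $(i_r, j)^1$, matching $B$'s zeros there, whence $C \prec B$ with the comparison realized at or after the leading entry of $B$.

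The main obstacle is the bookkeeping of lower terms across the many intermediate sub-steps: at each application of $\sfK_{\bar 1}$ or $\sfF_t$, new lower terms are created via Lemma \ref{odd case}'s lower-term clauses applied to the current leading, while pre-existing lower terms must be reprocessed via the full action formulas in Theorem \ref{mulfor}. Verifying that all of these remain $\prec$ the current leading requires a careful case analysis based on the structure of the $j$-th column of $A^1$ in each pre-existing lower term---specifically, one shows by induction on the sub-steps that such a column contains at most one nonzero entry, lying in a row strictly below $i_r$, and that no subsequent operator can relocate this entry into row $i_r$. This invariant, guaranteed by the row-range of each operator involved, is what makes the dominance assertion go through at the final step.
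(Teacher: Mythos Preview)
Your approach is correct and essentially identical to the paper's: process the factors right to left, invoking Lemma~\ref{odd case}(1) for each $\sfK_{\bar 1}$ and Lemma~\ref{odd case}(2) for each $\sfF_t$, with the dominance of lower terms controlled by the fact that rows $\geq i_r$ and columns $>j$ are untouched after the first factor $\fkF^1_{i_r,j}$ is processed. The paper's argument is terser (it treats $\fkF^1_{n,j}$ explicitly and then says ``inductively''), while you spell out the row-freezing mechanism more carefully; this is a genuine improvement in clarity.

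One small imprecision: your final-paragraph invariant that a lower term's $\bar j$-column ``contains at most one nonzero entry'' is only correct during the processing of the first factor $\fkF^1_{i_r,j}$. Once later factors act, a lower term can acquire several $1$'s in column $\bar j$ (for instance, a lower term born from $\sfF_t$ during $\fkF^1_{i_{r-1},j}$ will carry the $1$ at $(i_r,j)^1$ inherited from the leading, plus another $1$ at some row $\leq i_{r-1}-1$). Fortunately your third-paragraph argument does not rely on this: all that matters is that every such $1$ sits in a row $<i_r$, which follows directly from the row ranges of the operators involved, exactly as you say. So the proof stands; just drop or weaken the ``at most one'' clause.
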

\begin{proof}By the hypothesis, columns $j,j+1,\ldots,n$ (resp., $j+1,\ldots,n$) of $A_\bullet^1$ (resp., the upper triangular part of $A_\bullet^0$) are zeros.
 If ${a_{n,j}^1}=1$, then $\ul{a}_{1,j}^0\geq1$ and, by Lemma \ref{odd case}(1),
$\sfK_{\bar 1}^{a_{n,j}^1}*A_\bullet$ is the matrix obtained by moving $1$ from the $(1,j)^0$-entry to $(1,j)^1$-entry if $j>1$ or by adding 1 to the $(1,1)^1$-entry if $j=1$. Then, by repeatedly applying Lemma \ref{odd case}(2), $\fkF^1_{n,j}*A_\bullet=(\sfF_{n-1}\cdots \sfF_1)*(\sfK_{\bar 1}^{a_{n,j}^1}*A_\bullet)$ is the matrix obtained by moving $1$ from $(1,j)^1$-entry to $(n,j)^1$-entry. So, effectively, $\fkF^1_{n,j}*A_\bullet$ is the matrix obtained by moving 1 from the $(1,j)^0$-entry to the $(n,j)^1$-entry.
Clearly, the lower terms, which have the same column $\bar j(=n+j)$ as $A_\bullet$ under the action of $\sfF_{n-1}\cdots \sfF_1$, remains lower at the $(n,j)^1$-entry.  Likewise, if $a_{n-1,j}^1=1$, then
$\fkF_{n-1,j}*(\fkF^1_{n,j} *A_\bullet)$ is the matrix obtained by moving $1$ from the $(1,j)^0$-entry to the $(n-1,j)^1$-entry and lower terms remain lower at the $(n-1,j)^1$-entry. Our assertion follows now from an induction.
\end{proof}

\begin{remark}\label{order preserving}
(1) For the use of next section, we observe from Lemmas \ref{order for E} and \ref{odd case} and Corollary \ref{FijA} that, if the monomial $\fkm^{A,{\bf 0}}$ acts on $O({\bf0})$, then the action of $\fkm^{A,{\bf 0}}$ is effectively a sequence of actions by operators $\fka_{k,l}^i$, $i\in\mathbb Z_2$ (for $n=3$, see Example~\ref{3by3case} below for a definition):
$$\begin{aligned}
\fkm^{A,{\bf 0}}&= \underline{\fka_{1,n}^1\cdots \fka_{n,n}^1 \fka_{1,n}^0\cdots\fka_{n-1,n}^0}\cdot
     \underline{\fka_{1,n-1}^1\cdots \fka_{n,n-1}^1 \fka_{1,n-1}^0\cdots\fka_{n-2,n-1}^0}\cdot \\
    &\cdots\underline{\fka_{1,2}^1\cdots \fka_{n,2}^1 \fka_{1,2}^0}\cdot\underline{\fka_{n,1}^1\cdots \fka_{1,1}^1} \cdot\underline{\fka_{n,1}^0\cdots \fka_{2,1}^0}\cdot\underline{ \fka_{n,2}^0\cdots \fka_{3,2}^0}\cdots \underline{\fka_{n,n-1}^0},
\end{aligned}$$
where each operator $\fka_{k,l}^i$ hitting on the leading term $B(\bfj)$ of the previous operation produces a new leading term whose associated matrix is obtained by adding a number $b_{k,l}^i\geq a_{k,l}^i$ into the $(k,l)^i$-entry of $B$, which is the leading entry if $i=0$ and equal to $a_{k,l}^i$ if $i=1$,  and a lower term produced has either zero or smaller $(k,l)^i$-entry. (Note that $b_{k,l}^0$ will become $a_{k,l}^0$ after the next operation.)

(2) Since a lower term is lower than the leading term at the entry just added, it follows that the lower terms remains lower after the next operation. More precisely, if $\fka_{k',l'}^{i'}$ is the operator next to $\fka_{k,l}^i$ and
$$\fka_{k,l}^i\fka_{s,t}^j\cdots \fka_{n,n-1}^0.O({\bf0}) =\up^a A_{k,l}^i(\bfj)+(\text{lower terms}),$$
where $A_{k,l}^i$ is the matrix with entries $b_{k,l}^i,a_{s,t}^j,\cdots ,a_{n,n-1}^0$ being in position
 and every lower term is $\prec A_{i,j}^{i}$ at the $(k,l)^i$-entry, then
$$\fka_{k',l'}^{i'}\fka_{k,l}^i\cdots \fka_{n,n-1}^0.O({\bf0}) =\up^a A_{k',l'}^{i'}(\bfj)+(\text{lower terms}),$$
where $A_{k',l'}^{i'}$ is obtained from $A_{k,l}^{i}$ by adding $b_{i',j'}^{i'}$ to the $(k',l')^{i'}$-entry, and every lower term is $\prec A_{k',l'}^{i'}$ at the $(k',l')^{i'}$-entry (always a leading entry if $i'=0$).
\end{remark}

\section{A new realisation of $\qUq$}
We are now ready to prove the main results of the paper. We first prove that the $\qUq$-supermodule $\sVn$ is cyclic and isomorphic to the regular representation of $\qUq$. In this way, we obtain a new realisation for the supergroup $\qUq$ presented by a basis and explicit multiplication formulas of basis elements by generators. The monomial basis $\mathfrak M$ established in Proposition \ref{The monomial basis} plays a crucial role in the proof.

Let $O$ denote the zero matrix $(0|0)\in M_n(\bN|\bZ_2)'$. 
 


\begin{theorem}\label{genmod2}
The $\qUq$-supermodule $\sVn$ is a cyclic module generated by  $ O(\bf 0)$, and the map 
$$\rho:\qUq\longrightarrow\sVn, u\longmapsto uO(\bf 0)$$
is a $\qUq$-supermodule isomorphism. Hence, $\sVn$ is isomorphic to the regular representation of $\qUq$.
\end{theorem}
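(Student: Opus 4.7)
The map $\rho$ is automatically a left $\qUq$-module homomorphism, because the regular representation of $\qUq$ on itself is by left multiplication, while the $\qUq$-action on $\sVn$ was explicitly constructed in Theorem \ref{mulfor} and extended in Theorem \ref{basis A(j)}. It respects parity since the monomial $\fkm^{A,\bfj}$ of \eqref{MonBs} has parity $|A^1|\pmod{2}$ (each factor $\fkF_{i,j}^1$ contains $a^1_{i,j}$ copies of the odd generator $\sfK_{\bar 1}$), matching the parity of $A(\bfj)\in\sVn$ recorded in \eqref{parity2}. What remains is to prove that $\rho$ is bijective.

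The heart of the proof is a triangular relation of the form
\begin{equation*}
 \rho(\fkm^{A,\bfj}) \;=\; \fkm^{A,\bfj}.O(\mathbf 0)\;=\;\up^{a(A,\bfj)}\,A(\bfj_A+\bfj)+\mbox{(lower terms)}
\end{equation*}
for every $A\in M_n(\N|\Z_2)'$ and $\bfj\in\Z^n$, where $a(A,\bfj)\in\Z$, $\bfj_A\in\Z^n$ depends only on $A$, and ``lower terms'' means a $\mathbb Q(\up)$-linear combination of basis vectors $B(\bfj'')\in\fkL_\scrV$ with $B\prec A$ in the order $\preceq$ of \eqref{prec}. To establish this I would decompose $\fkm^{A,\bfj}=\sfK^{\bfj}\cdot\fkm^{A,\mathbf 0}$ and first treat $\fkm^{A,\mathbf 0}.O(\mathbf 0)$ by induction along the factorisation \eqref{MonBs}, applied right-to-left. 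The base vector $O(\mathbf 0)$ satisfies $\vec O=0$, so the rightmost factor $\fkF^{0}_{n-1}=\sfF_{n-1}^{(a^0_{n,n-1})}$ hits it in the $k=h$ case of Lemma \ref{order for E}(1), creating the entry $a^0_{n,n-1}$ with \emph{no} lower term. Each subsequent factor $\fka_{k,l}^i$ (a divided power of an even generator, or an odd block $\fkF^{1}_{i,j}$) then contributes a new entry in position $(k,l)^i$ on top of the current leading matrix, by Lemmas \ref{order for E}, \ref{odd case} and Corollary \ref{FijA}; by Remark \ref{order preserving}(2), any lower term generated at an earlier step remains lower at every entry added afterwards. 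Finally, the prefactor $\sfK^{\bfj}$ shifts the weight parameter $\bfj''\mapsto \bfj''+\bfj$ via Theorem \ref{mulfor}(1), scaling each basis vector by a nonzero power of $\up$ and preserving the triangular structure.

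The triangular relation immediately yields bijectivity. Since $\bfj\mapsto\bfj_A+\bfj$ is a bijection of $\Z^n$ for every fixed $A$, the transition matrix from the family $\{\rho(\fkm^{A,\bfj})\}_{(A,\bfj)}$ to the basis $\fkL_\scrV=\{A(\bfj')\}$ of Theorem \ref{basis A(j)} is upper triangular with respect to the index ordering $(A,\preceq)$ (extended in any way on the $\bfj$-coordinate) and has nonzero diagonal entries $\up^{a(A,\bfj)}$. Hence $\rho$ sends the monomial basis $\mathfrak M$ of $\qUq$ (Proposition \ref{The monomial basis}) to a basis of $\sVn$, and is therefore a $\mathbb Q(\up)$-linear isomorphism. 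Combined with the module-homomorphism and parity checks above, this makes $\rho$ a $\qUq$-supermodule isomorphism and exhibits $O(\mathbf 0)$ as a cyclic generator.

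The main obstacle is the inductive bookkeeping in the second step. Because the order \eqref{vecA} defining $\preceq$ sectionally reverses the order \eqref{PBWorder} used to write $\fkm^{A,\mathbf 0}$, one must verify that the right-to-left action on $O(\mathbf 0)$ creates entries in precisely the order matching the components of $\vec A$, so that at every stage the newly added entry is the (current) leading entry and the starting-entry hypotheses of Lemmas \ref{order for E} and \ref{odd case} and of Corollary \ref{FijA} are satisfied. Once this is organised carefully, the rest of the argument (parity, dimension matching, nonvanishing of the diagonal $\up^{a(A,\bfj)}$) is routine.
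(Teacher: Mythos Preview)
Your proposal is correct and follows essentially the same route as the paper: both establish the triangular relation $\fkm^{A,\bfj}.O(\mathbf 0)=f_{A,\bfj}\,A(\bfj+\bfj_A)+\text{(lower terms)}$ by applying the factors of $\fkm^{A,\mathbf 0}$ right-to-left to $O(\mathbf 0)$ via Lemmas \ref{order for E}, \ref{odd case}, Corollary \ref{FijA}, and Remark \ref{order preserving}, and then conclude bijectivity from the upper-triangular transition matrix. The only minor discrepancy is that the leading coefficient is in $\pm\up^{\Z}$ rather than just $\up^{\Z}$ (signs arise from the $(-1)^{\fks_j(A^1)}$ factors in the $\sfK_{\bar 1}$-action), but this does not affect the argument.
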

\begin{proof}
%
By Proposiiton \ref{The monomial basis}, the image of $\rho$ is spanned by the set 
$$\mathfrak X=\{\fkm^{A,\bfj}.O({\mathbf 0})\mid A\in M_n(\bN|\bZ_2)',\bfj\in\Z^n \}.$$
We need to prove that the set is linearly independent (thus $\rho$ is injective), and  that the span contains all basis elements in $\fkL_{\scrV}$ (thus, $\rho$ is onto).

Recall from \eqref{MonBs} that 
$$\fkm^{A,\mathbf 0}=\bigg(\prod_{j=1}^n(\fkF_{1,n-j+1}^1\fkF_{2,n-j+1}^1\cdots \fkF_{n,n-j+1}^1\fkE_{n-j}^0)\bigg)\cdot\fkF_{1}^0\fkF_2^0\cdots\fkF_{n-1}^0.$$
Repeatedly applying Lemma \ref{order for E}(1) and noting Remark \ref{order preserving}, we see that
$$\fkF_{1}^0(\fkF_2^0\cdots(\fkF_{n-1}^0.O({\mathbf 0})\cdot\cdot))=f_{1|0}A^-(\mathbf 0)+\text{lower terms (LT$_{1|0}$)}$$
where $f_{1|0}\in\pm\up^\Z$ and $A^-=(A^{0,-}|0)$ with $A^{0,-}$ being the lower triangular part of $A^0$. Here every lower term $B(\bfj')$ in (LT$_{1|0}$) satisfies $B\prec A^-$ at the leading entry of $\vec A^-$.
See the example below for a more detailed building of $A^-$.

Now, by applying Corollary \ref{FijA}, we have
$$\fkF_{1,1}^1\fkF_{2,1}^1\cdots \fkF_{n,1}^1.A^-(\bfl)=f_{1|1}A_{1|1}^-(\bfj^{(1)})+\text{lower terms (LT$_{1|1}$)},$$
where $f_{1|1}\in\pm\up^\Z$ and $A^-_{1|1}=(A^{0-}|\bfc^1_1,0\ldots 0)$. (Recall that $\bfc_j^1$ is the $j$th column of $A^1$.) By Corollary \ref{FijA}, every lower term $B(\bfj')$ in (LT$_{1|1}$) satisfies $B\prec A_{1|1}^-$ at an entry in column $\bar 1$ of  $ A_{1|1}^-$.

Applying  Lemma \ref{order for E}(2) yields 
$$\fkE_1^0.A_{1|1}^-(\bfj^{(1)})=E_1^{a_{1,2}^0+|\bfc_2^1|}.A_{1|1}^-(\bfj^{(1)})=f_{2|1}A_{2|1}^-(\bfj^{(1)})+\text{lower terms (LT$_{2|1}$)},$$
where  $f_{2|1}\in\pm\up^\Z$,
$A^-_{2|1}=(A^{0-}+(a_{1,2}^0+|\bfc_2^1|)E_{12}|\bfc^1_1,0\ldots 0)$, and the lower term (LT$_{2|1}$) is 0.

Now, applying the block $\fkF_{1,2}^1\fkF_{2,2}^1\cdots \fkF_{n,2}^1$ to $A_{2|1}^-(\bfj^{(1)})$ produces by Corollary \ref{FijA} a leading term with matrix
$$A^-_{2|2}=(A^{0-}+a_{1,2}^0E_{12}|\bfc^1_1,\bfc^1_2,0\ldots 0),$$
 such that, for some $f_{2|2}\in\pm \up^\Z$ and $\bfj^{(2)}\in\Z^n$,
$$\fkF_{1,2}^1\fkF_{2,2}^1\cdots \fkF_{n,2}^1.A_{2|1}^-(\bfj^{(1)})=f_{2|2}A_{2|2}^-(\bfj^{(2)})+\text{lower terms (LT$_{2|2}$)}.$$
Here every lower term $B(\bfj')$ in (LT$_{2|2}$) satisfies $B\prec A_{2|2}^-$ at an entry in column $\bar2$ of $A_{2|2}^-$. 

Continuing this process in $(n-1)$ pairs of steps, we finally reach to the last pair of actions:
$$\aligned
\fkE_{n-1}^0.A_{n-1|n-1}(\bfj^{(n-1)})&=f_{n|n-1}A_{n|n-1}^-(\bfj^{(n-1)})+\text{lower terms (LT$_{n|n-1}$)}\\
\fkF_{1,n}^1\fkF_{2,n}^1\cdots \fkF_{n,n}^1.A_{n|n-1}^-(\bfj^{(n-1)})&=f_{n|n}A_{n|n}^-(\bfj^{(n)})+\text{lower terms (LT$_{n|n}$)},
\endaligned$$
where $A_{n|n-1}^-=(A^0|\bfc^1_1,\ldots,\bfc_{n-1}^1, 0)$, $A_{n|n}^-=A$, $f_{n,n-1},f_{n|n}\in\pm\up^\Z$, and $\bfj^{(n-1)},\bfj^{(n)}\in\Z^n$.

 Now consider the actions on lower terms occurring in the step (LT$_{i|i-1}$), the new terms produced are less than the leading term  $ A^-_{i|i}$ at an entry in the column $\bar i$ of $A^-_{i|i}$ or at an entry inherited from (LT$_{i|i-1}$). Similarly, the actions on lower terms in step (LT$_{i|i}$), the new terms produced are less than the leading term $A^-_{i+1|i}$ at the leading entry of $A^-_{i+1|i}$ or at an entry inherited from (LT$_{i|i}$). This is true for all $i=1,2,\ldots,n$ (cf. Remark \ref{order preserving}).
 Hence, putting $f_A=\prod_{k=1}^n(f_{k|k-1}f_{k|k}),\;\bfj_A=\bfj^{(n)}$, we obtain
$$
\fkm^{A,{\bf0}}.{O(\bf 0)}= f_AA(\bfj_A)+ \mbox{ lower terms}.
$$
By Theorem \ref{mulfor}(1), for any $\bfj\in\Z^n$, there exists $f_{A,\bfj}\in \pm \up^{\mathbb Z}$ such that
$$
\fkm^{A,\bfj}.O({\bf 0})=f_{A,\bfj}A({\bfj+\bfj_A})+\mbox{ lower terms}.
$$
Now Theorem \ref{basis A(j)} implies that the set $\mathfrak X$ is linearly independent, forcing that $\rho$ is injective.
On the other hand, since
$$(f_{A,\bfj}^{-1}\fkm^{A,\bfj-\bfj_A}).O({\bf 0})=A({\bfj})+\mbox{ lower terms},$$ 
it follows that every $A(\bfj)$ is in the image of $\rho$ and so $\rho$ is onto.
\end{proof}
\begin{example}\label{3by3case}
We use the matrix as given \eqref{3by3} to illustrate the proof as follows. In the notation of Remark \ref{order preserving}, we set here 
\begin{equation}\label{operators}
\aligned 
\fka_{3,1}^0&=\f_2^{(a_{31}^0)},\; \fka_{2,1}^0=\f_1^{(a_{21}^0+a_{31}^0)},\; \fka_{3,2}^0=\f_2^{(a_{32}^0)}\\
\fka_{1,1}^1&=\k_{\bar 1}^{a_{11}^1},\; \fka_{2,1}^1=
\f_{ 1}^{a_{21}^1}\k_{\bar 1}^{a_{21}^1},\; \fka_{3,1}^1=
\f_{2}^{a_{31}^1}\f_{ 1}^{a_{31}^1}\k_{\bar 1}^{a_{31}^1},\\
\fka_{1,2}^0&=\e_1^{(a_{12}^0+a_{12}^1+a_{22}^1+a_{32}^1)},\cdots\cdots.\endaligned
\end{equation}
The six step actions proceed as follows:
\begin{equation}\notag
\f_2^{(a_{31}^0)} \f_1^{(a_{21}^0+a_{31}^0)} \f_2^{(a_{32}^0)}. {O(\bf 0)}=
 \up^aA^-(\bfl)+ \mbox{ lower terms}.
\end{equation}
\begin{equation}\notag
\begin{aligned}
\k_{\bar 1}^{a_{11}^1}
\f_{ 1}^{a_{21}^1}\k_{\bar 1}^{a_{21}^1}
\f_{2}^{a_{31}^1}\f_{ 1}^{a_{31}^1}\k_{\bar 1}^{a_{31}^1}.
A^-(\bf 0)
=& f_{1|1}A^-_{1|1}(\bfj^{(1)})  + \mbox{ lower terms}.
\end{aligned}
\end{equation}
\begin{equation}\notag
\begin{aligned}
&\e_1^{(a_{12}^0+a_{12}^1+a_{22}^1+a_{32}^1)}.
A^-_{1|1}(\bfj^{(1)})
=&f_{2|1}A_{2|1}^-(\bfj^{(1)}).
\end{aligned}
\end{equation}
\begin{equation}\notag
\begin{aligned}
\k_{\bar 1}^{a_{12}^1}
\f_{ 1}^{a_{22}^1}\k_{\bar 1}^{a_{22}^1}
\f_{2}^{a_{32}^1}\f_{ 1}^{a_{32}^1}\k_{\bar 1}^{a_{32}^1}.
A_{2|1}^-(\bfj^{(1)})
=&f_{2|2 }A_{2|2}^-(\bfj^{(2)})  + \mbox{ lower terms}.
\end{aligned}
\end{equation}

\begin{equation}\notag
\begin{aligned}
&\e_1^{(a_{13}^0+a_{13}^1+a_{23}^1+a_{33}^1)}
\e_2^{(a_{13}^0+a_{23}^0+a_{13}^1+a_{23}^1+a_{33}^1)}.
A_{2|2}^-(\bfj^{(2)})
=&f_{3|2 }A_{3|2}^-(\bfj^{(2)}) + \mbox{ lower terms}.
\end{aligned}
\end{equation}

\begin{equation}\notag
\begin{aligned}
&\k_{\bar 1}^{a_{13}^1}
\f_{ 1}^{a_{23}^1}\k_{\bar 1}^{a_{23}^1}
\f_{2}^{a_{33}^1}\f_{ 1}^{a_{33}^1}\k_{\bar 1}^{a_{33}^1}.
A_{3|2}^-(\bfj^{(2)})
=& f_{3|3 }A_{3|3}^-(\bfj^{(3)}) + \mbox{ lower terms}.
\end{aligned}
\end{equation}

\begin{multicols}{2}
Here
\begin{equation}\notag
A^-=\left(\left. \begin{array}{ccc}
         0        &      0   & 0 \\
         a_{21}^0 &      0 & 0 \\
         a_{31}^0 & a_{32}^0 & 0
       \end{array}\right|\right.
\left.\begin{array}{ccc}
         0 & 0 & 0 \\
         0 & 0 & 0 \\
         0 & 0 & 0
       \end{array} \right),
\end{equation}
which is formed first by putting $a_{3,2}^0$ at the $(3,2)^0$-entry, then putting $a_{2,1}^0+a_{3,1}^0$ at the $(2,1)^0$-entry, and finally moving $a_{3,1}^0$ from $(2,1)^0$-entry to $(3,1)^0$-entry.
\end{multicols}
\begin{multicols}{2}
\begin{equation}\notag
A_{1|1}^-=\left(\left. \begin{array}{ccc}
         0        &      0   & 0 \\
         a_{21}^0 &      0 & 0 \\
         a_{31}^0 & a_{32}^0 & 0
       \end{array}\right|\right.
\left.\begin{array}{ccc}
         a_{11}^1 & 0 & 0 \\
         a_{21}^1 & 0 & 0 \\
         a_{31}^1 & 0 & 0
       \end{array} \right),
\end{equation}
which is formed first by moving $a_{3,1}^1$ down to the bottom of the 1st column of $(A^-)^1$, then $a_{2,1}^1$ to the $(2,1)^1$-entry, and finally moving $a_{1,1}^1$ to the $(1,1)^1$-entry.  \end{multicols}
The remaining matrices can be built similarly: 
\begin{equation}\notag
\begin{aligned}
A_{2|1}^-=\left(\left. \begin{array}{ccc}
         0        &      a_{12}^0+a_{12}^1+a_{22}^1+a_{32}^1   & 0 \\
         a_{21}^0 &      0 & 0 \\
         a_{31}^0 & a_{32}^0 & 0
       \end{array}\right|\right.
\left.\begin{array}{ccc}
         a_{11}^1 & 0 & 0 \\
         a_{21}^1 & 0 & 0 \\
         a_{31}^1 & 0 & 0
       \end{array} \right),
\end{aligned}
\end{equation}

\begin{equation}\notag
\begin{aligned}
A_{2|2}^-=\left(\left. \begin{array}{ccc}
         0        &      a_{12}^0   & 0 \\
         a_{21}^0 &      0 & 0 \\
         a_{31}^0 & a_{32}^0 & 0
       \end{array}\right|\right.
\left.\begin{array}{ccc}
         a_{11}^1 & a_{12}^1 & 0 \\
         a_{21}^1 & a_{22}^1 & 0 \\
         a_{31}^1 & a_{32}^1 & 0
       \end{array} \right),
\end{aligned}
\end{equation}

\begin{equation}\notag
\begin{aligned}
A_{3|2}^-=\left(\left. \begin{array}{ccc}
         0        &      a_{12}^0   & a_{13}^0+a_{13}^1+a_{23}^1+a_{33}^1 \\
         a_{21}^0 &      0 & a_{23}^0 \\
         a_{31}^0 & a_{32}^0 & 0
       \end{array}\right|\right.
\left.\begin{array}{ccc}
         a_{11}^1 & a_{12}^1 & 0 \\
         a_{21}^1 & a_{22}^1 & 0 \\
         a_{31}^1 & a_{32}^1 & 0
       \end{array} \right),
\end{aligned}\;\;A^-_{3|3}=A.
\end{equation}
\end{example}

Let $A(\bfj)^*\in \qUq$ be the preimage of the basis element $A(\bfj)\in\sVn$ such that
$$A(\bfj)^*.O(\bfl)=A(\bfj).$$ This gives rise to a third basis for $\qUq$.

\begin{theorem}\label{mthm}
The queer supergroup $\qUq$ contains the basis $$\mathfrak L=\{A(\bfj)^*\mid A\in M_n(\N|\Z_2),\bfj\in\Z^n\}$$ such that
\begin{equation}\notag
\e_i=E_{i,i+1}({\bf 0})^*,\  \f_i=E_{i+1,i}({\bf 0})^*,\
\k_i=O({\bfe_i})^*,\
\k_{\bar 1}=(0|E_{1,1})({\bf 0})^*,
\end{equation}
and the action formulas given in Theorem \ref{mulfor}(1)--(4) become the multiplication formulas of the basis elements $A(\bfj):=A(\bfj)^*$ by generators.
\end{theorem}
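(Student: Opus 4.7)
The plan is to transport the $\qUq$-supermodule structure on $\sVn$ back to $\qUq$ via the isomorphism $\rho$ established in Theorem \ref{genmod2}, thereby defining $\mathfrak L$ as the preimage of $\fkL_\scrV$.

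First, I would invoke Theorem \ref{genmod2}: the map $\rho\colon \qUq\to\sVn$, $u\mapsto u.O(\mathbf 0)$, is a $\qUq$-supermodule isomorphism. Since $\fkL_\scrV=\{A(\bfj)\mid A\in M_n(\N|\Z_2)',\,\bfj\in\Z^n\}$ is a basis of $\sVn$ by Theorem \ref{basis A(j)}, its preimage under $\rho$ is automatically a basis of $\qUq$. Defining $A(\bfj)^*:=\rho^{-1}(A(\bfj))$ then gives the claimed basis $\mathfrak L$.

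Next, I would verify the four explicit identifications by evaluating the action formulas of Theorem \ref{mulfor} at $A=O$ and $\bfj=\mathbf 0$. For $\k_i$, part (1) of that theorem immediately gives $\k_i.O(\mathbf 0)=\up^0\,O(\bfe_i)=O(\bfe_i)$. For $\e_h$, all terms in the summations in part (2) vanish because they require some $a^0_{h+1,j}\geq 1$ or $a^1_{h+1,j}=1$, and the two $\delta^{\leq}$/$\delta$-terms attached to $\sigma_\sce^\pm(h,h,A)$ or $\sigma_\sce^-(h,h+1,A)$ also vanish; the only surviving contribution is $\up^{\sigma_\sce^+(h,h+1,O)+0}[1]\,(E_{h,h+1}|0)(\mathbf 0)=E_{h,h+1}(\mathbf 0)$. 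The identification $\f_h=E_{h+1,h}(\mathbf 0)^*$ follows analogously from part (3), and $\k_{\bar 1}=(0|E_{1,1})(\mathbf 0)^*$ follows from part (4), where the only surviving term is $\up^{\sigma_\sck^-(\bar 1,1,O)+0}[1]\,(0|E_{1,1})(\mathbf 0)=(0|E_{1,1})(\mathbf 0)$.

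Finally, for the multiplication formulas: since $\rho$ is a $\qUq$-module isomorphism, for any $g\in\qUq$ and any basis element $A(\bfj)^*\in\qUq$ one has
\[
\rho\bigl(g\cdot A(\bfj)^*\bigr)=g\,.\,\rho\bigl(A(\bfj)^*\bigr)=g\,.\,A(\bfj).
\]
Applying $\rho^{-1}$ to both sides, any expansion of $g\,.\,A(\bfj)$ as a $\mathbb Q(\up)$-linear combination of elements $B(\bfj')\in\fkL_\scrV$ translates term-by-term into the expansion of $g\cdot A(\bfj)^*$ as a linear combination of basis elements $B(\bfj')^*\in\mathfrak L$. In particular, the expansions produced by Theorem \ref{mulfor}(1)--(4) for $g=\k_i,\e_h,\f_h,\k_{\bar 1}$ become the desired multiplication formulas in $\qUq$ (after dropping the asterisks per the final convention in the statement). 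There is no real obstacle here: the only thing to be careful about is that $\rho$ is a genuine supermodule isomorphism (not just a module map), which is already part of Theorem \ref{genmod2}, so parities are preserved and the transported basis $\mathfrak L$ inherits the $\Z_2$-grading from $\fkL_\scrV$ via $p(A(\bfj)^*)=p(A)$.
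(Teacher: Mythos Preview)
Your proposal is correct and follows essentially the same approach as the paper: verify the four generator identifications by evaluating Theorem~\ref{mulfor} at $A=O$, $\bfj=\mathbf 0$, then transport the basis $\fkL_\scrV$ and the action formulas back through the isomorphism $\rho$ of Theorem~\ref{genmod2}. The only cosmetic difference is that the paper packages your transfer argument (the displayed identity $\rho(g\cdot A(\bfj)^*)=g.A(\bfj)$ and applying $\rho^{-1}$) as a citation to \cite[Lemma~5.1]{DZ}, whereas you spell it out directly.
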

\begin{proof} By the action formulas in Theorem~\ref{mulfor}(1)--(4), it is straightforward to verify the following:
$$\e_i.O(\bfl)=E_{i,i+1}({\bf 0}),\;\f_i.O(\bfl)=E_{i+1,i}({\bf 0}),\;\k_i.O(\bfl)=O({\bf e}_i),\;
\k_{\bar i}.O(\bfl)=(0|E_{1,1})({\bf 0}).$$
Now apply Theorem~\ref{genmod2} together with \cite[Lemma 5.1]{DZ} to give the desired new realisation for $\qUq$.
\end{proof}
\section{The regular representation of the  queer $q$-Schur superalgebra}

Recall from \eqref{sTn} and Lemma \ref{genmul} that the $\qUq$-supermodule decomposition
$$\sTn= \SW^{\otimes n} =\displaystyle\oplus_{r\geq 0}\sTnr,$$
where $\{ \nomab{A} \mid A\in  M_n(\bN|\bZ_2)_r\}$ forms a basis for $\sTnr$. We now prove that each component $\sTnr$ is in fact the regular representation of the queer $q$-Schur algebra
$\mathcal Q_{\q}(n,r)$ introduced in \cite{DW1, DW2}. 

Recall from \eqref{roco} the row/column sum vectors $\ro(A),\co(A)$ associated with a matrix $A\in  M_n(\bN|\bZ_2)_r$ and the weight modules in Corollary \ref{poly1}. 
Let 
$$\La(n,r):=(\N^n)_r=\{(a_1,\ldots,a_n)\in\N^n\mid \sum_{i=1}^n a_i=r\}.$$
 For $\la\in\La(n,r)$, let
 $$\sTnr^\la = \span \{ \nomab{A}\mid A\in  M_n(\bN|\bZ_2), \co(A)=\lambda\}.$$
 Then
 $$
\sTnr
 =\bigoplus_{\lambda\in\Lambda(n,r)}\sTnr^\la.
 $$
 For $A\in M_n(\bN|\bZ_2)$, let $A'$ be the matrix obtained from $A$ by replacing the diagonal of $A^0$ with zeros.
\begin{lemma}\label{qqsdec}
  For every $\lambda\in\Lambda(n,r)$, $\sTnr^\la$ is a $\qUq$-subsupermodule with bases $\{ \nomab{A}\mid A\in  M_n(\bN|\bZ_2), \co(A)=\lambda\}$ and
 $$\{ \fkm^{A',{\bf0}}.\nomab{\diag(\lambda)} \mid  A\in M_n(\bN|\bZ_2),\ \co(A)= \lambda \}.$$
 Moreover, the $\qUq$-supermodules 
 $$\sTnr^\la= \qUq.\nomab{\diag(\la)}\quad \text{and}\quad
 \sTnr=\qUq. {\bf 1_r}
 $$
are both cyclic, where ${\bf 1}_r=\sum_{\lambda\in\Lambda(n,r)} \nomab{\diag(\la)}\in \sTnr.$
 \end{lemma}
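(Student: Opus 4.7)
The plan is to handle the four claims in order: (i) $\sTnr^\la$ is a subsupermodule, (ii) the first basis, (iii) the second basis, and (iv) the two cyclicity statements.

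For (i), I will inspect each action formula in Lemma \ref{genmul} and observe that every generator either scales $X^{[A]}$ (in the case of $\sfK_i$) or shifts a single unit between two entries lying in the same column $j$ of $A$: between rows $h$ and $h+1$ of either $A^0$ or $A^1$ for $\sfE_h$, $\sfF_h$, and between $(1,j)^0$ and $(1,j)^1$ for $\sfK_{\bar 1}$. Thus every generator preserves $\co(A)$, so $\sTnr^\la$ is $\qUq$-stable; compatibility with the $\mathbb Z_2$-grading is automatic because $\sTnr^\la$ is spanned by homogeneous basis elements, giving a subsupermodule. Claim (ii) is then immediate from restricting the basis $\{X^{[A]}\}$ of $\sTnr$ to the column-sum constraint $\co(A)=\la$.

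For (iii), my plan is to establish the triangular relation
$$\fkm^{A',{\bf0}}.X^{[\diag(\la)]} \;=\; \alpha_A\, X^{[A]} \;+\; \sum_{B \prec A,\; \co(B)=\la} c_{A,B}\, X^{[B]}$$
for every $A\in M_n(\bN|\bZ_2)$ with $\co(A)=\la$, where $\alpha_A\in\pm\up^{\mathbb Z}$, $c_{A,B}\in\mathbb Q(\up)$, and $\prec$ is the order \eqref{prec}. To prove this, I will run the same $2n$-step operator-by-operator analysis as in the proof of Theorem \ref{genmod2} and Example \ref{3by3case}, but with $X^{[\diag(\la)]}$ in place of $O(\mathbf 0)$ and the finite-dimensional formulas of Lemma \ref{genmul} in place of Theorem \ref{mulfor}. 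Because the leading-term statements of Lemmas \ref{order for E} and \ref{odd case} and Corollary \ref{FijA} depend only on the shape of the matrix currently being acted on, they transfer to $\sTnr^\la$ without modification, and the block decomposition from Remark \ref{order preserving} and \eqref{operators} produces the claimed leading matrix $A$ whose diagonal entries are pinned down by the column-sum constraint $a_{i,i}^0 = \la_i - \sum_{j\neq i}a_{j,i}^0 - \sum_j a_{j,i}^1\geq 0$ (the nonnegativity being precisely what $\co(A)=\la$ provides). Once this triangularity is in hand, a transition-matrix argument identical to the one used in Proposition \ref{The monomial basis} shows that $\{\fkm^{A',{\bf0}}.X^{[\diag(\la)]} : \co(A)=\la\}$ is a basis of $\sTnr^\la$.

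For (iv), cyclicity $\sTnr^\la = \qUq.X^{[\diag(\la)]}$ is immediate from the second basis. To deduce $\sTnr = \qUq.\mathbf 1_r$, I will use weight separation: by Lemma \ref{genmul}(1), $X^{[\diag(\la)]}$ has weight $\la$, and the weights $\la\in\Lambda(n,r)$ are pairwise distinct. A Vandermonde argument applied to suitable polynomials in $\sfK_1,\ldots,\sfK_n$ acting on $\mathbf 1_r = \sum_\la X^{[\diag(\la)]}$ then extracts each $X^{[\diag(\la)]}$ as an element of $\qUq.\mathbf 1_r$, whence $\sTnr = \sum_\la \qUq.X^{[\diag(\la)]} \subseteq \qUq.\mathbf 1_r$.

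The main obstacle is the combinatorial verification in (iii): one must track the sequence of leading matrices produced by the blocks of $\fkm^{A',{\bf0}}$ applied to $X^{[\diag(\la)]}$ and confirm that (a) the final leading matrix is $A$ itself, (b) no intermediate operator ever tries to decrement a zero entry, and (c) the inherited ``lower'' terms remain lower after each subsequent operator is applied. The ordering of blocks in the monomial $\fkm^{A',{\bf0}}$ was designed precisely to make (a)--(c) work; effectively, this check has already been carried out for $\sVn$ in the proof of Theorem \ref{genmod2}, and my main task is to observe that the finite-dimensional analysis in $\sTnr^\la$ is a direct specialisation of it, with the $\up$-weighted sums defining $A(\bfj)$ replaced by single monomial basis elements.
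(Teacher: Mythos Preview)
Your proposal is correct and follows essentially the same route as the paper: invariance of $\co(A)$ from Lemma~\ref{genmul}, the triangular relation \eqref{SchurTri} established by rerunning the block-by-block analysis of Theorem~\ref{genmod2} with $\diag(\la)$ in place of $O$, and the resulting cyclicity. The one small difference is in the last step: where you invoke a Vandermonde/weight-separation argument in $\sfK_1,\ldots,\sfK_n$ to extract each $X^{[\diag(\la)]}$ from $\mathbf 1_r$, the paper uses the single explicit element $\prod_i\left[{\sfK_i\atop\la_i}\right]$, which acts on $X^{[\diag(\mu)]}$ by $\prod_i\left[{\mu_i\atop\la_i}\right]=\delta_{\mu,\la}$ for $\mu,\la\in\Lambda(n,r)$; this is cleaner but your approach works equally well.
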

 \begin{proof}By a close look at the action formulas in Lemma \ref{genmul}, the actions on $X^{[A]}$ by the generators does not change $\co(A)$. Hence, $\sTnr^\la$ is a $\qUq$-subsupermodule with the defining basis $\{ \nomab{A}\mid A\in  M_n(\bN|\bZ_2), \co(A)=\lambda\}$. In particular, $\qUq.\nomab{\diag(\la)}\subseteq\sTnr^\la$. We now prove that the converse inclusion holds.
 
 We first introduce an order relation $\preceq_\co$ on $M_n(\bN|\bZ_2)_r$ by setting
 $$
 A\preceq_\co B\iff A\preceq B \mbox{ and } \co(A)=\co(B).
 $$

We claim that, for $\la=\co(A)$,
\begin{equation}\label{SchurTri}
\fkm^{A',{\bf0}}.\nomab{\diag(\la)}= g_A \nomab{A}+ (\mbox{lower terms}_{\preceq_\co}),\quad\text{for some }g_A\in \pm\up^\Z,
\end{equation}
which gives the second basis assertion. 

To see this, we apply an argument similar to the proof of Theorem \ref{genmod2}, but note the following differences: the initial matrix in the proof there is $O=(0|0)$ and the leading term in $\fkm^{A',{\bf0}}.O({\bf 0})$ is $A'(\bfj_{A'})$. One recovers $A'$ from $O$. Here the initial matrix is $\diag(\la):=(\diag(\la)|0)$ and the leading term in $\fkm^{A',{\bf0}}.\nomab{\diag(\la)}$ ($\la=\co(A)$) is $\nomab{A}$. So one recovers $A$ from $\diag(\la)$. Note also that the sequence of actions (see Remark \ref{order preserving}) in computing $\fkm^{A',{\bf0}}.\nomab{\diag(\la)}$ move every $b_{k.l}^i\geq a_{k,l}^i$, $k\neq l$ (equality if $i=1$), from the diagonal of $\diag(\la)$ to the $(k,l)^i$-entry.

First we compute
$\fkF_{1}^0\fkF_2^0\cdots\fkF_{n-1}^0.X^{[\diag(\la)]}$.
By Lemma \ref{genmul}(3), 
$$\fkF_{n-1}^0.X^{[\diag(\la)]}=\sfF_{n-1}^{(a_{n,n-1}^0)}.X^{[\diag(\la)]}=
\begin{cases}\up^aX^{[a_{n,n-1}E_{n,n-1}+\la-a_{n,n-1}^0\bfe_{n-1}]},&\text{if }a_{n,n-1}^0\neq0;\\
X^{[\diag(\la)]},&\text{otherwise.}\end{cases}$$
(This action moves $a_{n,n-1}^0$, if $\neq0$, one step down from the diagonal position.) 
Now the two factors in $\fkF_{n-2}^0$ will move $a^0_{n-1,n-2}+a^0_{n,n-2}$ a step down from the diagonal and then move $a^0_{n,n-2}$ down by one step.
Inductively, we see that
$$\fkF_{1}^0\fkF_2^0\cdots\fkF_{n-1}^0.X^{[\diag(\la)]}=g_{1|0}X^{[A^-]}+(\text{lower terms}_{\preceq_\co}),$$
where $A^-$ is the matrix with all lower triangular entries of $A^0$ being moved in position from the diagonal. 

Now, with a similar notation used in the proof of Theorem \ref{genmod2}, we have
$$\fkF_{1,1}^1\fkF_{2,1}^1\cdots \fkF_{n,1}^1.X^{[A^-]}=g_{1|1}X^{[A_{1|1}^-]}+(\text{lower terms}_{\preceq_\co}),$$
where $A_{1|1}^-$ has all entries in column 1 of $A^1$ moved from the diagonal of $A^-_{1|0}$ in position. 

Similarly, we have
    $$\aligned
    \mathfrak E_1^0.X^{[A_{1|1}^-]}&=g_{2|1}X^{[A_{2|1}^-]}+(\text{lower terms}_{\preceq_\co}).\\
    \fkF_{1,2}^1\fkF_{2,2}^1\cdots \fkF_{n,2}^1.X^{[A^-_{2|1}]}&=g_{2|2}X^{[A_{2|2}^-]}+(\text{lower terms}_{\preceq_\co}),\endaligned$$
where $g_{2,1},g_{2|2}\in\pm\up^\Z$, $A_{2|1}^-$ is obtained from $A_{1|1}^-$ by moving $a_{1,2}^0$ one step upwards from the diagonal, and $A_{2|2}^-$ is obtained from $A_{2|1}^-$ by moving $a_{1,2}^1,a_{2,2}^1,\ldots, a_{n,2}^1$ in position from the diagonal.

Continuing this in $(n-1)$ pairs of steps, we finally reach to the last pair of actions:
$$\aligned
\fkE_{n-1}^0.X^{[A_{n-1|n-1}]}&=g_{n|n-1}X^{[A_{n|n-1}^-]}+(\text{lower terms}_{\preceq_\co}),\\
\fkF_{1,n}^1\fkF_{2,n}^1\cdots \fkF_{n,n}^1.X^{[A_{n|n-1}^-]}&=f_{n|n}X^{[A_{n|n}^-]}+(\text{lower terms}_{\preceq_\co}),
\endaligned$$
where $A_{n|n-1}^-=(A^0+|\bfc_n|E_{n,n}|\bfc^1_1,\ldots,\bfc_{n-1}^1, 0)$, $A_{n|n}^-=A$, $g_{n,n-1},g_{n|n}\in\pm\up^\Z$.
Now, \eqref{SchurTri} follows from a similar order preserving property as described in Remark \ref{order preserving}(2). 

Thus, by the claim,
$\qUq.\nomab{\diag(\la)}\supseteq\sTnr^\la.$
Hence,
$$ \qUq.\nomab{\diag(\la)}=\sTnr^\la.$$
Finally, for $\lambda\in\Lambda(n,r)$, since
 $$
 \prod^n_{i=1}\begin{bmatrix}\sfK_i\\ \lambda_i\end{bmatrix} . {\bf 1_r}
 =\sum_{\mu\in\Lambda(n,r)}\prod^n_{i=1}\begin{bmatrix}\sfK_i\\ \lambda_i\end{bmatrix} . \nomab{\diag(\mu)}
 =\sum_{\mu\in\Lambda(n,r)}\prod^n_{i=1}\begin{bmatrix}\mu_i\\ \lambda_i\end{bmatrix} . \nomab{\diag(\mu)}
 =\nomab{\diag(\la)},
 $$
it follows that every $\nomab{\diag(\la)}\in \qUq. {\bf 1_r}$ and so
 $$
\qUq. {\bf 1_r}
 =\bigoplus_{\lambda\in\Lambda(n,r)}\qUq.\nomab{\diag(\la)}=\bigoplus_{\lambda\in\Lambda(n,r)} \sTnr^\la=\sTnr.\vspace{-4ex}
 $$
 \end{proof}
 
 The last assertion of the following result follows from a general construction of the category
 $\mathcal O^{\geq0}_{\text{int}}$ from \cite{GJKK} (see also \cite[Def. 1.5, Rem. 1.6, Prop.1.7(3)]{GJKKK2}, since it can be seen easily that $\mathscr A_\up(n,k)$ belongs to $\mathcal O^{\geq0}_{\text{int}}$ and $\sTnr$, as a direct summand of $\oplus_{k=0}^r\mathscr A_\up(n,k)^{\otimes n}$,
 belongs to $\mathcal O^{\geq0}_{\text{int}}$. For completeness,
 we include a proof. 
 \begin{lemma}\label{wan}
 \begin{enumerate}
 \item $\sfE_i^{(m)}\sfK_{\bar i}=\sfE_i\sfK_{\bar i}\sfE_i^{(m-1)}-[m-1]\sfK_{\bar i}\sfE_i^{(m)}$.
\item 
 $\sfK_{\ol{i+1}}=\sfE_i\sfK_{\bar i}\sfF_i-\frac{\up^{-1}\widetilde \sfK_i-\up \widetilde \sfK_i^{-1}}{\up-\up^{-1}}\sfK_{\bar i}-\up \sfF_i\sfE_i\sfK_{\bar i}+\sfF_{\bar i}\sfE_i\sfK_i.$
 \item For any $A\in M_n(\N|\Z_2)$ with $\la=\ro(A)$, $\sfK_{\bar i}.X^{[A]}=0$ if $\la_{i}=0$.
 \end{enumerate}
 \end{lemma}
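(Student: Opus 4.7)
The plan is to prove (1) and (2) as purely algebraic identities inside $\qUq$, using only the defining relations (QQ1)--(QQ6), and then deduce (3) by induction on $i$ combined with Lemma~\ref{poly2} and an explicit computation on the cyclic generator.

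For (1), I would proceed by induction on $m$. The $m=1$ case is tautological. The main auxiliary identity is the $m=2$ case
\[\sfE_i^{2}\sfK_{\bar i}=[2]\sfE_i\sfK_{\bar i}\sfE_i-\sfK_{\bar i}\sfE_i^{2},\]
which I would derive directly from (QQ3) in the rewritten form $\sfE_i\sfK_{\bar i}=\q^{-1}\sfK_{\bar i}\sfE_i-\q^{-1}\sfE_{\bar i}\sfK_i^{-1}$, together with (QQ2) $\sfK_i^{-1}\sfE_i=\q^{-1}\sfE_i\sfK_i^{-1}$ and (QQ5) $\sfE_i\sfE_{\bar i}=\sfE_{\bar i}\sfE_i$: expanding $\sfE_i(\sfE_i\sfK_{\bar i})$ and $\sfK_{\bar i}\sfE_i^{2}=(\q\sfE_i\sfK_{\bar i}+\sfE_{\bar i}\sfK_i^{-1})\sfE_i$ and comparing gives the identity. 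Given this, the inductive step $m\mapsto m+1$ is routine: write $\sfE_i^{(m+1)}=[m+1]^{-1}\sfE_i\sfE_i^{(m)}$, apply the inductive hypothesis, use the $m=2$ Serre-type identity to simplify $\sfE_i^{2}\sfK_{\bar i}\sfE_i^{(m-1)}$, and collect via the quantum identity $[2][m]-[m-1]=[m+1]$.

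For (2), I would combine (QQ4) and (QQ3) algebraically. Start with (QQ4) at $i=j$:
\[\sfE_{\bar i}\sfF_i-\sfF_i\sfE_{\bar i}=\sfK_{i+1}\sfK_{\bar i}-\sfK_{\ol{i+1}}\sfK_i,\]
solve for $\sfK_{\ol{i+1}}$, then substitute $\sfE_{\bar i}=(\sfK_{\bar i}\sfE_i-\q\sfE_i\sfK_{\bar i})\sfK_i$ from (QQ3). Expand using $\sfK_i\sfF_i\sfK_i^{-1}=\q^{-1}\sfF_i$, $\sfE_i\sfF_i=\sfF_i\sfE_i+\frac{\tilde\sfK_i-\tilde\sfK_i^{-1}}{\q-\q^{-1}}$, the Cartan commutation $[\sfK_{\bar i},\tilde\sfK_i]=0$, and $\sfF_i\sfK_{\bar i}=\q^{-1}(\sfK_{\bar i}\sfF_i+\sfF_{\bar i}\sfK_i)$ from (QQ3) to absorb the $\sfF_i\sfK_{\bar i}\sfE_i$ term into $\sfF_{\bar i}\sfE_i\sfK_i$. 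After gathering, the coefficient of $\sfK_{\bar i}$ consolidates from $\sfK_{\bar i}\tilde\sfK_i^{-1}-\q^{-1}\frac{\tilde\sfK_i-\tilde\sfK_i^{-1}}{\q-\q^{-1}}\sfK_{\bar i}$ to $-\frac{\up^{-1}\tilde\sfK_i-\up\tilde\sfK_i^{-1}}{\up-\up^{-1}}\sfK_{\bar i}$ as claimed, leaving the four terms of (2).

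For (3), I would induct on $i$. The base case $i=1$ is immediate from Lemma~\ref{genmul}(4): if $\la_1=0$ then every $a_{1,j}^{0}=a_{1,j}^{1}=0$, so both summations in the action formula are empty. For the inductive step with $\la_{i+1}=0$, apply identity (2). By Lemma~\ref{poly2} and weight preservation of $\sfK_{\bar i}$ and $\sfK_i$, the two terms $-\up\sfF_i\sfE_i\sfK_{\bar i}.X^{[A]}$ and $\sfF_{\bar i}\sfE_i\sfK_i.X^{[A]}$ vanish (both have $\sfE_i$ acting last on a weight-$\la$ vector). The remaining identity $\sfK_{\ol{i+1}}.X^{[A]}=\sfE_i\sfK_{\bar i}\sfF_i.X^{[A]}-[\la_i-1]\sfK_{\bar i}.X^{[A]}$ is, as is easily checked, tautological at the algebra level. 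To break the tautology I would reduce to the cyclic generator $v_\la=X^{[\diag(\la)]}$ of $\sTnr^\la$ (Lemma~\ref{qqsdec}) and compute $\sfK_{\ol{i+1}}.v_\la=0$ directly, using the alternative expression $\sfK_{\ol{i+1}}=(\sfK_{i+1}^{-1}\sfK_{\bar i}-\sfE_i\sfF_{\bar i}+\sfF_{\bar i}\sfE_i)\sfK_i$ from (QQ4): because $\sfE_i.v_\la=0$, only $\sfK_{i+1}^{-1}\sfK_{\bar i}.v_\la-\sfE_i\sfF_{\bar i}.v_\la$ survives, and a finite computation with the explicit single-factor action formulas of Theorem~\ref{supalg} (after expanding $\sfF_{\bar i}$ via Remark~\ref{gen rel}(2)) shows these two terms cancel. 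Propagation to arbitrary $X^{[A]}\in \sTnr^\la$ then uses (1) together with the inductive hypothesis for $\sfK_{\bar i}$ to commute $\sfK_{\ol{i+1}}$ past the monomial $\fkm^{A',\mathbf 0}$ producing $X^{[A]}$ from $v_\la$ in \eqref{SchurTri}, where each $\sfE_i^{(m)}$-factor triggers (1) to absorb $\sfK_{\ol{i+1}}$ into terms ending with $\sfE_i^{(k)}.v_\la$ (which vanish) or $\sfK_{\bar i}.v_\mu$ with $\mu_{i+1}=0$ (which vanish by the inductive hypothesis once repacked).

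The main obstacle is part (3): the defining relations (QQ3)--(QQ4) alone only produce tautologies on the weight space $M_\la$ with $\la_{i+1}=0$, reflecting that the action of the odd Cartan $\sfK_{\ol{i+1}}$ is not pinned down by the even Cartan constraints together with (QQ4). Thus one is forced into representation-theoretic computation; tracking parities, $\up$-powers, and the combinatorics of matrix shifts through the recursive definition of $\sfF_{\bar i}$ and $\sfK_{\ol{i+1}}$ is the most delicate step.
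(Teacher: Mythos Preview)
Your treatments of (1) and (2) are fine; the paper simply cites \cite[Lem.~1.4]{GJKKK2} for these. The issue is (3).

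You correctly reduce, for $\la_{i+1}=0$ and $m=\la_i$, to
\[
\sfK_{\ol{i+1}}.X^{[A]}=\sfE_i\sfK_{\bar i}\sfF_i.X^{[A]}-[m-1]\sfK_{\bar i}.X^{[A]},
\]
but then discard this as ``tautological'' and switch to an ad hoc argument. In fact this identity is exactly where (1) is meant to be used, and is the whole reason (1) is recorded. By (1), $\sfE_i^{(m)}\sfK_{\bar i}=\sfE_i\sfK_{\bar i}\sfE_i^{(m-1)}-[m-1]\sfK_{\bar i}\sfE_i^{(m)}$; combined with the standard commutation formula for $\sfE_i^{(a)}\sfF_i^{(b)}$ and with $\sfE_i.X^{[A]}=0$, one gets $\sfE_i^{(m)}\sfF_i^{(m)}.X^{[A]}=X^{[A]}$ and $\sfE_i^{(m-1)}\sfF_i^{(m)}.X^{[A]}=\sfF_i.X^{[A]}$. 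Hence the right-hand side above equals $\sfE_i^{(m)}\sfK_{\bar i}\sfF_i^{(m)}.X^{[A]}$. Now $\sfF_i^{(m)}.X^{[A]}$ has weight $\la-m\alpha_i$, whose $i$th component is $\la_i-m=0$, so $\sfK_{\bar i}$ annihilates it by the inductive hypothesis, and the result follows. This is the paper's argument; you had all the ingredients but did not put (1) to work.

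Your alternative route has a genuine gap. The cyclic decomposition $\sTnr^\la=\qUq.X^{[\diag(\la)]}$ of Lemma~\ref{qqsdec} is indexed by $\la=\co(A)$, whereas the hypothesis in (3) concerns $\ro(A)$ (the weight). For a general $A$ with $\ro(A)_{i+1}=0$, the generator $X^{[\diag(\co(A))]}$ need not have $(i{+}1)$st weight component zero, so your base computation $\sfK_{\ol{i+1}}.v_\la=0$ is not available. Moreover, the ``propagation'' step appeals to (1) to commute $\sfK_{\ol{i+1}}$ past factors of $\fkm^{A',\mathbf 0}$, but (1) relates $\sfE_i^{(m)}$ and $\sfK_{\bar i}$, not $\sfK_{\ol{i+1}}$; and $\fkm^{A',\mathbf 0}$ involves many $\sfE_j$, $\sfF_j$, $\sfK_{\bar 1}$ with $j\neq i$, for which no commutation control is offered.
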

 
 \begin{proof}Assertions (1) and (2) follow from the proof of \cite[Lem. 1.4]{GJKKK2}.
 
  
 (3) We apply induction on $i$. The case for $i=1$ follows from Lemma \ref{genmul}. Assume it is true for $i\geq 1$. We prove that 
 $\sfK_{\ol {i+1}}.X^{[A]}=0$ whenever $\la_{i+1}=0$.
 
 Let $m=\la_i$. if $m=0$, the assertion follows from (2) above, since $\sfF_i.X^{[A]}=0=\sfE_i.X^{[A]}$;
 see Corollary \ref{poly1} and Lemma \ref{poly2}.
Assume now $m>0$. By the commutation formula
$$\sfE_i^{(a)}\sfF_i^{(b)}=\sum_{t=0}^{\text{min}(a,b)}\sfF^{(b-t)}\left[{\widetilde \sfK_t;2t-a-b\atop t}\right]\sfE^{(a-t)},$$
we can easily deduce that $\sfE_i^{(m)}\sfF_i^{(m)}.X^{[A]}=X^{[A]}$ and $\sfE_i^{(m-1)}\sfF_i^{(m)}.X^{[A]}=\sfF_i.X^{[A]}$, which imply, by (2)\&(1),
$$\aligned
\sfE_i^{(m)}\sfK_{\bar m}\sfF_i^{(m)}.X^{[A]}&=(\sfE_i\sfK_{\bar i}\sfE_i^{(m-1)}-[m-1]\sfK_{\bar i}\sfE_i^{(m)})(\sfF_i^{(m)}.X^{[A]})\\
&=\sfE_i\sfK_{\bar i}\sfE_i.X^{[A]}-[m-1]\sfK_{\bar i}.X^{[A]}\\
&=\sfK_{\ol{i+1}}.X^{[A]}.
\endaligned
$$
Since the weight of $\sfF_i^{(m)}.X^{[A]}$ has zero at the $i$th component, so
$\sfK_{\ol{m}}\sfF_i^{(m)}.X^{[A]}=0$ by induction. Hence, $\sfK_{\ol{i+1}}.X^{[A]}=0$.
 \end{proof}
 
 Let $\mathcal Q_{\q}(n,r)$ be the queer $q$-Schur superalgebra $(q=\up^2)$ introduced in \cite{DW1}.
 We now prove that the $\qUq$-supermodule $\sTnr$ is isomorphic to the regular representation of $\mathcal Q_{\q}(n,r)$.
 
\begin{theorem}\label{reaque}
Maintain the notation above and let 
$$I_r=\text{\rm ann}_{\qUq}(\sTnr)=\{u\in\qUq\mid u.\sTnr=0\}.$$ Then there is a superalgebra isomorphism
\begin{equation}\label{queiso}
\mathcal Q_{\q}(n,r)\cong\qUq/I_r.
\end{equation}
Moreover, $\sTnr$, regarded as a $\mathcal Q_{\q}(n,r)$-supermodule, is isomorphic to the regular representation $_{\mathcal Q_{\q}(n,r)}\mathcal Q_{\q}(n,r)$ of $\mathcal Q_{\q}(n,r)$.
\end{theorem}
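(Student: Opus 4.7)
The plan is to combine the cyclicity of $\sTnr$ established in Lemma \ref{qqsdec} with the known dimension and basis properties of the queer $q$-Schur superalgebra from \cite{DW1, DW2}.

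First, I would show that $\sTnr$ is naturally a $\mathcal{Q}_\q(n,r)$-supermodule. Recall that $\mathcal{Q}_\q(n,r)$ is defined in \cite{DW1, DW2} as the image of $\qUq$ in the endomorphism algebra of the Schur--Weyl--Sergeev tensor space $V^{\otimes r}$, where $V$ is the natural $\qUq$-supermodule; equivalently $\mathcal{Q}_\q(n,r) = \qUq/J_r$ where $J_r$ is the annihilator of $V^{\otimes r}$. Since the queer polynomial superalgebra $\SW$ is the $\qUq$-quotient of the tensor algebra $T(V)$ by the relations in \eqref{suppol} (which are preserved by the $\qUq$-action by Theorem \ref{supalg}), its homogeneous component $\SW_k$ is a $\qUq$-quotient of $V^{\otimes k}$. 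Therefore $\sTnr = \bigoplus_{r_1+\cdots+r_n = r} \SW_{r_1}\otimes\cdots\otimes\SW_{r_n}$ is a $\qUq$-quotient of a direct sum of copies of $V^{\otimes r}$, so $J_r\subseteq I_r$, which yields a surjective superalgebra homomorphism $\pi\colon\mathcal{Q}_\q(n,r)\twoheadrightarrow\qUq/I_r$.

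Next, I would exploit cyclicity and compare dimensions to identify $\sTnr$ with the regular representation. By Lemma \ref{qqsdec}, $\sTnr=\qUq\cdot\mathbf{1}_r$ is cyclic over $\qUq$, and this cyclicity descends through $\pi$ to a surjective $\mathcal{Q}_\q(n,r)$-supermodule homomorphism $\psi\colon\mathcal{Q}_\q(n,r)\to\sTnr$, $x\mapsto x.\mathbf{1}_r$. The basis $\{X^{[A]}\mid A\in M_n(\N|\Z_2)_r\}$ gives $\dim\sTnr = |M_n(\N|\Z_2)_r|$, which equals $\dim\mathcal{Q}_\q(n,r)$ by the matrix-indexed basis constructed in \cite{DW2}. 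Hence $\psi$ is a bijection, so $\sTnr$ is isomorphic to the left regular representation ${}_{\mathcal{Q}_\q(n,r)}\mathcal{Q}_\q(n,r)$.

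Finally, \eqref{queiso} follows from a dimension chase. The induced map $\qUq/I_r\to\sTnr$, $[u]\mapsto u.\mathbf{1}_r$, is surjective by cyclicity, so $\dim(\qUq/I_r)\geq\dim\sTnr = \dim\mathcal{Q}_\q(n,r)$; combining with the surjection $\pi$ gives $\dim\mathcal{Q}_\q(n,r)\geq\dim(\qUq/I_r)$. All three dimensions are equal, so $\pi$ is an isomorphism, proving \eqref{queiso}.

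The main obstacle will be making precise the identification of $\sTnr$ as a $\qUq$-quotient of $V^{\otimes r}$: although the relations \eqref{suppol} cut out $\SW$ from $T(V)$, I should verify that these relations are $\qUq$-stable by unpacking the action of Theorem \ref{supalg} on the generators $X_{\bar i}X_{\bar j}+X_{\bar j}X_{\bar i}$ and $X_{\bar i}^{2}-\tfrac{\q-\q^{-1}}{\q+\q^{-1}}X_i^{2}$. Should this direct route encounter difficulties, an alternative is to invoke the quantum Schur--Weyl--Sergeev duality at the tensor-space level from \cite{DW1, DW2} to obtain a surjection $\mathcal{Q}_\q(n,r)\to\qUq/I_r$ more abstractly, after which the cyclicity-plus-dimension argument above still closes the proof.
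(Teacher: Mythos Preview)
Your overall architecture---cyclicity of $\sTnr$ from Lemma~\ref{qqsdec}, a surjection $\mathcal Q_\q(n,r)\twoheadrightarrow\qUq/I_r$, and then a dimension count using $|M_n(\bN|\bZ_2)_r|=\dim\mathcal Q_\q(n,r)$---is exactly the skeleton of the paper's proof. The difficulty lies precisely where you flagged it: obtaining the inclusion $J_r\subseteq I_r$, i.e.\ showing that $\sTnr$ is a $\mathcal Q_\q(n,r)$-module.

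Your proposed route, realising $\SW_k$ as a $\qUq$-quotient of $V^{\otimes k}$ via the multiplication map, does not work. The ideal generated by the relations \eqref{suppol} in $T(V)$ is \emph{not} $\qUq$-stable. Already for the even commutators one computes, using $\Delta(\e_h)=\e_h\otimes\widetilde{\k}_h^{-1}+1\otimes\e_h$ and $\widetilde{\k}_h^{-1}v_h=\q^{-1}v_h$, that
\[
\e_h.(v_h\otimes v_{h+1}-v_{h+1}\otimes v_h)=v_h\otimes v_h-\q^{-1}\,v_h\otimes v_h=(1-\q^{-1})\,v_h\otimes v_h,
\]
which is a nonzero multiple of a symmetric tensor and hence not in the commutator ideal. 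So the naive surjection $V^{\otimes k}\to\SW_k$ is not $\qUq$-equivariant, and Theorem~\ref{supalg} cannot help here: it defines the action directly on $\SW$, where the relations already hold, and says nothing about stability of the relation ideal inside $T(V)$. Your fallback (``invoke Schur--Weyl--Sergeev duality more abstractly'') is circular, since producing a surjection $\mathcal Q_\q(n,r)\to\qUq/I_r$ is exactly the statement $J_r\subseteq I_r$.

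The paper closes this gap differently. It uses the explicit presentation $\mathcal Q_\q(n,r)\cong\qUq/J_r$ from \cite[Theorem~9.2]{DW1}, where $J_r$ is generated by $\sfK_1\cdots\sfK_n-\q^r$, $\prod_{s=0}^r(\sfK_i-\q^s)$, and $\sfK_{\bar i}\prod_{s=1}^r(\sfK_i-\q^s)$. The first two generators kill $\sTnr$ by Corollary~\ref{poly1}. For the odd generators one needs that $\sfK_{\bar i}.X^{[A]}=0$ whenever the $i$th component of $\ro(A)$ vanishes; this is Lemma~\ref{wan}(3), proved by an induction using the identity in Lemma~\ref{wan}(2) expressing $\sfK_{\ol{i+1}}$ in terms of $\sfK_{\bar i}$, $\sfE_i$, $\sfF_i$, $\sfF_{\bar i}$. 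Once $J_r\subseteq I_r$ is established this way, your dimension argument finishes the proof verbatim.
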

\begin{proof}Consider the $\qUq$-module homomorphism
$$\tilde\rho_r:\qUq\longrightarrow \sTnr, u\longmapsto u.{\bf1}_r.$$
By Lemma \ref{qqsdec}, this homomorphism is surjective.
Clearly, $I_r\subseteq\text{ker}(\rho)$. Thus, this homomorphism induces an epimorphism
$$\rho_r:\qUq/I_r\longrightarrow \sTnr, u\longmapsto u.{\bf1}_r.$$
On the other hand,
by \cite[Theorem 9.2]{DW1},
$U_{\q}(\mfq)/J_r \cong \mc Q_{\q}(n,r),$
where   $J_r$  is the ideal generated by the elements:
 $$\sfK_1\cdots \sfK_n-\q^r,\;\;(\sfK_i-1)(\sfK_i-\q )\cdots(\sfK_i-\q^r),\,\,\sfK_{\bi}(\sfK_i-\q)\cdots(\sfK_i-\q^r),  \;1\leq i\leq n.$$
 Clearly, the first two generators are in $I$ and so is the third by Lemma \ref{wan}(3). So $J_r\subseteq I_r$ and $\rho_r$ induces an epimorphism.
  $$\ol{\rho}_r:\mathcal Q_\up(n,r) \longrightarrow \sTnr, u\longmapsto u.{\bf1}_r.$$
 dimensional comparison forces that $\ol\rho_r$ must be an isomorphism.
\end{proof}

Let $\pi_r:\qUq\to\mathcal Q_\up(n,r)$ be the quotient morphism. 
The proof above implies the following.
Recall the notation introduced in \eqref{Kla}.
\begin{corollary}The image under $\pi_r$ of the set 
$\bigg\{\fkm^{A,{\bf0}}\begin{bmatrix}\sfK\\\co(A)\end{bmatrix}\mid A\in M_n(\N|\Z_2)_r\bigg\}$
forms a basis for $\mc Q_{\q}(n,r)$.
\end{corollary}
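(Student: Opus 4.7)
The plan is to transport the problem across the vector space isomorphism $\overline{\rho}_r\colon\mathcal Q_\up(n,r)\to\sTnr$, $u\mapsto u.\mathbf 1_r$, given in Theorem~\ref{reaque}. Under this map, the statement reduces to showing that the family
$$\left\{\fkm^{A,\mathbf 0}\begin{bmatrix}\sfK\\\co(A)\end{bmatrix}.\mathbf 1_r\ \bigg|\ A\in M_n(\N|\Z_2)_r\right\}$$
is a basis of $\sTnr$. A cardinality check will cap the argument: we already know $\dim\mc Q_\up(n,r)=\dim\sTnr=|M_n(\N|\Z_2)_r|$, so it suffices to establish linear independence.

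First, I observe that the monomial $\fkm^{A,\mathbf 0}$ defined in \eqref{MonBs0}--\eqref{MonBs} depends only on the off-diagonal entries of $A^0$ and on $A^1$; the diagonal entries $a_{i,i}^0$ do not appear anywhere in $\fkE_{j-1}^0$, $\fkF_j^0$, or $\fkF^1_{i,j}$. Consequently $\fkm^{A,\mathbf 0}=\fkm^{A',\mathbf 0}$, where $A'\in M_n(\N|\Z_2)'_r$ is obtained from $A$ by zeroing the diagonal of $A^0$. Next, the computation at the end of the proof of Lemma~\ref{qqsdec} shows
$$\begin{bmatrix}\sfK\\\co(A)\end{bmatrix}.\mathbf 1_r\ =\ \prod_{i=1}^n\begin{bmatrix}\sfK_i\\\co(A)_i\end{bmatrix}.\mathbf 1_r\ =\ X^{[\diag(\co(A))]}.$$
Combining these two observations,
$$\fkm^{A,\mathbf 0}\begin{bmatrix}\sfK\\\co(A)\end{bmatrix}.\mathbf 1_r\ =\ \fkm^{A',\mathbf 0}.X^{[\diag(\co(A))]}.$$

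Now I would invoke the triangular relation \eqref{SchurTri}, which was established (for any $A\in M_n(\N|\Z_2)_r$) in the proof of Lemma~\ref{qqsdec}:
$$\fkm^{A',\mathbf 0}.X^{[\diag(\co(A))]}\ =\ g_A\,X^{[A]}+(\text{lower terms})_{\preceq_{\co}},\qquad g_A\in\pm\up^{\Z}.$$
Because lower terms under $\preceq_{\co}$ have the same column-sum $\co(A)$, the whole expression lies in the weight subspace $\sTnr^{\co(A)}$. Splitting the family by the value of $\co(A)\in\La(n,r)$ therefore yields, for each fixed $\la$, a family indexed by $\{A\in M_n(\N|\Z_2)_r\mid\co(A)=\la\}$ whose transition matrix to the basis $\{X^{[A]}\mid\co(A)=\la\}$ of $\sTnr^\la$ is triangular with respect to $\preceq$ and has nonzero diagonal entries $g_A\in\pm\up^\Z$. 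This matrix is therefore invertible; assembling the blocks over $\la\in\La(n,r)$ via the direct sum decomposition $\sTnr=\bigoplus_\la\sTnr^\la$ yields a basis of $\sTnr$. Pulling back by $\overline{\rho}_r$ completes the proof.

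I do not expect any genuine obstacle here: the substantive work has already been done in equation \eqref{SchurTri} of Lemma~\ref{qqsdec} and in Theorem~\ref{reaque}. The only care needed is to match conventions, namely to note that $\fkm^{A,\mathbf 0}=\fkm^{A',\mathbf 0}$ so that the triangular identity applies verbatim, and to use the compatibility with the weight decomposition so that the triangularity relative to $\preceq_{\co}$ (rather than $\preceq$) suffices.
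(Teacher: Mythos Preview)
Your proof is correct and follows essentially the same approach as the paper: transport the problem through the isomorphism $\overline{\rho}_r$ of Theorem~\ref{reaque}, then invoke the triangular relation \eqref{SchurTri} from Lemma~\ref{qqsdec} to conclude that the images form a basis of $\sTnr$. You have in fact filled in two details the paper leaves implicit, namely the identification $\fkm^{A,\mathbf 0}=\fkm^{A',\mathbf 0}$ and the block-by-block triangularity argument over the decomposition $\sTnr=\bigoplus_{\la}\sTnr^\la$.
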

\begin{proof}By the proof of Lemma \ref{qqsdec}, the set $\{\fkm^{A',{\bf0}}\left[{\sfK\atop\co(A)}\right]{\bf 1}_r\mid A\in M_n(\N|\Z_2)_r\}$ forms a basis for $\sTnr$. Now the assertion follows from the relation
$$\ol\rho_r\bigg(\Big(\pi_r\fkm^{A',{\bf0}}\left[{\sfK\atop\co(A)}\right]\Big)\bigg)=\fkm^{A',{\bf0}}\left[{\sfK\atop\co(A)}\right]{\bf 1}_r.$$
\end{proof}
For any $A\in  M_n(\bN|\bZ_2)_r$, let $\psi_A$ be the unique element in $\mathcal Q_\up(n,r)$ such that $\psi_A.{\bf 1}_r=X^{[A]}$. In other words, $\psi_A=\ol{\rho}_r^{-1}(X^{[A]})$. We now use the regular representation of $\mathcal Q_\up(n,r)$ to get a new presentation for $\mathcal Q_\up(n,r)$.

\begin{theorem}\label{Qnr}The queer $q$-Schur superalgebra $\mc Q_{\q}(n,r)$ has a basis
$$\{\psi_A\mid A\in  M_n(\bN|\bZ_2)_r,$$ and its generators
$$e_h:=\pi_r(\sfE_h),\;\; f_h:=\pi_r(\sfF_h), \;\;k_i:=\pi_r(\sfK_i),\;\; k_{\bar 1}:=\pi_r(\sfK_{\bar 1})\;\;(1\leq h<n,1\leq i\leq n)$$
have the following matrix representations relative to the basis:
\begin{enumerate} 
\item \quad\;\;$k_i\psi_{A} =\q^{\sum_{1\leq j\leq n}(a_{i,j}^0+a_{i,j}^1) }\psi_{A}$;
\item \quad\\\vspace{-4ex}
$$\aligned
e_\i\psi_{A}
=&\sum_{1\leq j\leq n;\, a_{\i+1,j}^0\neq 0} \q^{\sigma_\sce^+(\i,j,A)}
    [a_{\i ,j}^0+1]\psi_{(A^0+E_{\i,j}-E_{\i+1,j}|A^1)}\\
  &+\sum_{1\leq j\leq n;\, a_{\i+1,j}^1\neq 0} \q^{\sigma_\sce^-(\i,j,A)}
    [ a_{\i, j}^1+1]\psi_{(A^0|A^1+E_{\i,j}-E_{\i+1,j})}.
\endaligned$$
\item \quad\\\vspace{-5ex}
$$
\aligned
f_\i\psi_{A}=&\sum_{1\leq j\leq n;\, a_{\i,j}^0\neq 0} \q^{\sigma_\scf^+(\i,j,A)}
    [a_{\i+1,j}^0+1]\psi_{(A^0-E_{\i,j}+E_{\i+1,j}|A^1)}\\
  &+\sum_{1\leq j\leq n;\, a_{\i,j}^1\neq 0} \q^{\sigma_\sce^-(\i,j,A)}
    [ a_{\i+1,j}^1+1]\psi_{(A^0|A^1-E_{\i,j}+E_{\i+1,j})}.
\endaligned$$

\item\quad\\\vspace{-5ex}
$$\aligned
\quad k_{\bar 1}\psi_{A}
=&\sum_{1\leq j\leq n;\, a_{1,j}^1\neq 0}
                  (-1)^{\fks_j(A^1)} \q^{\sigma_\sck^+(\bar1,j,A) } [ a_{1,j}^0+1] \psi_{(A^0+E_{1,j}|A^1-E_{1,j})}\\
 &+ \sum_{1\leq j\leq n;\, a_{1,j}^0\neq 0}(-1)^{\fks_j(A^1)}
     \q^{\sigma_\sck^-(\bar1,j,A) } [a_{1,j}^1+1]\psi_{(A^0-E_{1,j}|A^1+E_{1,j})}.
\endaligned$$
\end{enumerate}
Here, for a matrix $B=(B^0|B^1)$ with $b_{i,j}^1=2$, $\psi_B=\frac{\up-\up^{-1}} {\up+\up^{-1}} 
\psi_{(B^0+2E_{i,j}|B^1-2E_{i,j})}$.
\end{theorem}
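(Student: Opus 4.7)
\textbf{Proof plan for Theorem \ref{Qnr}.} The entire statement will be proved by transporting structure along the $\mathcal Q_\up(n,r)$-module isomorphism $\bar\rho_r\colon\mathcal Q_\up(n,r)\longrightarrow\sTnr$, $u\mapsto u.\mathbf 1_r$, established in Theorem \ref{reaque}. First I will observe that $\{X^{[A]}\mid A\in M_n(\N|\Z_2)_r\}$ is a $\mathbb Q(\up)$-basis of $\sTnr$ by \eqref{X-bases}--\eqref{sTnr}. Since $\bar\rho_r$ is a linear isomorphism, the preimages $\psi_A:=\bar\rho_r^{-1}(X^{[A]})$ automatically form a basis of $\mathcal Q_\up(n,r)$, which is the first claim.

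For the matrix representations, I will use that $\bar\rho_r$ is an isomorphism of \emph{left} $\mathcal Q_\up(n,r)$-modules; equivalently, by the very definition of $I_r$, the action of $\qUq$ on $\sTnr$ factors through $\pi_r$. Thus for each generator $g\in\{\sfE_h,\sfF_h,\sfK_i,\sfK_{\bar 1}\}$ and each $A$,
$$\bar\rho_r\bigl(\pi_r(g)\cdot\psi_A\bigr)=\pi_r(g).\bar\rho_r(\psi_A)=g.X^{[A]}.$$
The right-hand side has already been computed explicitly in Lemma \ref{genmul} as a linear combination of symbols $X^{[B]}$ with the coefficients and matrices listed there. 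Applying $\bar\rho_r^{-1}$ termwise translates each such identity in $\sTnr$ into the corresponding identity in $\mathcal Q_\up(n,r)$ on the basis $\{\psi_A\}$. Combined with the tautology $\pi_r(\sfE_h)=e_h$, $\pi_r(\sfF_h)=f_h$, $\pi_r(\sfK_i)=k_i$, $\pi_r(\sfK_{\bar 1})=k_{\bar 1}$, this yields formulas (1)--(4).

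The only subtle point is the occurrence of matrices $B$ whose $(i,j)^1$-entry equals $2$ (arising from the summands $[a^1_{h,j}+1]X^{[\dots]}$ in Lemma \ref{genmul} when $a^1_{h,j}=1$); such $B$ are not in $M_n(\N|\Z_2)_r$, so $X^{[B]}$ is not a defining basis vector. Applying the relation $X_{\bar i}^{2}=\frac{\up-\up^{-1}}{\up+\up^{-1}}X_i^{2}$ inside $\SW$ rewrites $X^{[B]}$ as a scalar multiple of a genuine basis element $X^{[B^0+2E_{i,j}|B^1-2E_{i,j}]}$; transporting through $\bar\rho_r^{-1}$ forces the convention $\psi_B=\tfrac{\up-\up^{-1}}{\up+\up^{-1}}\psi_{(B^0+2E_{i,j}|B^1-2E_{i,j})}$ stated at the end of the theorem, and with this convention the displayed formulas become well-defined identities among basis elements of $\mathcal Q_\up(n,r)$.

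The proof is therefore essentially formal, since all nontrivial content — that $\bar\rho_r$ is an isomorphism and the explicit action formulas of the generators on $\sTnr$ — has already been established in Theorem \ref{reaque} and Lemma \ref{genmul}. The only step that requires a brief verification is the reduction coefficient for $b^1_{i,j}=2$, which is obtained by combining the divided-power identity $X_i^{[a]}X_i^{[2]}={a+2\brack 2}X_i^{[a+2]}$ with the defining relation \eqref{suppol}(2); this is precisely the content already recorded in Lemma \ref{close}, and I do not anticipate any further obstacle.
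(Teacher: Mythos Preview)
Your proposal is correct and follows essentially the same route as the paper: the paper's proof simply cites Lemma \ref{genmul} together with \cite[Lemma 5.1]{DZ}, the latter being precisely the transport-of-structure principle you spell out explicitly (the regular-representation isomorphism $\bar\rho_r$ of Theorem \ref{reaque} carries the action formulas on $X^{[A]}$ to multiplication formulas on $\psi_A$). Your handling of the $b_{i,j}^1=2$ case via the relation $X_{\bar i}^2=\tfrac{\up-\up^{-1}}{\up+\up^{-1}}X_i^2$ and Lemma \ref{close} is also in line with the paper's treatment.
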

\begin{proof}
This follows immediately from Lemma \ref{genmul} and \cite[Lemma 5.1]{DZ}.
\end{proof}
\begin{remarks} (1)
As given in \cite[(9.5),(9.8)]{DW1}, the queer $q$-Schur superalgebra is the endomorphism algebra of the $r$-fold tensor superspace of the natural representation of $\qUq$ over the Hecke-Clifford algebra. It is natural to expect that the basis element $\psi_{(A^0|A^1)}$ should agree with the linear map $\phi_{(A^0|A^1)}$ (up to a signed power of $\up$). This identification is crucial to lifting the Schur--Weyl--Sergeev duality to the integral level.

(2) The integral theory developed in the paper has set down some foundation for establishing the theory of polynomial super representations of $\qUq$ in positive quantum characteristics.
\end{remarks}

%

\end{document}